\documentclass[12pt,twoside]{amsart}
\usepackage{latexsym,amsmath,amsopn,amssymb,amsthm,amsfonts}
\usepackage[T2A]{fontenc}
\usepackage[cp1251]{inputenc}
\usepackage[english]{babel}
\usepackage{graphicx}

\textwidth 150mm \oddsidemargin=0.5cm \topmargin=-1cm
\textheight 220mm \evensidemargin=0.5cm

\newtheorem{theorem}{Theorem}

\newtheorem{proposition}{Proposition}

\newtheorem{remark}{Remark}

\newtheorem{definition}{Definition}

\newtheorem{example}{Example}

\newtheorem{corollary}{Corollary}

\newtheorem{question}{Question}

\DeclareMathOperator{\Ad}{Ad}

\DeclareMathOperator{\ad}{ad}

\DeclareMathOperator{\SO}{SO}

\DeclareMathOperator{\SU}{SU}

\DeclareMathOperator{\GL}{GL}

\DeclareMathOperator{\sgn}{sign}

\DeclareMathOperator{\SL}{SL}

\DeclareMathOperator{\M}{M}

\DeclareMathOperator{\const}{const}

\DeclareMathOperator{\Inn}{Inn}

\DeclareMathOperator{\pr}{pr}

\begin{document}
	
\vspace{0.5cm}
\title[Sub-Lorentzian geodesics on $\GL^+(2,\mathbb{C})$]{Sub-Lorentzian geodesics on $\GL^{+}(2,\mathbb{C})$ with the generating space of Hermitian matrices in the Lie algebra $\mathfrak{gl}^{+}(2,\mathbb{C})$}
\author{V.~N.~Berestovskii, I.~A.~Zubareva}
\thanks{The work of the first author was carried out within the framework of the State Contract to the IM SB RAS, project FWNF-2022-0006.
The work of the second author was carried out within the framework of the State Contract to the IM SB RAS, project FWNF--2022--0003.}
\address{Sobolev Institute of Mathematics of the SB RAS,\newline
4 Koptyug Av., Novosibirsk, 630090, Russia}
\email{valeraberestovskii@gmail.com}
\address{Sobolev Institute of Mathematics of the SB RAS,\newline
13 Pevtsova st., Omsk, 644099, Russia}
\email{i\_gribanova@mail.ru}

\begin{abstract}
The Lie subgroup $\GL^+(2,\mathbb{C})$ of all matrices in the Lie group $\GL(2,\mathbb{C})$ 
with positive real determinant is equipped with a left-invariant
sub-Lorentzian (anti)metric defined by the natural structure of the 4-dimensional Minkowski space-time on the subspace of Hermitian matrices in its Lie algebra. 
On base of the corresponding time--anti--optimal control problem, formulated in the paper, and Pontryagin minimum principle for it, using
geodesics and shortest arcs of the corresponding left-invariant sub-Riemannian metric on the Lie subgroup $\SL(2,\mathbb{C})$, the authors found sub-Lorentzian
nonspacelike geodesics and longest arcs.
\vspace{2mm}

\noindent {\it Keywords and phrases:}   Hermitian matrix, Pauli matrices, Pontryagin minimum principle, 
Riemannian symmetric space, sub-Lorentzian (ab)normal extremal, sub-Lorentzian geodesic, sub-Lorentzian longest arc, time--anti--optimal problem.
\vspace{1mm}

\noindent MSC 2020: 53С17, 49J15, 53C50, 53C35, 53C30, 22E43.
\end{abstract}
\maketitle

\begin{flushright}
Dedicated to the 90-th anniversary of Anatoly Moiseevich Vershik
	
\end{flushright}

\section{Preface}

The first author highly appreciates all communications with Anatoly Moiseevich.

They include the first correspondence acquaintance through his publications on sub-Riemannian manifolds, especially the joint
survey \cite{VerGer87} with V.Ya.Gershkovich, my talk at Leningrad on my thesis \cite{Ber90}, my defence of this dissertation in 
1990 at Novosibirsk, when Anatoly Moiseevich was its opponent, the work for our joint paper \cite{BerVer92}, and meetings in May 2004 at 
Max-Planck-Institute f\"ur Mathematik, Bonn.  

Really all the text \cite{Ber90} was printed by its author with old-fashioned typewriter yet in 1987. It includes a general theory of locally
compact homogeneous spaces $M$ with intrinsic metric $\rho$. Any space $(M,\rho)$ is {\it geodesic} in Gromov sense, i.e. every its pair
of points can be joined by a shortest path (curve). This follows from Cohn-Vossen theorem which states that any two points of a locally compact
complete intrinsic metric space can be joined by a shortest path \cite{CV36}. If $(M,\rho)$ is infinite-dimensional, then it is an 
inverse Gromov-Hausdorff limit of a sequence of homogeneous manifolds with invariant intrinsic metrics (connected by submetries \cite{BerGui00}),
while the latter ones are homogeneous spaces with invariant (sub-)Finslerian metric. The space $(M,\rho)$ admits a metric similarity with a coefficient
$\alpha\neq 1$ onto itself if and only if $(M,\rho)$ is isometric either to a finite-dimensional normed vector space or to a simply connected graded nilponent Lie group, the so-called {\it Carnot group} with a special left-invariant sub-Finslerian metric \cite{Ber90, BerVer92}. E.~Le Donne published later a proof of this result in \cite{Led15}.

Let us indicate as examples of the Carnot groups the following ones: 3-dimensional Heisenberg group, 4-dimensional Engel group, and 5-dimensional Cartan group.

On the other hand, there is no real possibility for a general theory of non-locally-compact separable or non-separable homogeneous spaces with intrinsic metrics because of immense amount of such spaces. Between them are the famous geodesic and complete universal separable Urysohn's space $U$ \cite{Ur27}, \cite{Ur51} and complete geodesic non-separable homogeneous $\mathbb{R}$-tree $RT$ with the valency continuum $\mathfrak{c}$ at any point. The space $RT$ has infinite Hausdorff dimension, the covering (Lebesgue) dimension 1, and the curvature $\leq K$ for every $K\in\mathbb{R}$ in the sense of A.D.~Aleksandrov. Both spaces
$U,$ $RT$ admit metric similarities onto themselves with any coefficient $\alpha\in\mathbb{R}_+.$ The space $RT$ is an universal space for the $\mathbb{R}$-trees
of valency $\leq\mathfrak{c}.$ 

Surprisingly, P.S.~Urysohn constructed in \cite{Ur27a}, \cite{Ur51a} possibly the very first example of a rather nontrivial $\mathbb{R}$-tree $\mathcal{R}$.
The space $\mathcal{R}$ has {\it all properties of $RT$} ($\mathcal{R}$ can be interpreted as a group with left-invariant metric) {\it besides the completeness}. If $\overline{\mathcal{R}}$ is the completion of $\mathcal{R},$ then $\overline{\mathcal{R}}\setminus\mathcal{R}$ is everywhere dence subset in $\overline{\mathcal{R}}$ and $\overline{\mathcal{R}}$ is a $\mathbb{R}$-tree. In fact, Berestovskii in 1989, as well as I.V.~Polterovich and A.I.~Shnirelman in 1997, constructed the $\mathbb{R}$-trees which are isometric to $\mathcal{R},$ unaware then with Urysohn's example from \cite{Ur27a}, \cite{Ur51a}, although the note \cite{Ur51a} is placed immediately after \cite{Ur51}. Even more surprisingly that paper \cite{DP98}, containing the construction of $RT,$ (as well as paper \cite{Ver98} by A.M.~Vershik on the space $U$ and Gromov metric triples) is published in the issue of Russian Math. Surveys dedicated to 100 years of Urysohn birthday, and in this issue there is no mention on the important sense of notes \cite{Ur27a}, \cite{Ur51a}. All the above and other information on $\mathbb{R}$-trees with many references is given in \cite{Ber19}.

The Urysohn space $U$ was used in \cite{BerVer92} for a definition of the Gromov-Hausdorff distance between separable metric spaces. Paper \cite{BerVer92} was cited
in \cite{Ver98}, \cite{Gro96}, \cite{Semmes01}. In very interesting paper \cite{Ver04}, A.M.~Vershik introduced the notion of random metric space and proved that such a space is isometric to the Urysohn universal metric space $U$ with probability one. He also quoted there many others interesting and important connected results obtained  by him and other authors.   

\section{Introduction} 

The Lie group $\GL^+(2,\mathbb{C})$ is a subgroup of the Lie group $\GL(2,\mathbb{C})$ of all non-degenerate complex $(2\times 2)$--matrices, consisting of the matrices with positive real determinant. The Lie group  $\GL^+(2,\mathbb{C})$ is $7$--dimensional and includes the $6$--di\-men\-si\-on\-al Lie group $\SL(2,\mathbb{C})$ of all unimodular complex $(2\times 2)$--matrices.

In Theorem 2 from \cite{Ber23}, the well-known two-sheeted universal covering--epimorphism $l:\SL(2,\mathbb{C})\rightarrow \SO_0(1,3)$ with the kernel 
$\{I,-I\}\in \SU(2)$ onto the special orthochronic Lorentz group $\SO_0(1,3)$ is constructed. 
$\SO_0(1,3)$ is the unit connected component of the complete group of linear Lorentz transformations of the
Minkowski space-time $M_0.$ This is consistent with the fact that the noncompact symmetric Riemannian space $\SL(2,\mathbb{C})/\SU(2)=\SO_0(1,3)/\SO(3)$
of type IV \cite{Hel} is a 3-dimensional Lobachevsky space $L^3,$ since one of orbits of the group $\SO_0(1,3)$ is the upper sheet of the hyperboloid of two sheets
in $M_0$ with pseudoscalar product $\langle\cdot,\cdot\rangle$ of the  signature $(-,+,+,+),$ which is a model of the space $L^3.$

In the preface to book \cite{RumFet77}, it is noted that replacing the Lorentz group with a two-sheeted covering $\SL(2,\mathbb{C})$ leads to simplifications in  spinor algebra. The extension of $l$ to $\GL^+(2,\mathbb{C})$ is an epimorphism with the same kernel of the group $\GL^+(2,\mathbb{C})$ to the transitive unit connected component of the  group of all conformal ($\cong$ causal) transformations of the open future cone $C_0\subset M_0$ \cite{Ber23}, \cite{PanSeg82}.

It is interesting that the 4-dimensional linear subspace $H$ of all Hermitian matrices in the Lie algebra $\mathfrak{gl}^+(2,\mathbb{C})$ of the Lie group
$\GL^+(2,\mathbb{C})$ generates $\mathfrak{gl}^+(2,\mathbb{C})$. The subspace $H$ equipped with the quadratic form $4\det h,$ $h\in H,$ of the signature
$(+,-,-,-)$, is isometric to the space-time $M_0.$ This, together with a time orientation, defines a left-invariant sub-Lorentzian structure on the Lie group
$\GL^+(2,\mathbb{C})$ from Remark 5 in \cite{Ber23}. In this paper, we study geodesics, i.e. locally longest arcs, of this sub-Lorentzian structure and properties of such geodesics.

{\it A left-invariant (sub-)Lorentzian structure} on a connected Lie group $G$ is given by some pseudoscalar product $\langle\cdot,\cdot\rangle$ with the signature $(+,-,\dots,-)$ on the Lie algebra $( \mathfrak{g},[\cdot,\cdot])$ of the Lie group $G$, inducing
 a pseudoscalar product with similar signature on the subspace $\mathfrak{p}\subset \mathfrak{g}$ generating $\mathfrak{g}$.
The definition of such structure is completed by choosing a timelike vector $v\in\mathfrak{p}$ with the condition $\langle v, v\rangle =1,$ defining {\it time orientation} on $G.$
 
Every left-invariant (sub-)Lorentzian structure on a Lie group $G$ induces a {\it left-invariant (sub-)Lorentzian (anti)metric} $d$ on $G$.

Here $d(x,y)$ is the supremum of lengths of piecewise continuously differentiable {\it  future directed timelike horizontal paths} $g=g(t),$ $0\leq t \leq a,$ joining $x,y$, i.e., the paths with conditions
\begin{equation}
\label{cond} 
g(0)=x,\, g(a)=y,\, g'(t)=dl_{g(t)}(u(t)),\, u(t)\in\mathfrak{p},\, \langle u(t),v\rangle>0,\, \langle u(t),u(t)\rangle >0,
\end{equation}
where $dl_{g(t)}$ is the differential of the left shift $l_{g(t)}: h\in G\rightarrow g(t)h;$ the length of each such path is determined by the standard formula $L(g)=\int_0^a\sqrt{\langle u(t),u(t)\rangle}\ dt.$

Unlike the metric, $d$ satisfies the inequality $$d(x,z)\geq d(x,y)+d(y,z),$$ which is opposite to the triangle inequality;
in general case, the equalities $d(x,y)=-\infty$ and $d(x,y)=+\infty$ are possible.
Those cases where the latter equality is impossible, are of particular interest; these cases
exclude the existence of so-called future directed timelike loops.

The (sub-)Lorentzian (anti)metric on the Lie group $G$ with the condition $d<+\infty$ 
is a very special case of the so-called {\it intrinsic (anti)metric} on locally compact topological groups (which may also have infinite topological dimension), defined by the axiomatic AM, equivalent to the other two axiomatics MO (of the {\it metrized order}) and SG (of the {\it subgraph-semigroup}), \cite{BerGich99}. Many examples of metrized orders are presented in \cite{BerGich01}.
Here the fundamental role is played by the concept of a semigroup, including a semigroup-family of
subsets on a topological group.

The emerging search problem for timelike future directed {\it longest arcs} of a left-invariant intrinsic (anti)metric on a connected Lie group $G$
in some sense is dual to the search problem for shortest paths of a left-invariant intrinsic metric on $G$. The last mentioned problem is a {\it left-invariant time-optimal problem with a control region}, which is some compact convex centrally symmetric body $W\subset \mathfrak{p},$ where $\mathfrak{p}$ is a vector subspace of the Lie algebra $ (\mathfrak{g},[\cdot,\cdot]),$ generating $\mathfrak{g}$ by the operation $[\cdot,\cdot]$.

In Theorems 11 and 12 from \cite{BerGich01}, the necessary conditions were formulated for the search for the so-called normal shortest arcs (timelike
longest arcs) of a left-invariant intrinsic metric ((anti)metric) on a Lie group $G.$
Theorem 11 is proved in Theorem 7 in \cite{BerZub}. Let us formulate Theorem 12.

\begin{theorem}
\label{bergich}

1. Each parametrized by arclength timelike longest arc $g(t),$ $0\leq t\leq a,$ of left-invariant 
inner (anti)metric $\tau$ on a Lie group $G$ with the Lie algebra $\mathfrak{g}$, defined by an antinorm $\nu$
on the subspace $\mathfrak{p}\subset \mathfrak{g}$ with closed unit ball $U_1$, is a Lipschitzian
time-anti-optimal trajectory of the control system $dl_{g(t)^{-1}}(g'(t))=u(t)\in U_1.$

2. (Left-invariant Pontryagin minimum principle). 
Every such longest arc is a Lipschitzian integral curve of (=tangent to for almost all $t\in [0,a]$) the
differential inclusion
\begin{equation}
\label{incl}	
\{d_el_g(u)\in T_gG\mid \psi_g(u)=\min\{\psi_g(v)\mid v\in U_1\}\},
\end{equation}
where $\psi_g:=(\Ad g)^{\ast}(\psi_0)=\psi_0\circ \Ad g\in \mathfrak{g}^{\ast}$, and $\psi_0\in\mathfrak{g}^{\ast}$ is a fixed covector such that  $\min\{\psi_0(v)\mid v\in U_1\}=1.$
If the set $U_1$ is strictly convex then the differential inclusion (\ref{incl}) is a continuous vector field on $G.$

3. (Conservation law). Furthermore, $\psi(t)(dl_{g(t)^{-1}}(g'(t)))=1$ for almost all $t\in [0,a]$ and $\psi(t)=\psi_{g(t)}.$
\end{theorem}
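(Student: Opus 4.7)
The plan is to cast the length-maximization problem as a time-anti-optimal control problem on $G$ and then apply a left-invariant version of the Pontryagin principle, using the Lie-group structure to reduce the adjoint equation on $T^{*}G$ to an equation on $\mathfrak{g}^{*}$.

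For part 1, I would first reparametrize any candidate timelike future-directed longest arc by arclength, so that $u(t)$ lies on the boundary of the closed unit ball $U_{1}$ of the antinorm $\nu$ for almost every $t$. The arc then becomes a trajectory of the left-invariant control system $dl_{g(t)^{-1}}(g'(t))=u(t)\in U_{1}$ whose running time equals its length; maximizing the length between fixed endpoints amounts to maximizing the final time, i.e.\ to time-anti-optimality. Lipschitz continuity of $g(t)$ is automatic from the boundedness of $U_{1}$ in the relevant directions and the absolute continuity of $g$.

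For part 2, I would apply the classical Pontryagin principle to this time-anti-optimal problem with fixed endpoints and free terminal time. For the left-invariant control system $g'=dl_{g}(u)$, the pre-Hamiltonian on $T^{*}G$ takes the form $H(g,\psi,u)=\psi(dl_{g}u)$. Passing to the left-trivialization, I would set $\psi_{g}(t):=(dl_{g})^{*}\psi(t)\in\mathfrak{g}^{*}$; the Hamiltonian equations then reduce to $\dot\psi_{g}=(\ad u)^{*}\psi_{g}$ on $\mathfrak{g}^{*}$. Since $\frac{d}{dt}\Ad g(t)=\Ad g(t)\circ\ad u(t)$, the ansatz $\psi_{g}(t)=(\Ad g(t))^{*}\psi_{0}$ with fixed $\psi_{0}\in\mathfrak{g}^{*}$ automatically solves this reduced equation. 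The Pontryagin optimality condition for \emph{maximizing} the terminal time becomes the \emph{minimum} condition $\psi_{g}(u(t))=\min\{\psi_{g}(v):v\in U_{1}\}$ for almost every $t$, which is exactly the inclusion (\ref{incl}). Strict convexity of $U_{1}$ renders the minimizer unique, so the multi-valued map $g\mapsto\{u\in U_{1}:\psi_{g}(u)=\min_{v\in U_{1}}\psi_{g}(v)\}$ reduces to a continuous single-valued vector field on $G$.

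For part 3, I would invoke the standard fact that $H$ is constant along an extremal when the system and the cost are time-independent; hence $\psi_{g}(u(t))$ is constant on the optimal trajectory. The normalization $\min\{\psi_{0}(v):v\in U_{1}\}=1$ forces this constant to equal $1$, which gives the conservation law after unwinding the definitions.

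The main obstacle, in my view, is the sign and convention calibration for the anti-optimal variant: the Lorentzian setting requires one to work with an antinorm rather than a norm, and care is needed in restricting to future-directed causal controls and in distinguishing the resulting \emph{minimum} principle from the standard \emph{maximum} principle, including the proper handling of abnormal extremals, where $\psi_{0}$ may relate degenerately to $U_{1}$. The remaining steps are a rather mechanical transfer of the standard Pontryagin machinery through the left-trivialization of $T^{*}G$.
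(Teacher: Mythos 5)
The paper does not actually prove this statement: it is quoted verbatim as Theorem 12 of \cite{BerGich01}, with the remark that items 2 and 3 are related to Theorem 5 of Chapter 12 in \cite{Jur97}, so there is no in-paper proof to compare against. Your route --- recast length maximization as a time-anti-optimal problem, left-trivialize $T^{*}G$, observe that $\psi_{g}(t)=(\Ad g(t))^{*}\psi_{0}$ solves the reduced adjoint equation $\dot\psi(w)=\psi([u(t),w])$, and read off the minimum condition and the constancy of the Hamiltonian --- is exactly the mechanism the paper itself uses downstream (equations (\ref{hame1})--(\ref{max}) and Theorem \ref{main}), and it is the intended argument. Your sign calibration is consistent with the paper's conventions, and the uniqueness-plus-continuity argument for the strictly convex case is fine, modulo the caveat that the minimum in (\ref{incl}) is attained only when $\psi_{g}\vert_{\mathfrak{p}}$ is (dual to) a future directed timelike vector, which is the content of Proposition \ref{neg}.

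There is one concrete misstep in part 1. You assert that Lipschitz continuity of $g(t)$ is ``automatic from the boundedness of $U_{1}$ in the relevant directions,'' but the paper explicitly notes that $U_{1}$, unlike the sub-Finsler control body $W$, is \emph{not} compact: it is an unbounded hyperboloid region, and its boundary $\{\nu(u)=1\}$ is an unbounded sheet. Consequently the arclength reparametrization of an admissible Lipschitz timelike curve need not be Lipschitz: where $\nu(u(t))\to 0$ (velocity approaching the light cone) the reparametrized control $u/\nu(u)$ satisfies $\nu=1$ but can be unbounded in any Euclidean norm. So the essential boundedness of the optimal control is a genuine issue specific to the Lorentzian setting; it must either be argued from the maximality of the arc (e.g.\ by showing a longest arc stays uniformly inside the timelike cone on compact parameter intervals) or be built into the admissible class, as the paper effectively does by defining horizontal curves to be Lipschitzian from the outset. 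Apart from this, the inequality chain you need for time-anti-optimality (any admissible trajectory with $\nu(u)\geq 1$ has transfer time at most its length, which is at most $d(g_{0},g_{1})$, while the arclength-parametrized longest arc attains this bound) should be stated explicitly rather than summarized as ``maximizing length amounts to maximizing time.''
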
	 

Let us explain that the antinorm $\nu$ has properties, similar to those of the antinorm $\|u\|:=\sqrt{\langle u,u\rangle},$ $U_1$ 
is one of the closed convex connected components of the set
$$V_1=\{v\in\mathfrak{p}\mid \nu(v)\geq 1\}.$$

In the case of a Lie group $G$ with a left-invariant pseudoscalar product $\langle\cdot,\cdot\rangle$ the antinorm
$\nu(u):=\|u\|=\sqrt{\langle u,u\rangle}$ is considered. If $\langle u,u\rangle>0$ then
$$\|\alpha u\|=|\alpha|\|u\|,\, \alpha\in\mathbb{R};\, \|u_1+u_2\|\geq \|u_1\|+\|u_2\|,\,\mbox{if}\ \langle u_1,u_2\rangle>0.$$
Therefore, the region $U_1$ is convex, moreover, strictly convex, closed, but unlike $W,$ it is not compact.
Note also that cones
$$\mathcal{C}=\{u\in \mathfrak{p}\mid\langle u,u\rangle>0, \langle u, v\rangle>0\}$$
and $\overline{\mathcal{C}}$, containing $U_1$, are open and closed
semigroups with respect to addition.

\begin{remark}
The items 2 and 3 of Theorem \ref{bergich} are related to Theorem 5 of Chapter 12 in \cite{Jur97}. 	
\end{remark} 

In general case of a left-invariant sub-Lorentzian structure on a connected Lie group $G$,  parametrized by arclength timelike longest arcs may be 
{\it abnormal}, when $\psi(u(t))=\min\{\psi(t)(u)\mid u\in U_1\}\equiv M_0=0.$

A special form of the Pontryagin maximum principle (together with the corres\-pon\-ding Hamiltonian system) for left-invariant (sub-)Riemannian metrics on Lie groups is derived in \cite{BerZub}, \cite{Ber14} and is based on the Pontryagin maximum principle for the time-optimal problem from \cite{Pontr}.

Similarly, on the base of ideas used in  Theorem \ref{bergich}, in Theorem \ref{main} is formulated in detail somewhat differently
the {\it Pontryagin minimum principle} for timelike curves and the {\it conjugate ordinary differential equations} for a nonzero covector function $\psi=\psi(t),$ $t\in \mathbb{R},$ giving the mentioned necessary conditions for solutions of the specified problem, as well as their analogues in Theorem \ref{isotrn} 
for isotropic curves with a different control region.

The curves satisfying these conditions are called {\it extremals}.

In this paper, we found  {\it nonspacelike}, i.e. {\it timelike or isotropic}, extremals, and prove that 
so-called {\it normal}
(and {\it nonstrictly abnormal}) extremals of the above left-invariant sub-Lorentzian structure on the Lie group $\GL^+(2,\mathbb{C})$ are geodesic.

There are quite a lot of papers on left-invariant sub-Lorentzian structures on Lie groups.
Apparently, the first such paper is by M.~Grochovsky \cite{Gr02},
except for the earlier paper \cite{BerGich01}, where in examples 16 and 17 was considered a sub-Lorentzian structure on the Heisenberg group (in different terms) and  descriptions of timelike longest arcs, different from segments of 1-parameter subgroups and their shifts, and a subgraph of the antinorm for structure were given.

We have found no clear and simple formulation of analogues to the Pontryagin principle in papers on sub-Lorentzian geometry except in the recent paper \cite{Sach23} by Yu.L.~Sachkov and E.F.~Sachkova, which includes many references to these papers.

In Sec. \ref{subriem} and \ref{subriem1} we study geodesics of left-invariant sub-Riemannian metric $\rho$ on the Lie group
$\SL(2,\mathbb{C})$ defined by the scalar product $(\cdot,\cdot)$ on the space $H_0=H\cap \mathfrak{sl}(2,\mathbb{C}),$ where $(\cdot,\cdot)$ is a restriction of the pseudoscalar product $-\langle\cdot,\cdot\rangle$ to $H_0.$ It follows from paper \cite{Ber18} that all such geodesics are normal and are products of at most two 1-parameter subgroups if they start at the unit.

In Theorem \ref{ppp} we prove that every 1-parameter subgroup with  initial tangent vector from $H_0$ is a metric line. 
Due to the proof of Theorem \ref{ppp}, the mapping $\exp:H_0\rightarrow \exp(H_0)$ is a diffeomorphism.  The set $\exp(H_0)$ consists exactly of
all positive definite Hermitian matrices in $\SL(2,\mathbb{C}).$ The set $\exp(H_0)\setminus \{I\}$ 
coincides with the set of all {\it boosts} relative to the standard time axis in Minkowski space-time $M_0$ (see Sec. \ref{append}).
They generate the group $\SL(2,\mathbb{C}).$ Proposition \ref{p1} presents sub-Riemannian geodesics 
for which is established an upper bound for the lengths of their shortest segments.
In Proposition \ref{osn}, we establish intersections of sub-Riemannian geodesics with $\exp(H_0)$.
The proof of Proposition \ref{osn} is not easy, but Proposition is justified by its connection with boosts and positive definite Hermitian matrices.

In Sec. \ref{apple}, using sub-Riemannian geodesics and shortest arcs in $(\SL(2,\mathbb{C}),\rho)$,  
we characterize future oriented nonspacelike longest arcs (Theorem \ref{slgeod}) and geodesics in  $(\GL^+(2,\mathbb{C}),d)$ with origin at the unit.
In Theorem \ref{slgeod} is proved that if $e$ has no connection to $g\in \GL^+(2,\mathbb{C})$ by a longest arc, then there is no future directed nonspacelike horizontal curve which joins $e$ and $g.$

In Theorems \ref{main2} and \ref{main3} we prove that each normal nonspacelike sub-Loretzian extremal on
$\GL^+(2,\mathbb{C})$ with origin at the unit is realized as a product of at most two 1-parameter subgroups.

An exact matrix form for normal nonspacelike extremals is established in Corollary
\ref{form} on the base of Theorems \ref{main2}, \ref{main3} and Proposition \ref{expon}. In Proposition, \ref{isom} we prove that the action of the subgroup $\SU(2)\subset \GL^+(2,\mathbb{C})$ on the space $(\GL^+(2,\mathbb{C}),d)$ by conjugations is isometric. This allows, up to isometries, to simplify the matrix form for extremals.

In particular, all normal sub-Lorentzian extremals are geodesic (Theorem \ref{slgeod1}), and every segment of a future directed nonspacelike 1-parameter subgroup with an initial vector from $H$ is a longest arc (Proposition \ref{slpr} and Theorem \ref{onepa}).
In Sec. \ref{abn}, we proved that all nonstrictly abnormal extremals on $\GL^+(2,\mathbb{C})$ with origin at the unit are 1-parameter subgroups, as in
Proposition \ref{slpr} and Theorem \ref{onepa}. Therefore, each of their segments is a longest arc in  $(\GL^+(2,\mathbb{C}),d).$ 

In the last section of the paper, we present important connections of the problem under study with some concepts and results from mathematics and Lorentzian geometry. At the end of this paper one unsolved question is raised.

\section{Preliminaries}
\label{res}

Let $G$ be a connected Lie group with the Lie algebra  $(\mathfrak{g},[\cdot,\cdot])$,  $\mathfrak{p}\subset\mathfrak{g}$ be a subspace, generating the Lie algebra $\mathfrak{g}$ by the operation $[\cdot,\cdot]$, $\langle\cdot,\cdot\rangle$ be a pseudoscalar product with the signature   $(+,-,\dots, -)$ on $\mathfrak{g}$ 
and an orthonormal basis $e_0,\dots, e_n$ in the Lie algebra  $\mathfrak{g}$ such that  $e_0,\dots, e_r$ is an orthonormal basis of the subspace $\mathfrak{p}$, $v:=e_0,$ $\langle v, v\rangle= 1.$ 
The corresponding left-invariant pseudoscalar product on $TG$ we also denote by $\langle\cdot,\cdot\rangle$.

Denote by $\Delta$ the left-invariant distribution on $G$ such that $\Delta(e)=\mathfrak{p}$. A vector 
$w\in T_{g}G$, $g\in G$, is called horizontal if $w\in\Delta(g)$, i.e., $(dl_{g})^{-1}(w)\in\mathfrak{p}$, where $l_{g}:h\in G\rightarrow g\cdot h$ is the
left shift on the group $(G,\cdot)$ by an element $g$. A Lipschitzian curve $g(t)$, $t\in [0,a]$, in $G$ is called horizontal if $g'(t)\in\Delta(g(t))$ for almost all $t\in [0,a]$. Since $\mathfrak{p}\subset\mathfrak{g}$ generates the Lie algebra $\mathfrak{g}$, then, by the Chow--Rashevskii theorem, any two points $g_1,g_2\in G$ can be joined by a horizontal curve. 

Further we shall consider only horizontal vectors and curves. A vector $w$ is called:

1) timelike if $\langle w,w\rangle >0$;

2) spacelike if $\langle w,w\rangle <0$ or $w=0$;

3) isotropic if $\langle w,w\rangle=0$ and $w\neq 0$;

4) nonspacelike if $\langle w,w \rangle\geq 0$.

A horizontal curve $g(t)$, $t\in [0,a]$, in $G$ is called timelike if $\langle g'(t),g'(t)\rangle>0$ for almost all $t\in [0,a]$; 
spacelike, isotropic and nonspacelike horizontal curves are defined in a similar way.

A nonspacelike vector $w\in\Delta(g)$ is {\it future directed  (resp. past directed)} if $\langle (dl_{g})^{-1}(w),v\rangle>0$ 
(resp.  $\langle (dl_{g})^{-1}(w),v\rangle<0$).

The length of a nonspacelike curve $g(t)$, $t\in [0,a]$, is given by the formula
$$L(g)= \int\limits_{0}^{a}\sqrt{\langle g'(t),g'(t)\rangle}dt.$$

For any points $g_0,g_1\in G$ denote by $\Omega_{g_0g_1}$ the set of all future directed nonspacelike curves  
$g(t)$, $t\in [0,a]$ (i.е., $g'(t)$ is future directed for almost all $t\in [0,a]$), in $G$ that join $g_0=g(0)$ to $g_1=g(a)$. If $\Omega_{g_0g_1}\neq\varnothing$, then the sub-Lorentzian distance from  $g_0$ to $g_1$ is equal to
\begin{equation}
\label{co}
d(g_0,g_1)=\sup\{L(g)\mid g\in\Omega_{g_0g_1}\}.
\end{equation}
If  $\Omega_{g_0g_1}=\varnothing$ then we put $d(g_0,g_1)=-\infty$.

A future directed nonspacelike curve $g(t)$, $t\in [0,a]$, of the sub-Lorentz space $(G,d)$ is called a longest arc if  it realizes the supremum in (\ref{co}) between its endpoints  $g(0)=g_0$ and $g(a)=g_1$. A Lipschitzian curve
$g(t)$, $t\in\mathbb{R}$, in $(G,d)$ is called a (sub)Lorentz geodesic if locally it is a longest arc.

A timelike longest arc $g(t)$, $0\leq t\leq a=d(g_0,g_1)$, in $(G,d)$ with $g(0)=g_0$, $g(a)=g_1$, is a time--anti--optimal solution to the control system
\begin{equation}
\label{system}
g'(t)=dl_{g(t)}(u(t)),\quad u(t)\in U=\left\{u\in\mathfrak{p}\mid\langle u,u\rangle\geq 1,\,\,\langle u, v\rangle>0\right\},
\end{equation}
with indicated endpoints.

{\it The time--anti--optimal control problem} is to find a measurable control $u(t)$, $t\in [0,a]$, such that the corresponding Lipschitzian trajectory $g(t )$, $t\in [0,a]$, joins points $g_0$ and $g_1$, and the transition time $a$ from $g_0$ to $g_1$ is maximum.

At first, one needs to prove the existence of nonspacelike longest arcs of (sub-)Lo\-ren\-t\-zian space $(G,d)$ joining certain pairs of points from $G.$

After that, to find them, {\it Pontryagin minimum principle} is used for the time--anti--optimal control problem and a covector function
$\psi=\psi(t)\in T^{\ast}_{g(t)}G$, which gives only the necessary conditions in general case. 
The covector function can be considered as a left-invariant 1-form on $(G,\cdot)$ and identified with function $\psi(t)\in\mathfrak{g}^{\ast}=T_e^{\ast}G.$

Every optimal timelike trajectory $g(t),$ $0\leq t\leq T,$ is determined by some measu\-rable optimal control $u=u(t)\in U,$
$0\leq t\leq T,$ and for an absolutely continuous non-vanishing function $\psi=\psi(t),$ $0\leq t\leq T,$ we have for almost all $t\in [0, T],$ 
\begin{equation}
\label{sg}
\dot{g}(t)=dl_{g(t)}(u(t)),
\end{equation} 
\begin{equation}
\label{hame1}
\psi(w)^{\prime}= \psi([u(t),w]),\,w\in \mathfrak{g},
\end{equation}
\begin{equation}
\label{max}
M(t):=\psi(t)(u(t))=\min_{u\in U}\psi(t)(u):=M_0\geq 0
\end{equation}

\begin{definition}
A timelike extremal for the problem (\ref{system}) is a parameterized by arc\-length future directed curve 
$g = g(t)$, $t\in \mathbb{R},$ satisfying the Pontryagin minimum principle for the time--anti--optimal problem.	
An extremal is called normal (abnormal) if $M_0 > 0$ ($M_0=0$).
Every normal extremal $g = g(t)$, $t\in \mathbb{R},$ is parameterized by the arclength, i.e., $\langle g'(t),g'(t)\rangle=1$ for almost all $t\in\mathbb{R}$; proportionally changing $\psi=\psi(t),$ $t\in \mathbb{R},$ if it is necessary, one can assume that $M_0=1.$
\end{definition}

Every covector $\psi\in \mathfrak{g}^{\ast}$ can be considered as a vector $\psi\in\mathfrak{g},$ setting $\psi(v)=\langle \psi,v\rangle $ for
each $v\in \mathfrak{g}.$
Then the function $M(t)$ in (\ref{max}) takes the form
\begin{equation}
\label{M1}
M(t):=\langle\psi(t),u(t)\rangle=\min_{u\in U}\langle\psi(t),u\rangle=M_0\geq 0.
\end{equation}
Moreover, the equality (\ref{hame1}) takes the form 
\begin{equation}
\label{hame2}_
\langle \psi'(t),w\rangle= \langle\psi,[u(t),w]\rangle,\,w\in \mathfrak{g}.
\end{equation}

The vectors $e_0,-e_1,\dots,-e_n$ form a basis in $\mathfrak{g}^{\ast}=T_e^{\ast}G$, dual to the basis $e_0,e_1,\dots,e_n$ in $\mathfrak{g}$, because
$e_i(e_j)=\langle e_i,e_j\rangle=0$ for $i\neq j$, $i,j=0,\dots,n$,
$$e_0(e_0)=\langle e_0,e_0\rangle=1;\quad -e_i(e_i)=-\langle e_i,e_i\rangle=1,\,\,i=1,\dots,n.$$

\begin{proposition}
\label{neg}
Let $v, w\in\mathfrak{p}$ be respectively future directed timelike vector and spacelike vector.
Then the ranges of functions $f(u):=\langle v, u\rangle$ and $g(u):=\langle w, u\rangle,$ $u\in U,$ are equal to $[\|v\| ,+\infty)$ and $\mathbb{R}.$ 	
\end{proposition}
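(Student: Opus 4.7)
The plan is to handle $f$ and $g$ separately. For $f$ one gets the lower bound from the Lorentzian reverse Cauchy--Schwarz inequality; for $g$ one constructs an explicit one-parameter family in $U$ that realises arbitrarily large positive and negative values.

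For $f(u) = \langle v, u\rangle$, note first that every $u \in U$ is itself a future-directed timelike vector, since $\langle u, u\rangle \geq 1 > 0$ and $\langle u, e_0\rangle > 0$ by the very definition of $U$ (here $e_0$ is the time-orientation vector). The reverse Cauchy--Schwarz inequality for two future-directed timelike vectors in a space of signature $(+,-,\dots,-)$, equivalent to the reverse triangle inequality recalled in the introduction after Theorem~\ref{bergich}, therefore yields
\[
\langle v, u\rangle \;\geq\; \|v\|\,\|u\| \;\geq\; \|v\|,
\]
since $\|u\| \geq 1$ on $U$. Equality is attained at $u_0 := v/\|v\|$, which lies in $U$ because $\langle u_0, u_0\rangle = 1$ and $\langle u_0, e_0\rangle = \langle v, e_0\rangle/\|v\| > 0$. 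To sweep out the rest of $[\|v\|, +\infty)$, I would use the family $t u_0$ for $t \geq 1$: each such vector lies in $U$ and $f(t u_0) = t\|v\|$ takes every value in the interval.

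For $g(u) = \langle w, u\rangle$ with spacelike $w$, assume $w \neq 0$ (the case $w = 0$ is tacitly excluded, since it produces the constant range $\{0\}$). Decompose $w = w_0 e_0 + w_\perp$ with $w_0 = \langle w, e_0\rangle$ and $w_\perp \in \mathfrak{p}$ orthogonal to $e_0$; then $\langle w_\perp, w_\perp\rangle = \langle w, w\rangle - w_0^2 < 0$, so $K := \sqrt{-\langle w_\perp, w_\perp\rangle} > 0$ and in particular $|w_0| < K$. Setting $\hat w_\perp := w_\perp/K$, I would consider
\[
u(T) := (\cosh T)\, e_0 \;+\; (\sinh T)\,\hat w_\perp, \qquad T \in \mathbb{R}.
\]
A direct computation gives $\langle u(T), u(T)\rangle = \cosh^2 T - \sinh^2 T = 1$ and $\langle u(T), e_0\rangle = \cosh T > 0$, so $u(T) \in U$ for every $T$. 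Another short computation, using $\langle w, e_0\rangle = w_0$ and $\langle w, \hat w_\perp\rangle = -K$, yields
\[
g(u(T)) \;=\; w_0 \cosh T \,-\, K \sinh T \;=\; \tfrac12(w_0 - K)e^T + \tfrac12(w_0 + K)e^{-T}.
\]
Since $w_0 - K < 0$ and $w_0 + K > 0$, this tends to $-\infty$ as $T \to +\infty$ and to $+\infty$ as $T \to -\infty$. Continuity of $g$ along the curve $u(T)$ and the intermediate value theorem then give that the range of $g$ is all of $\mathbb{R}$.

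The argument is elementary; the only points requiring mild care are invoking the reverse Cauchy--Schwarz inequality cleanly for the signature $(+,-,\dots,-)$ and tracking the sign of $w_0 \pm K$ to extract the correct asymptotics for $g$.
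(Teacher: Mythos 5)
Your proof is correct. The core mechanism is the same as the paper's --- a hyperbolic one-parameter family of the form $\cosh T\, e_0 + \sinh T\, n$ lying on $\partial U$, together with a reverse Cauchy--Schwarz lower bound for $f$ --- but you arrive at it differently. The paper first invokes the transitivity of $\SO_0(1,n)$ on $U$, on $\partial U$, and on the pseudospheres $\{\langle w,w\rangle=-\beta^2\}$ to normalize $v=\alpha e_0$ and $w=\beta e_1$, after which both ranges are read off from the single curve $\cosh t\,e_0+\sinh t\,e_1$. You instead work with the given vectors: for $f$ you combine reverse Cauchy--Schwarz with the ray $t\,v/\|v\|$, $t\ge 1$, and for $g$ you decompose $w=w_0e_0+w_\perp$ and run the hyperbolic curve in the plane spanned by $e_0$ and $w_\perp$, using $|w_0|<K$ to obtain both asymptotic signs and then the intermediate value theorem. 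The paper's route buys brevity; yours buys self-containedness --- you never need the (standard but unproved in the paper) transitivity statements, and you correctly flag the tacit exclusion of $w=0$, under which the claimed range $\mathbb{R}$ would fail given the paper's convention that $0$ is spacelike. The only implicit step on your side is that $\langle v,u\rangle>0$ for two future-directed timelike vectors, which is needed before the reverse triangle inequality quoted in the introduction can be squared into reverse Cauchy--Schwarz; this is standard and harmless.
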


\begin{proof}
It is known that the general orthochronic Lorentz group $SO_0(1,n)$ acts transitively on each of the following sets: $U,$ $\partial U,$ 
the set of all future directed isotropic vectors and the set of all (spacelike) vectors $w\in\mathfrak{g}$ such that $\langle w, w\rangle= -\beta^2$ for a fixed $\beta >0.$
Therefore we can assume that $v=\alpha e_0,$ $w=\beta e_1,$ $\alpha>0,$ $\beta>0.$
	
It is clear that $u:={\rm ch} te_0 + {\rm sh} t e_1\in\partial U,$ $t\in \mathbb{R}$. Then  
$$\langle v, u\rangle= \alpha ({\rm ch} t)=\|v\|({\rm ch}t),\, \langle w,u\rangle= -\beta ({\rm sh} t),$$   
whence follows Proposition \ref{neg}, since $\langle v, u\rangle \geq\alpha$ for $u\in U$ in general case.
\end{proof}

Due to Proposition \ref{neg}, the equality in (\ref{M1}) holds only when the orthogonal projection of the vector 
$\psi(t)\in\mathfrak{g}$ to $\mathfrak{p}$ is equal to zero ($M_0=0$) or a future directed timelike vector in $\mathfrak{p}$ ($M_0>0$).

Let $g(t)$, $t\in\mathbb{R}$, be a normal timelike extremal and $M_0=1$. 
Let us denote by $\psi_i(t)$, $i=0,\dots,n$, the coordinates of the (future directed timelike) covector function
$\psi(t)$ in the specified dual basis.

Then 
\begin{equation}
\label{psi}
\psi(t)=\psi_0(t)e_0-\sum\limits_{i=1}^{n}\psi_i(t)e_i,\, \mbox{and besides }\, \psi_0(t)>0.
\end{equation} 

Set us similarly $u(t)=u_0(t)-\sum_{i=1}^{r}u_i(t)e_i,$ where $u_0(t)>0.$ 

Then (\ref{M1}) has the form
\begin{equation}
\label{min}
\langle\psi(t),u(t)\rangle=\psi_0(t)u_0(t)-\sum_{k=1}^{r}\psi_k(t)u_k(t)=\min_{u\in U}\langle\psi(t),u\rangle=1.
\end{equation} 
The equality (\ref{min}) is valid iff $u_k(t)=\psi_k(t),$ $k=0,\dots, r$, and $\langle u(t),u( t)\rangle=1$.
Due to the first equality in (\ref{min}), reasoning as in \cite{BerZub}, \cite{Ber14}, we obtain the theorem.

\begin{theorem}
\label{main}
Any normal timelike extremal of a left-invariant (sub-)Lorentzian (anti)metric on the Lie group $G$,
parameterized by arclength, is a solution to the system of differential equations
\begin{equation}
\label{equat}
g'(t)=dl_{g(t)}(u(t)),\quad u(t)=\psi_0(t)e_0-\sum\limits_{i=1}^{r}\psi_i(t)e_i,\quad \|u(0)\|=1,\quad\psi_0(t)>0,
\end{equation}
\begin{equation}
\label{equat1}
\psi_j'(t)=\sum\limits_{k=0}^{n}\left(C_{0j}^{k}\psi_0\psi_k-\sum\limits_{i=1}^{r}C_{ij}^{k}\psi_i\psi_k\right),\,\,j=0,\dots,n.
\end{equation}
Here $C_{ij}^{k}$ are structure constants in the basis $e_0,\dots, e_n$ of the Lie algebra $\mathfrak{g}$.
\end{theorem}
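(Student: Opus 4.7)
The plan is to exploit the fact that the Pontryagin minimum principle already gives us (\ref{M1}) and (\ref{hame2}); what remains is to unpack (\ref{M1}) into an explicit formula for $u(t)$ in the chosen basis, and to write (\ref{hame2}) out component by component using the structure constants. The derivation thus splits into two nearly decoupled steps, one pointwise and one tensorial.

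For the first step, I would fix $t$ and treat the minimization (\ref{min}) as a constrained optimization problem over $\{u\in\mathfrak{p}\mid u_0^2-\sum_{i=1}^{r}u_i^2\geq 1,\ u_0>0\}$. Since the functional is linear and the feasible set is closed and convex, the minimum is attained on the boundary hyperboloid $u_0^2-\sum_{i=1}^{r}u_i^2=1$, $u_0>0$. The stationarity condition for the Lagrangian
\[
\psi_0 u_0-\sum_{i=1}^{r}\psi_i u_i-\mu\Bigl(u_0^2-\sum_{i=1}^{r}u_i^2-1\Bigr)
\]
yields $u_k=\psi_k/(2\mu)$ for $k=0,\dots,r$. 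Substituting back into the constraint and the normalization $M_0=1$ forces $2\mu=\sqrt{\psi_0^2-\sum_{i=1}^{r}\psi_i^2}=1$, so that $u_k(t)=\psi_k(t)$; this is precisely (\ref{equat}), and it also delivers $\|u(0)\|=1$ together with $\psi_0(t)>0$ (the latter from $u_0>0$).

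For the second step, I would substitute $w=e_j$ into the adjoint equation (\ref{hame2}). Because $e_0,-e_1,\dots,-e_n$ is the basis of $\mathfrak{g}^{\ast}$ dual to $e_0,\dots,e_n$, one has $\langle\psi'(t),e_0\rangle=\psi_0'(t)$ and $\langle\psi'(t),e_j\rangle=\psi_j'(t)$ for $j\ge 1$, so in every case the left-hand side is $\psi_j'(t)$. On the right-hand side, inserting $u(t)=\psi_0 e_0-\sum_{i=1}^{r}\psi_i e_i$ and $[e_i,e_j]=\sum_k C_{ij}^{k}e_k$ gives
\[
[u(t),e_j]=\sum_{k=0}^{n}\Bigl(\psi_0 C_{0j}^{k}-\sum_{i=1}^{r}\psi_i C_{ij}^{k}\Bigr)e_k,
\]
and pairing with $\psi(t)$ via the same dual basis identity reproduces (\ref{equat1}) verbatim.

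The main obstacle, and essentially the only nontrivial point, is justifying in the first step that the Lagrange critical point is a \emph{global minimum} on $U$ and that the correct sheet of the hyperboloid is picked out. The former is settled by Proposition \ref{neg}, which ensures (via the normalization $M_0>0$) that the projection of $\psi(t)$ onto $\mathfrak{p}$ is future timelike, so $\langle\psi(t),\cdot\rangle$ does attain a unique minimum on $U$ and not $-\infty$; the latter follows from $\mu>0$, which in turn is forced by $u_0>0$. After these sign checks, the rest is direct bookkeeping with the dual basis and structure constants.
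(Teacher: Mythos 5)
Your proposal is correct and follows essentially the same route as the paper: identify the minimizer of the linear functional $\langle\psi(t),\cdot\rangle$ on $U$ as the (unit, future timelike) projection of $\psi(t)$ onto $\mathfrak{p}$ — the paper asserts this directly after Proposition \ref{neg}, while you derive it by a Lagrange multiplier computation on the hyperboloid — and then expand the adjoint equation (\ref{hame2}) in the basis $e_0,\dots,e_n$ using the structure constants. The paper leaves the second step to the cited references \cite{BerZub}, \cite{Ber14}, so your explicit bookkeeping merely fills in details it omits.
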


In case of an isotropic curve, there is no notion of parameterization by arclength.
Therefore, it is natural to accept the normalization $u_0(t)=\psi_0(t)\equiv 1.$ 
Set
$$W:=\{w\in\mathfrak{p}\mid \langle w,e_0\rangle=0,\,\langle w,w\rangle=-1\}.$$
Then the condition for the isotropy of the curve under the second equality in (\ref{equat}) has the form
\begin{equation}
\label{min1} 
\langle\psi(t),w(t)\rangle=\min_{w\in W}\langle\psi(t),w\rangle=-1,\,\mbox{где } w(t)=u(t)-u_0(t)e_0.
\end{equation}

The following theorem is established similarly to Theorem \ref{main}.

\begin{theorem}
\label{isotrn}
Any normal isotropic extremal of a left-invariant (sub-)Lorentzian (anti)metric on the Lie group $G$ is a solution to the system of differential equations
\begin{equation}
\label{isoequat}
g'(t)=dl_{g(t)}(u(t)),\quad u(t)=\psi_0(t)e_0-\sum\limits_{i=1}^{r}\psi_i(t)e_i,\quad \|u(0)\|=0,\quad\psi_0(t)\equiv 1,
\end{equation}
\begin{equation}
\label{isoequat1}
\psi_j'(t)=\sum\limits_{k=0}^{n}\left(C_{0j}^{k}\psi_0\psi_k-\sum\limits_{i=1}^{r}C_{ij}^{k}\psi_i\psi_k\right),\,\,j=1,\dots,n.
\end{equation}
\end{theorem}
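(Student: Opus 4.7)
The plan is to mirror the derivation of Theorem \ref{main} with two modifications: the constraint set $U$ is replaced by the spacelike unit sphere $W \subset \mathfrak{p}$, and the arclength normalization $\|u(0)\|=1$ is replaced by the affine normalization $u_0 \equiv \psi_0 \equiv 1$ — the latter forced by the fact that isotropic curves have zero length, so arclength is unavailable.

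A future-directed horizontal isotropic control with $u_0 = 1$ decomposes as $u = e_0 + w$ with $w = -\sum_{i=1}^r w_i e_i$ and $\sum_{i=1}^r w_i^2 = 1$, i.e.\ $w \in W$. The Pontryagin minimum principle, applied to this constraint set exactly as in the derivation of Theorem \ref{main}, supplies the adjoint equation (\ref{hame2}) together with the pointwise minimization (\ref{min1}). The minimization itself is routine: using (\ref{psi}), $\langle \psi(t),\cdot\rangle$ restricts to a linear form on the Euclidean unit sphere in $\mathrm{span}(e_1,\dots,e_r)$, whose minimizer is the unit vector in the direction of $(\psi_1(t),\dots,\psi_r(t))$; rescaling $\psi$ so that this Euclidean vector has unit length pins the minimum at $-1$ and yields $u_i(t) = \psi_i(t)$ for $i = 1,\dots,r$, giving the second equation of (\ref{isoequat}). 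Substituting this $u(t) = e_0 - \sum_{i=1}^r \psi_i(t) e_i$ into (\ref{hame2}), expanding $[u(t), e_j]$ in structure constants, and reading off coefficients in the dual basis is the same bookkeeping that produced (\ref{equat1}), and delivers (\ref{isoequat1}) for $j = 1,\dots,n$.

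The main obstacle — the only step not already covered by the proof of Theorem \ref{main} — is the self-consistency of the normalization $\psi_0 \equiv 1$ under the adjoint flow, since $\psi_0$ was a genuine dynamical variable in the timelike case. I would verify this by plugging the specific form of $u(t)$ into the $j=0$ component of the full Hamiltonian adjoint system and checking that its right-hand side vanishes identically on the isotropic cone; equivalently, by invoking the Pontryagin conservation law $\langle \psi(t), u(t)\rangle \equiv M_0 = 0$ (the appropriate value of the minimum along a normal isotropic extremal, since $1 - \sum_{i=1}^r \psi_i^2 = 0$) and differentiating along the flow using (\ref{hame2}), which pins down $\psi_0$ as a first integral compatible with the chosen value $1$. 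Once this compatibility is established, the sphere constraint $\sum_{i=1}^r \psi_i(t)^2 = 1$ is preserved automatically as the condition that the trajectory remain isotropic, and the system (\ref{isoequat})–(\ref{isoequat1}) closes up as stated.
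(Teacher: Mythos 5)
Your core argument is exactly the paper's: the authors introduce the control set $W$, the normalization $u_0=\psi_0\equiv 1$ and the minimization condition (\ref{min1}), and then simply declare the theorem ``established similarly to Theorem \ref{main}''; your minimization of the linear form over the Euclidean unit sphere and the structure-constant bookkeeping are precisely that argument, so on this level the proposal is correct and matches the paper. The one point where you go further --- the self-consistency of $\psi_0\equiv 1$ and of the constraint $\sum_{i=1}^r\psi_i^2\equiv 1$ --- is a legitimate worry that the paper silently ignores, but both of your proposed verifications fail as stated. The conservation-law route is circular: writing $|\vec{\psi}|^2=\sum_{i=1}^r\psi_i^2$ and using (\ref{hame1}) with $u=e_0-\sum_i u_ie_i$, $u_i=\psi_i/|\vec{\psi}|$, one gets
$$\sum_{i=1}^{r}\psi_i\psi_i'=\psi\left(\left[u,\sum_{i=1}^{r}\psi_ie_i\right]\right)=|\vec{\psi}|\,\psi([u,e_0])=|\vec{\psi}|\,\psi_0',$$
hence $\frac{d}{dt}\bigl(\psi_0-|\vec{\psi}|\bigr)=0$ holds identically, so constancy of the minimized Hamiltonian carries no information about $\psi_0'$ separately; likewise the $j=0$ adjoint equation reads $\psi_0'=\psi([u,e_0])$, which does not vanish for arbitrary structure constants, so your first check cannot succeed ``identically on the isotropic cone'' for a general $G$. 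What actually closes the system in the paper's application is that $e_0=I/2$ is central in $\mathfrak{gl}^{+}(2,\mathbb{C})$ (the proof of Theorem \ref{main2} records $C^k_{0j}=0$), which forces $\psi_0'=0$ and hence preserves $|\vec{\psi}|=1$; for a general Lie group the statement should be read with this caveat, and your proof should either assume the relevant vanishing of $C^k_{0j}$ or note that the minimizer $u_i=\psi_i/|\vec{\psi}|$ is scale-invariant so that the extremal itself is unaffected.
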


\section{Specifying a sub-Lorentzian structure on $\GL^+(2,\mathbb{C})$}

Let us recall that $\GL^{+}(2,\mathbb{C})$ is the connected Lie group of all complex $(2\times 2)$-matrices with positive determinant.
Its Lie algebra $\mathfrak{gl}^{+}(2,\mathbb{C})$ consists of all complex $(2\times 2)$-matrices with real trace and has a basis
\begin{equation}
\label{e1}
e_0=\frac{\sigma_0}{2},\quad e_1=\frac{\sigma_1}{2},\quad e_2=\frac{\sigma_2}{2},\quad e_3=\frac{\sigma_3}{2},
\end{equation}
where
\begin{equation}
\label{sigma}
\sigma_0=\left(\begin{array}{cc}
1 & 0  \\
0 & 1
\end{array}\right),\quad \sigma_1=\left(\begin{array}{cc}
0 & 1  \\
1 & 0
\end{array}\right),\quad \sigma_2=\left(\begin{array}{cc}
0 & {\bf i}  \\
-{\bf i} & 0
\end{array}\right),\quad \sigma_3=\left(\begin{array}{cc}
1 & 0  \\
0 & -1
\end{array}\right),
\end{equation}

\begin{equation}
\label{e2}
e_4={\bf i}e_1,\quad e_5={\bf i}e_2,\quad e_6={\bf i}e_3.
\end{equation}

The matrices (\ref{sigma}) are Hermitian (the last three of them are called {\it Pauli matrices}) and, like the matrices (\ref{e1}), constitute the real basis of the linear space $H$ of all Hermitian complex $(2\times 2)$-matrices.

The matrices $e_4, e_5, e_6$ are skew-Hermitian.

Let us write down all non-zero commutation relations:
\begin{equation}
\label{b1}
[e_4,e_5]=-[e_1,e_2]={\bf i}[e_4,e_2]={\bf i}[e_1,e_5]=e_6;
\end{equation}
\begin{equation}
\label{b2}
[e_5,e_6]=-[e_2,e_3]={\bf i}[e_5,e_3]={\bf i}[e_2,e_6]=e_4;
\end{equation}
\begin{equation}
\label{b3}
[e_6,e_4]= -[e_3,e_1]={\bf i}[e_6,e_1]={\bf i}[e_3,e_4]=e_5.
\end{equation}
Due to the last two equalities in each of the formulas (\ref{b1}), (\ref{b2}), (\ref{b3}),
\begin{equation}
\label{b4}
-[e_2,e_4]=[e_1,e_5]=e_3,\quad -[e_3,e_5]=[e_2,e_6]=e_1,\quad -[e_1,e_6]=[e_3,e_4]=e_2.
\end{equation}

Due to (\ref{b1}), (\ref{b2}), (\ref{b3}), the skew-Hermitian matrices $e_5,e_6,e_7$ form the basis of some Lie algebra. This is the Lie algebra $\mathfrak{su}(2)$,
consisting of all skew-Hermitian complex $(2\times 2)$-matrices with zero trace:
$$\mathfrak{su}(2)=\left\{\left(\begin{array}{cc}
{\bf i}X & Y \\
-\overline{Y} & -{\bf i}X
\end{array}\right)\left|\,\,X\in\mathbb{R},\,\,Y\in\mathbb{C}\right.\right\}.$$
Let us recall that $\mathfrak{su}(2)$ is the Lie algebra of the compact simply connected Lie group $\SU(2)$ of
all unitary unimodular $(2\times 2)$-matrices.

\begin{remark}
\label{rem}
It follows from  (\ref{b1}) -- (\ref{b4}) that
$\mathfrak{gl}^{+}(2,\mathbb{C})=\mathfrak{su}(2)\oplus H$ and
\begin{equation}
\label{kp}
[\mathfrak{su}(2),\mathfrak{su}(2)]=\mathfrak{su}(2),\quad [\mathfrak{su}(2),H]\subset H,\quad [H,H]=\mathfrak{su}(2).
\end{equation}
As a consequence, $H$ generates the Lie algebra $\mathfrak{gl}^{+}(2,\mathbb{C})$.
Moreover, $\mathfrak{su}(2)$ is a maximal compact subalgebra of the Lie algebra $\mathfrak{gl}^{+}(2,\mathbb{C})$.
\end{remark}

Let us define the Lorentzian quadratic form on $\mathfrak{gl}^{+}(2,\mathbb{C})$:
$$\langle u,u\rangle=u_0^2-\sum\limits_{k=1}^{6}u_k^2,\quad\mbox{where }\, u=\sum\limits_{k=0}^{7}u_ke_k\in\mathfrak{gl}^{+}(2,\mathbb{C}).$$
The corresponding pseudoscalar product on $\mathfrak{gl}^{+}(2,\mathbb{C})$ is
$$\langle u_1,u_2\rangle=\frac{1}{2}\left(\langle u_1+u_2,u_1+u_2\rangle-\langle u_1,u_1\rangle-\langle u_2,u_2\rangle\right).$$
Note that the vectors $e_0,e_1,\dots,e_6$ form an orthonormal basis of the Lie algebra $\mathfrak{gl}^{+}(2,\mathbb{C})$, since
$$\langle e_0,e_0\rangle=1,\quad \langle e_i,e_i\rangle=-1,\,\,i=1,\dots,6;\quad \langle e_i,e_j\rangle=0\quad \mbox{for }i\neq j,\,\,i,j=0,\dots,6.$$
In this case, the vectors $e_0,e_1,e_2,e_3$ form an orthonormal basis of the vector space $H$ with respect to the pseudoscalar product $\langle\cdot,\cdot\rangle$ induced from $\mathfrak{gl}^{+}(2,\mathbb{C})$, and
$$\langle h,h\rangle=h_0^2-h_1^2-h_2^2-h_3^2=4 {\rm det}h,\quad\mbox{where }\, h=\sum\limits_{i=0}^{3}h_ie_i\in H.$$

The pair $(\mathfrak{p}:=H,\langle\cdot,\cdot\rangle)$ with the vector $v:=e_0$ defines a left-invariant sub-Lorentzian structure on $\GL^{+}(2,\mathbb{C })$.

\section{Sub-Riemannian geodesics on $\SL(2,\mathbb{C})$}
\label{subriem}

Let us remind that $\SL(2,\mathbb{C})$ is a subgroup of the group $\GL^{+}(2,\mathbb{C})$, consisting of all complex $(2\times 2)$- matrices with determinant equal to $1$. The vectors $e_i,$ $i=1,\dots,6$, given by the formulas (\ref{e1})\,--\,(\ref{e2}), constitute the orthonormal basis of its Lie algebra $\mathfrak{sl}(2,\mathbb{C})$ with the scalar product $(\cdot,\cdot)$.
 
It follows from (\ref{b1}) -- (\ref{b4}) that for $\mathfrak{p}_0=H_0:=H\cap \mathfrak{sl}(2,\mathbb{C}),$
\begin{equation}
\label{kp}	
\mathfrak{sl}(2,\mathbb{C})=\mathfrak{su}(2)\oplus \mathfrak{p}_0,\quad [\mathfrak{su}(2),\mathfrak{p}_0]= \mathfrak{p}_0,\quad [\mathfrak{p}_0,\mathfrak{p}_0]=\mathfrak{su}(2).
\end{equation}
As a consequence, $\mathfrak{p}_0=H_0$ generates the Lie algebra $\mathfrak{sl}(2,\mathbb{C})$. 

The pair $(H_0,(\cdot,\cdot))$ defines a left-invariant sub-Riemannian metric $\rho$ on $\SL(2,\mathbb{C})$: 
$\rho(x,y)$ is the infimum of the lengths of piecewise continuously differentiable horizontal paths $\gamma=\gamma(t)$, $0\leq t\leq a$,
joining $x,y\in \SL(2,\mathbb{C})$, i.e., the paths with conditions
$$\gamma(0)=x,\,\,\gamma(a)=y,\quad \gamma'(t)=dl_{\gamma(t)}(u(t)),\quad u(t)\in H_0,\,\,(u(t),u(t))\leq 1,$$
the length of each such path is determined by the formula
$l(\gamma)=\int\limits_{0}^{a}\sqrt{(u(t),u(t))}dt.$

In Theorems~3, 4 from \cite{Ber18}, were proved general results on Riemannian symmetric spaces, their isometry and isotropy groups, their Lie algebras satisfying analogues to the relations (\ref{kp}), and sub-Riemannian metrics on isometry groups defined by scalar products on the subspace $\mathfrak{p}_0;$
similar results in somewhat less general cases were obtained earlier in \cite{BCHG02}. From these theorems, applied to the Riemannian symmetric space
$\SL(2,\mathbb{C})/\SU(2)=L^3,$ it follows that for the sub-Riemannian space $(\SL(2,\mathbb{C}),\rho)$ holds the following proposition.

\begin{proposition}
\label{subrgeod}
Each parametrized by arclength geodesic $\gamma=\gamma(t)$, $t\in\mathbb{R}$, in $(\SL(2,\mathbb{C}),\rho)$ with condition $\gamma(0)=e$ is normal and it is a product of two 1-parameter subgroups:
\begin{equation}
\label{geod2}
\gamma(t)=\exp\left(t\sum\limits_{i=1}^{6}\alpha_ie_i\right)\exp\left(-t\sum\limits_{i=4}^6\alpha_ie_i\right),
\end{equation}
where $\alpha_i$, $i=1,\dots,6$, are some arbitrary constants such that
\begin{equation}
\label{norm2}
\alpha_1^2+\alpha_2^2+\alpha_3^2=1.
\end{equation}
\end{proposition}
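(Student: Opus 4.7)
The plan is to invoke Theorems 3 and 4 of \cite{Ber18}, which describe sub-Riemannian geodesics on Lie groups whose horizontal distribution comes from a Cartan-type decomposition of a Riemannian symmetric space. The present proposition is exactly the specialization of those theorems to $\SL(2,\mathbb{C})/\SU(2)=L^{3}$.

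First I would verify the hypotheses. The decomposition $\mathfrak{sl}(2,\mathbb{C})=\mathfrak{su}(2)\oplus\mathfrak{p}_0$ with bracket relations (\ref{kp}) is exactly the Cartan decomposition of the symmetric space $\SL(2,\mathbb{C})/\SU(2)=L^{3}$. The scalar product $(\cdot,\cdot)=-\langle\cdot,\cdot\rangle|_{\mathfrak{p}_0}$ is $\Ad(\SU(2))$-invariant: for any $U\in\SU(2)$ and $h\in H_0$ one has $\Ad(U)h=UhU^{-1}\in H_0$ and $\langle\Ad(U)h,\Ad(U)h\rangle = 4\det(UhU^{-1})=4\det h=\langle h,h\rangle$. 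With this, the cited theorems apply verbatim.

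Second, by \cite[Theorem 3]{Ber18} every arclength sub-Riemannian geodesic in $(\SL(2,\mathbb{C}),\rho)$ starting at $e$ is normal, and by \cite[Theorem 4]{Ber18} it admits the factorization $\gamma(t)=\exp(tX)\exp(-tX_{\mathfrak{k}})$, where $X\in\mathfrak{sl}(2,\mathbb{C})$ and $X_{\mathfrak{k}}\in\mathfrak{su}(2)$ denotes its $\mathfrak{su}(2)$-component, with $X_{\mathfrak{p}_0}=X-X_{\mathfrak{k}}\in\mathfrak{p}_0$. Horizontality and constant-speed parametrization can then be checked directly from the factorization via the identity $\gamma(t)^{-1}\gamma'(t)=\Ad(\exp(tX_{\mathfrak{k}}))X_{\mathfrak{p}_0}$, which lies in $\mathfrak{p}_0$ because $\Ad(\SU(2))\mathfrak{p}_0\subset\mathfrak{p}_0$, and has constant norm $|X_{\mathfrak{p}_0}|$ by $\Ad(\SU(2))$-invariance of $(\cdot,\cdot)$. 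The arclength condition is therefore equivalent to $|X_{\mathfrak{p}_0}|=1$.

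Finally, expanding in the orthonormal basis $e_1,\dots,e_6$: since $e_4,e_5,e_6\in\mathfrak{su}(2)$ and $e_1,e_2,e_3\in\mathfrak{p}_0$, writing $X=\sum_{i=1}^{6}\alpha_i e_i$ gives $X_{\mathfrak{k}}=\sum_{i=4}^{6}\alpha_i e_i$ and $X_{\mathfrak{p}_0}=\sum_{i=1}^{3}\alpha_i e_i$, which yields formula (\ref{geod2}); the normalization $|X_{\mathfrak{p}_0}|^{2}=\alpha_1^{2}+\alpha_2^{2}+\alpha_3^{2}=1$ is (\ref{norm2}).

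The main obstacle, should one wish to reprove the statement rather than cite \cite[Theorems 3, 4]{Ber18}, is twofold: to exclude strictly abnormal extremals and to integrate the Pontryagin Hamiltonian system on $\mathfrak{sl}(2,\mathbb{C})^{*}$. Both steps rely on the symmetric structure (\ref{kp}): in the normal case, the adjoint equation $\dot\psi=\ad^{*}(u)\psi$ together with $u=\psi_{\mathfrak{p}_0}$ and the inclusions $[\mathfrak{p}_0,\mathfrak{p}_0]\subset\mathfrak{su}(2)$, $[\mathfrak{su}(2),\mathfrak{p}_0]\subset\mathfrak{p}_0$ forces the $\mathfrak{su}(2)$-part of $\psi(t)$ to be, up to $\Ad(\gamma(t))$-transport, the constant covector dual to $X_{\mathfrak{k}}$, which is precisely what produces the second exponential factor.
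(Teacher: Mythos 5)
Your proposal is correct and follows essentially the same route as the paper, which likewise derives the proposition by applying Theorems~3 and~4 of \cite{Ber18} to the Riemannian symmetric space $\SL(2,\mathbb{C})/\SU(2)=L^{3}$ with the decomposition $\mathfrak{sl}(2,\mathbb{C})=\mathfrak{su}(2)\oplus\mathfrak{p}_0$ and the relations (\ref{kp}). Your added verification of the $\Ad(\SU(2))$-invariance of $(\cdot,\cdot)$ and the explicit basis expansion are consistent with what the paper leaves implicit.
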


\begin{theorem}
\label{ppp}
Each segment of a 1-parameter subgroup
\begin{equation} 
\label{onepar}
\gamma(t)=\exp(t X),\,\, t\in\mathbb{R},\,\, X\in \mathfrak{p}_0=H_0,\,\, (X,X)=1,
\end{equation}
is a shortest arc of the sub-Riemannian space $(\SL(2,\mathbb{C}),\rho)$.
\end{theorem}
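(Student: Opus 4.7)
The plan is to bridge the sub-Riemannian geometry on $\SL(2,\mathbb{C})$ with the Riemannian geometry of the hyperbolic $3$-space $L^3 = \SL(2,\mathbb{C})/\SU(2)$ via the natural projection $\pi:\SL(2,\mathbb{C})\to L^3$, and then to exploit the fact that every geodesic in $L^3$ is a metric line. The key reduction is to show $\rho(e,\exp(tX)) = |t|$ for every $t\in\mathbb{R}$; the general case then follows from left-invariance of $\rho$, since $\rho(\exp(aX),\exp(bX)) = \rho(e,\exp((b-a)X))$ and the claimed length of the segment is exactly $|b-a|$.

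First, I would unpack the symmetric-space picture attached to the Cartan-type decomposition $\mathfrak{sl}(2,\mathbb{C})=\mathfrak{su}(2)\oplus H_0$ from \eqref{kp}. The scalar product $(\cdot,\cdot)$ on $H_0$ is $\Ad(\SU(2))$-invariant, since $\SU(2)$ acts on $H_0\cong\mathbb{R}^3$ through the double cover $\SU(2)\to\SO(3)$; hence it descends to a well-defined $\SL(2,\mathbb{C})$-invariant Riemannian metric on $L^3 = \SL(2,\mathbb{C})/\SU(2)$, which realizes the Lobachevsky space as mentioned in the introduction. Standard theory of Riemannian symmetric spaces (cf.\ the general results of \cite{Ber18} cited above) then says that for $X\in H_0$ with $(X,X)=1$ the curve $t\mapsto\pi(\exp(tX))$ is a unit-speed geodesic of $L^3$; and since $L^3$ has constant negative curvature, every such geodesic is a metric line, giving $d_{L^3}(\pi(e),\pi(\exp(tX))) = |t|$ for all $t\in\mathbb{R}$.

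Second, I would establish that $\pi$ is length-preserving on horizontal curves. If $\gamma(s)$ is horizontal with $\gamma'(s)=dl_{\gamma(s)}(u(s))$, $u(s)\in H_0$, then using the intertwining relation $\pi\circ l_{g} = \tau_{g}\circ\pi$ (where $\tau_{g}$ denotes the isometric action of $g$ on $L^3$) one obtains
$$\|d\pi_{\gamma(s)}(\gamma'(s))\|_{L^3} \;=\; \|d\pi_{e}(u(s))\|_{L^3} \;=\; \sqrt{(u(s),u(s))},$$
because $d\pi_e:H_0\to T_{\pi(e)}L^3$ is by construction an isometric identification. Consequently $L(\pi\circ\gamma) = l(\gamma)$ for every horizontal $\gamma$.

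Combining these ingredients, for any horizontal $\gamma$ joining $e$ to $\exp(tX)$ one has
$$l(\gamma) \;=\; L(\pi\circ\gamma) \;\geq\; d_{L^3}(\pi(e),\pi(\exp(tX))) \;=\; |t|,$$
so $\rho(e,\exp(tX))\geq |t|$; the opposite inequality is witnessed by the horizontal curve $s\mapsto\exp(sX)$ itself, which has length $|t|$. Thus $\rho(e,\exp(tX))=|t|$, and an application of left-invariance completes the proof. The main technical point is the length-preserving property of $\pi$ on horizontals; everything else is a straightforward comparison argument. Once the identification with the symmetric space $L^3$ is made explicit, the theorem reduces to the classical fact that hyperbolic geodesics are globally shortest.
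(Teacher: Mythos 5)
Your proof is correct and follows essentially the same route as the paper: both arguments project to the symmetric space $L^3=\SL(2,\mathbb{C})/\SU(2)$, use the $\Ad(\SU(2))$-invariance of $(\cdot,\cdot)$ to see that the one-parameter subgroup maps isometrically onto a globally minimizing geodesic of $L^3$, and conclude by comparing lengths since the projection does not increase them. The only cosmetic difference is that the paper routes the length comparison through the auxiliary left-invariant Riemannian metric $\rho_1\leq\rho$ and the Riemannian submersion $\pr$, whereas you verify directly that $\pi$ preserves lengths of horizontal curves (which is exactly the submetry property recorded in Corollary \ref{subm}).
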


\begin{proof}
First of all, note that each segment of the curve (\ref{onepar}) has the same length with respect to $\rho$ and the left-invariant
Riemannian metric $\rho_1\leq \rho$ on $\SL(2,\mathbb{C}),$ defined by the scalar product $(\cdot,\cdot)$ on the Lie algebra $\mathfrak{sl}(2,\mathbb{C})$
of the Lie group $\SL(2,\mathbb{C}).$
By Proposition \ref{isom1}, the scalar product $(\cdot,\cdot)$ in $\mathfrak{sl}(2,\mathbb{C})$ and the direct sum orthogonal to $(\cdot,\cdot)$ in (\ref{kp}) are $\Ad(\SU(2))$-invariant.
Therefore the canonical projection
$$\pr: (\SL(2,\mathbb{C}),\rho_1)\rightarrow (\SL(2,\mathbb{C})/\SU(2)=L^3,\rho_2)$$
onto the Riemannian symmetric space $(L^3,\rho_2)$ is a Riemannian submersion, where $\rho_2$ is an invariant Riemannian metric of constant sectional
curvature $K$ on $L^3,$ uniquely determined by the metric $\rho_1$.

As in general case of Riemannian symmetric spaces, $\pr$ maps isometrically a 1-parameter subgroup (\ref{onepar}), tangent to the horizontal distribution
of Riemannian submersion $\pr,$ onto a geodesic in $(L^3,\rho_2).$	
	
Let us clarify that due to the last equality in (\ref{kp}), for each
$Z\in\mathfrak{p}_0$ we have $[X,Z]\in \mathfrak{su}(2).$
Therefore, the projection of this vector onto $\mathfrak{p}_0$ with respect to the expansion in (\ref{kp}) is equal to $[X,Z]_{\mathfrak{p}_0}=0$.
Thus, we have $([X,Z]_{\mathfrak{p}_0},X)=0$ for each $Z\in\mathfrak{p}_0,$ i.e., the condition 3) of Theorem 5.1.2 from
\cite{BerNik20} holds.
	
Due to the theorem, $\pr(\gamma(t)),$ $t\in\mathbb{R},$ is a geodesic in $(L^3,\rho_2).$
	
Moreover, $\pr$ does not increase distances in general.
	
Theorem \ref{ppp} follows from the above.
\end{proof}

\begin{remark}
\label{curv}
According to Theorems 12.2 of Chapter II and 4.2 of Chapter IV in \cite{Hel}, the constant sectional curvature $K$ can be calculated using the formula $K=([[e_1,e_2],e_1],e_2).$
Calculations using formulas (\ref{b1})---(\ref{b4}) give $K=-1.$
\end{remark}	

Two corollaries follow from the proof of Theorem \ref{ppp}.

\begin{corollary}
\cite{BerGui00}.
\label{subm}	
$\pr: (\SL(2,\mathbb{C}),\rho)\rightarrow (\SL(2,\mathbb{C})/\SU(2)=L^3,\rho_2)$ is a submetry.
\end{corollary}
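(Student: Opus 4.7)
The plan is to verify the defining property of a submetry directly, namely that $\pr$ sends each closed sub-Riemannian ball $\overline{B}_r^{\rho}(x) \subset \SL(2,\mathbb{C})$ onto the closed ball $\overline{B}_r^{\rho_2}(\pr(x)) \subset L^3$. The central observation, extracted from the proof of Theorem \ref{ppp}, is that the horizontal distribution of the left-invariant sub-Riemannian structure on $\SL(2,\mathbb{C})$ — the left translate of $\mathfrak{p}_0 = H_0$ — is precisely the horizontal distribution of the Riemannian submersion $\pr : (\SL(2,\mathbb{C}), \rho_1) \to (L^3, \rho_2)$, because $H_0$ is the orthogonal complement of the vertical subalgebra $\mathfrak{su}(2)$ with respect to $(\cdot,\cdot)$. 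Moreover, on this common distribution $d\pr$ is a fiberwise linear isometry, so a sub-Riemannian horizontal curve and its $\pr$-projection have identical lengths.

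First I would establish the non-expanding inclusion $\pr(\overline{B}_r^{\rho}(x)) \subseteq \overline{B}_r^{\rho_2}(\pr(x))$. Given any piecewise smooth horizontal curve $\gamma$ in $(\SL(2,\mathbb{C}), \rho)$ from $x$ to $y$, the projection $\pr\circ\gamma$ joins $\pr(x)$ to $\pr(y)$ in $L^3$ with the same length; taking infima yields $\rho_2(\pr(x), \pr(y)) \leq \rho(x,y)$. Next, for the reverse inclusion, I would use horizontal lifting. Given $q \in \overline{B}_r^{\rho_2}(\pr(x))$ and a piecewise smooth curve $\beta$ in $L^3$ from $\pr(x)$ to $q$ of length at most $r+\varepsilon$, I lift $\beta$ to a curve $\tilde\beta$ in $\SL(2,\mathbb{C})$ starting at $x$ and tangent to the horizontal distribution (this lift exists and is unique along piecewise smooth curves, since $\SL(2,\mathbb{C}) \to L^3$ is a smooth principal $\SU(2)$-bundle). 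By construction $\tilde\beta$ is sub-Riemannian horizontal and has the same length as $\beta$, so its endpoint $y \in \pr^{-1}(q)$ satisfies $\rho(x, y) \leq r+\varepsilon$.

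To finish, I would send $\varepsilon \to 0$ along a sequence of approximate minimizers $\beta_n$, obtaining endpoints $y_n \in \pr^{-1}(q)$ with $\rho(x, y_n) \to \rho_2(\pr(x), q)$. Using compactness of the $\SU(2)$-fiber $\pr^{-1}(q)$ and completeness, pass to a subsequential limit $y \in \pr^{-1}(q)$ with $\rho(x, y) \leq \rho_2(\pr(x), q)$; combined with the non-expanding inclusion, this gives equality and hence the submetry property. The main obstacle is the closedness step of extracting an accumulation endpoint, but this is routine once one invokes compactness of $\SU(2)$ together with the fact that $\rho$-balls of bounded radius around $x$ are compact (the sub-Riemannian space is locally compact and complete, so the Cohn--Vossen / Hopf--Rinow style argument applies).
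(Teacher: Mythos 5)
Your proposal is correct and follows essentially the same route as the paper: the paper derives the corollary from the fact, established in the proof of Theorem \ref{ppp}, that $\pr$ is a Riemannian submersion whose horizontal distribution coincides with the left-invariant sub-Riemannian distribution generated by $H_0$, and then invokes the general result of \cite{BerGui00}; your argument simply writes out the standard verification (length-preserving projection of horizontal curves, length-preserving horizontal lifts, compactness of the $\SU(2)$-fibers) that the citation encapsulates. The only remark worth adding is that since $L^3$ is a Hadamard manifold, one can lift an actual minimizing geodesic from $\pr(x)$ to $q$ and dispense with the $\varepsilon$-approximation and the limiting step altogether.
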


\begin{corollary}
\label{diff}	
The mapping $\exp: H_0\rightarrow \exp(H_0)$ is a diffeomormism.
\end{corollary}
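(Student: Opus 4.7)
The plan is to reduce Corollary \ref{diff} to the Cartan--Hadamard theorem applied to the Lobachevsky space $L^3$, via the Riemannian submersion and horizontal-geodesic correspondence already exploited in the proof of Theorem \ref{ppp}.

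First I would consider the composite $F := \pr \circ \exp : H_0 \to L^3$. Because $\pr : (\SL(2,\mathbb{C}),\rho_1) \to (L^3,\rho_2)$ is a Riemannian submersion, its differential at $e$ restricts to a linear isometry $d\pr_e : H_0 \to T_o L^3$ of the horizontal subspace onto the tangent space at $o := \pr(e)$. The key observation, already verified in the proof of Theorem \ref{ppp}, is that every one-parameter subgroup $t \mapsto \exp(tX)$ with $X \in H_0$ is horizontal and projects isometrically onto a geodesic in $(L^3,\rho_2)$. It follows that $F$ coincides, under the isometric identification $d\pr_e$, with the Riemannian exponential map $\exp_o^{L^3} : T_o L^3 \to L^3$ at $o$.

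Next I would invoke the hyperbolic geometry of $L^3$: by Remark \ref{curv} it has constant sectional curvature $-1$, and it is simply connected and complete (for instance because the polar decomposition $\SL(2,\mathbb{C}) = P \cdot \SU(2)$ exhibits $L^3$ as diffeomorphic to the Euclidean cell $P$ of positive definite Hermitian unimodular matrices). The Cartan--Hadamard theorem then gives that $\exp_o^{L^3}$ is a diffeomorphism, hence so is $F$. In particular, $dF_X = d\pr_{\exp(X)} \circ d\exp_X$ is an isomorphism for every $X \in H_0$, which forces $d\exp_X : H_0 \to T_{\exp(X)} \SL(2,\mathbb{C})$ to be injective. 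Finally, I would argue that $\pr$ is injective on $\exp(H_0)$: the image $\exp(H_0)$ lies inside $P$, and by the uniqueness of the polar decomposition no two distinct elements of $P$ can lie in the same right $\SU(2)$-coset. Combining these facts, $\exp : H_0 \to \exp(H_0)$ is a smooth injective immersion whose inverse may be written as $g \mapsto F^{-1}(\pr(g))$, which is smooth as a composition of smooth maps; this delivers the required diffeomorphism.

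The main technical hurdle, as I see it, is the identification of $F$ with the Riemannian exponential of $L^3$: one needs not only that horizontal one-parameter subgroups descend to geodesics (which is guaranteed by the vanishing $[X,Z]_{\mathfrak{p}_0} = 0$ for $X,Z \in \mathfrak{p}_0$ used in the proof of Theorem \ref{ppp}), but also that $\pr$ is a Riemannian submersion and that $L^3$ is complete and simply connected. Granted these points, Cartan--Hadamard and the polar decomposition do the rest in a standard fashion.
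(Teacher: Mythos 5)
Your argument is correct and is essentially the route the paper intends: Corollary \ref{diff} is stated to follow from the proof of Theorem \ref{ppp}, i.e.\ from the Riemannian submersion $\pr$ and the fact that one-parameter subgroups tangent to $H_0$ project isometrically onto geodesics of $L^3$, so that $\pr\circ\exp$ becomes the Riemannian exponential of a Cartan--Hadamard manifold, with injectivity of $\pr$ on $\exp(H_0)$ supplied by the polar decomposition. You have simply made explicit the steps the paper leaves to the reader.
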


\section{Applications of sub-Riemannian geodesics on $\SL(2,\mathbb{C})$}
\label{apple}

It is easy to see that the following proposition is true.

\begin{proposition}
\label{iso}
$\GL^+(2,\mathbb{C})$ is isomorphic to the direct product $\mathbb{R}_+I\times\SL(2,\mathbb{C})$, and $\GL(2,\mathbb{C})$ is isomorphic to the
direct product $\mathbb{R}_+I\times S^1\cdot \SL(2,\mathbb{C}),$ where $R_{+}$ is the multiplicative group of positive real numbers.
\end{proposition}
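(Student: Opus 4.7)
The plan is to construct explicit multiplication maps and verify they are Lie group isomorphisms, using that scalar matrices are central and that $\det$ is a multiplicative character. For the first claim I would define $\phi \colon \mathbb{R}_{+}I \times \SL(2,\mathbb{C}) \to \GL^{+}(2,\mathbb{C})$ by $\phi(rI, B) = rB$. Since $rI$ lies in the center of $\GL^{+}(2,\mathbb{C})$, $\phi$ is a smooth homomorphism. Surjectivity follows by setting $r := \sqrt{\det A}$ (the positive square root of the positive real determinant) for any $A \in \GL^{+}(2,\mathbb{C})$, so that $B := r^{-1}A \in \SL(2,\mathbb{C})$. Injectivity follows because $rB = I$ forces $r^{2} = \det(rB) = 1$, hence $r = 1$ and $B = I$. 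Smoothness of the inverse map $A \mapsto \bigl(\sqrt{\det A}\, I,\, (\sqrt{\det A})^{-1} A\bigr)$ is immediate.

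For the second claim I would first observe that $S^{1} \cdot \SL(2,\mathbb{C}) = \{\, zB : z \in S^{1},\, B \in \SL(2,\mathbb{C})\,\}$ is a Lie subgroup of $\GL(2,\mathbb{C})$, since scalar matrices are central. Then I would define $\psi \colon \mathbb{R}_{+}I \times \bigl(S^{1} \cdot \SL(2,\mathbb{C})\bigr) \to \GL(2,\mathbb{C})$ by $\psi(rI, C) = rC$; this is again a smooth homomorphism by centrality. For surjectivity, given $A \in \GL(2,\mathbb{C})$ I would write $\det A = \rho e^{{\bf i}\theta}$ with $\rho > 0$ and factor $A = \bigl(\sqrt{\rho}\,I\bigr) \cdot \bigl(e^{{\bf i}\theta/2} B\bigr)$ where $B := (\sqrt{\rho}\,e^{{\bf i}\theta/2})^{-1} A \in \SL(2,\mathbb{C})$. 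For injectivity, $rC = I$ with $C = zB$ yields $r^{-2} = \det C = z^{2}$; since $r^{-2} > 0$ and $|z^{2}|=1$, this forces $r = 1$ and $z = \pm 1$, whence $B = z^{-1} r^{-1} I = \pm I$ and $C = zB = I$ in either case.

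The only mild subtlety is that $S^{1} I \cap \SL(2,\mathbb{C}) = \{\pm I\}$, so $S^{1} \cdot \SL(2,\mathbb{C})$ is itself a quotient of $S^{1} \times \SL(2,\mathbb{C})$ by $\{(1,I),(-1,-I)\}$ rather than a genuine direct product. This ambiguity is absorbed inside the second factor of $\psi$, and the crucial intersection $\mathbb{R}_{+}I \cap \bigl(S^{1} \cdot \SL(2,\mathbb{C})\bigr) = \{I\}$ remains trivial, because a positive real scalar of modulus one must equal $1$. Consequently there is no real obstacle: the proposition simply expresses the unique factorisation of any invertible $2\times 2$ matrix into a positive dilation and a factor whose determinant lies on the unit circle.
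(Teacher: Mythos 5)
Your proof is correct and complete; the paper itself offers no argument for this proposition, merely prefacing it with ``It is easy to see,'' and your explicit multiplication maps $\phi(rI,B)=rB$ and $\psi(rI,C)=rC$, with surjectivity via the positive square root of $|\det A|$ and injectivity via the determinant, are exactly the intended routine verification. Your closing observation --- that $S^1\cdot\SL(2,\mathbb{C})$ is itself only a quotient of $S^1\times\SL(2,\mathbb{C})$ by $\{(1,I),(-1,-I)\}$ while the intersection $\mathbb{R}_+I\cap\bigl(S^1\cdot\SL(2,\mathbb{C})\bigr)=\{I\}$ stays trivial --- is a worthwhile clarification of how the second statement must be parsed.
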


\begin{theorem}
\label{slgeod}
Let $g\in\GL^+(2,\mathbb{C})$ and
\begin{equation}
\label{cond0} 
g=e^{\xi/2}g_1,\,\mbox{where } g_1\in \SL(2,\mathbb{C}),\,\, \xi\geq \eta:=\rho(e,g_1)>0,
\end{equation} 
$\gamma=\gamma(t),$ $0\leq t\leq \eta,$ be an arbitrary shortest arc in $(\SL(2,\mathbb{C}),\rho)$, parametrized by arclength, such that $\gamma(0)=e,$ $\gamma(\eta)=g_1.$
	
If $\xi>\eta$ then the curve
$$g(t)=e^{{\rm ch(c)}t/2}\gamma({\rm sh(c)}t),\,\, 0\leq t\leq k,\quad\mbox{где }k>0,\,\,c>0,\,\,\xi=k{\rm ch}(c),\,\,\eta=k{\rm sh}(c),$$
is a timelike longest arc in $(\GL^+(2,\mathbb{C}),d),$ which joins $e$ and $g$.
	
If $\xi=\eta$ then the curve $g(t)=e^{t/2}\gamma(t),$ $0\leq t\leq \xi,$ is an isotropic longest arc in $(\GL^+(2,\mathbb{C}),d)$ joining $e$ and $g$.

If $g\notin \mathbb{R}_+I$ and the condition (\ref{cond0}) is not satisfied, then there is no future directed nonspacelike horizontal
curve joining $e$ and $g.$	
\end{theorem}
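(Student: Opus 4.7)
The plan is to push every horizontal future-directed nonspacelike curve emanating from $e$ into the direct-product form from Proposition~\ref{iso} and then bound its Lorentzian length by a one-variable Jensen estimate. First, I would use Proposition~\ref{iso} to decompose any such curve $g(t)$, $0\le t\le a$, with $g(0)=e$, uniquely as $g(t)=e^{\xi(t)/2}g_1(t)$ with $\xi(t)\in\mathbb{R}$, $g_1(t)\in\SL(2,\mathbb{C})$, $\xi(0)=0$, $g_1(0)=e$. Differentiating and using $I/2=e_0$ gives
$$u(t):=dl_{g(t)^{-1}}(g'(t))=\xi'(t)\,e_0+g_1(t)^{-1}g_1'(t).$$
Since $g_1(t)^{-1}g_1'(t)\in\mathfrak{sl}(2,\mathbb{C})$ and $H\cap\mathfrak{sl}(2,\mathbb{C})=H_0$, horizontality of $g$ forces $g_1$ to be horizontal in $(\SL(2,\mathbb{C}),\rho)$ with sub-Riemannian speed $|v(t)|:=\sqrt{(g_1(t)^{-1}g_1'(t),g_1(t)^{-1}g_1'(t))}$. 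The future-directedness and nonspacelike conditions become $\xi'(t)>0$ and $\xi'(t)^2\ge|v(t)|^2$, and
$$L(g(\cdot))=\int_0^a\sqrt{\xi'(t)^2-|v(t)|^2}\,dt.$$

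Next, I would settle part~3 directly: integrating the pointwise inequality $\xi'(t)\ge|v(t)|$ and using that the sub-Riemannian length of $g_1$ is at least $\rho(e,g_1(a))$ yields $\xi(a)\ge\eta$; hence for $g\notin\mathbb{R}_+I$ (so $\eta>0$) with $\xi<\eta$, no future-directed nonspacelike horizontal curve from $e$ to $g$ can exist. For parts~1 and~2, I would then verify by a short Hessian computation that the function $f(\phi,\mu)=\sqrt{\phi^2-\mu^2}$ is jointly concave on the cone $\{\phi\ge\mu\ge 0\}$, and apply Jensen's inequality to obtain
$$L(g(\cdot))\le a\cdot f\!\left(\frac{\xi}{a},\frac{1}{a}\int_0^a|v(t)|\,dt\right)=\sqrt{\xi^2-\left(\int_0^a|v|\,dt\right)^2}\le\sqrt{\xi^2-\eta^2},$$
the last inequality by monotonicity of $y\mapsto\sqrt{\xi^2-y^2}$ and the length bound $\int|v|\,dt\ge\eta$.

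Finally, I would check that the proposed curves saturate this bound. In the timelike case $\gamma$ is unit-speed sub-Riemannian, so $\xi'(t)={\rm ch}(c)$ and $|v(t)|={\rm sh}(c)$ are constants, giving $L=k=\sqrt{\xi^2-\eta^2}$ and endpoint $g(k)=e^{k{\rm ch}(c)/2}\gamma(k{\rm sh}(c))=e^{\xi/2}g_1=g$; in the isotropic case ($\xi=\eta$) the upper bound is $0$ and is trivially attained by the proposed null curve $e^{t/2}\gamma(t)$, whose endpoint is again $g$. The main nontrivial ingredient is only the joint concavity of $f(\phi,\mu)$ (which collapses the constrained optimization to a monotonicity statement); the rest reduces to the direct-product structure from Proposition~\ref{iso} and the triangle inequality for $\rho$.
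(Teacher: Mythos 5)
Your proof is correct and follows essentially the same route as the paper's: the decomposition $g(t)=e^{\xi(t)/2}g_1(t)$ coming from Proposition~\ref{iso}, the translation of horizontality, causality and length into conditions on $\xi'$ and on the $\SL(2,\mathbb{C})$-component, and the comparison of $\int|v|\,dt$ with $\rho(e,g_1)$ (which is exactly how the paper proves the non-existence statement). Your explicit concavity/Jensen estimate giving $L\le\sqrt{\xi^2-\eta^2}$ is a clean and welcome filling-in of the maximality step, which the paper asserts only tersely after computing the length of the candidate curve.
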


\begin{proof}
We get $g'(t)=dl_{g(t)}({\rm ch}(c)e_0+{\rm sh}(c)u({\rm sh}(c)t)),$ where 
$$u({\rm sh}(c)t)=\sum_{i=1}^3u_i({\rm sh}(c)t)e_i,\quad (u({\rm sh}(c)t),u({\rm sh}(c)t))=1.$$
Hence, $g(t),$ $0\leq t\leq k,$ is a  future directed timelike curve, and its length is equal to 
\begin{equation}
\label{lg}	
L=k=\int_0^k \sqrt{{\rm ch}^2(c)-{\rm sh}^2(c)(u({\rm sh}(c)t),u({\rm sh}(c)t))}dt.
\end{equation} 
Moreover, the length $l$ of the shortest arc $\gamma({\rm sh}(c)t),$ $0\leq t\leq k,$ in $(\SL(2,\mathbb{C}),\rho),$ parametrized proportionally to the arclength
with a factor ${\rm sh}(c)>0,$ is equal to the minimal length of horizontal arcs in $\SL(2,\mathbb{C})$ joining $e$ and $g_1$:
$$l=\int_0^k\sqrt{{\rm sh}^2(c)(u({\rm sh}(c)t),u({\rm sh}(c)t))}\equiv k{\rm sh}(c).$$
Due to (\ref{lg}), $g(t),$ $t\in [0,k],$ is a 
future directed timelike longest arc in $(\GL^+(2,\mathbb{C}),d)$, parametrized by arlength and joining $e$, 
$g.$

If $\xi=\eta$ then replacing ${\rm ch}(c)$ with $1$ in the above argument,
we find that $g(t),$ $0\leq t \leq\xi,$ is an isotropic longest arc in $(\GL^+(2,\mathbb{C}),d).$
	
To prove the last statement, we can assume that
\begin{equation}
\label{cond1} 
g=e^{\xi/2}g_1,\,\,\mbox{where } g_1\in \SL(2,\mathbb{C}),\,\, 0< \xi < \eta:=\rho(e,g_1).
\end{equation}
Suppose that there is a future directed nonspacelike horizontal (Lipschitzian)
curve joining $e$ and $g.$ We can choose a parameterization of this curve such that it has the form
$g(t)=e^{t/2}\gamma(t),$ $0\leq t\leq \xi,$ where $\gamma(t),$ $0\leq t\leq \xi,$ is a Lipschitzian curve in $(\SL(2,\mathbb{C}),d)$.

Then $g'(t)=dl_{g(t)}(e_0+v(t)),$ where 
$v(t)=\sum_{i=1}^3v_i(t)e_i$ is a measurable vector function, defined almost everywhere, with a bounded from above scalar square $(v(t),v(t)).$
Since the curve is nonspacelike, we have almost everywhere 
$$\langle e_0+v(t), e_0+v(t)\rangle = 1-(v(t),v(t))\geq 0.$$ 
Then the inequality $(v(t),v(t))\leq 1$ holds almost everywhere. 
But we have $\gamma'(t)=dl_{\gamma(t)}(v(t))$ for almost all $t\in [0,\xi].$
Therefore, the length of the curve $\gamma(t),$ $0\leq t\leq \xi,$ joining the points $e$ and $g_1$ in $(\SL(2,\mathbb{C}),\rho),$ is equal to $l=\int_0^{\xi}\sqrt{(v(t),v(t))}dt\leq \xi.$
This contradicts the inequalities in (\ref{cond1}).  	
\end{proof}

\begin{corollary}
\label{cor}	
Every future directed nonspacelike curve $e^{t/2}\gamma(t),$ $t\in\mathbb{R}$, where $\gamma(t),$ $t\in\mathbb{R},$ is
a geodesic in $(\SL(2,\mathbb{C}),\rho)$ parametrized proportionally to the arclength, is a geodesic in $(\GL^+(2,\mathbb{C}),d),$ parametrized proportionally to the arclength for a timelike curve.
\end{corollary}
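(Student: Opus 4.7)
\medskip

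\noindent\textbf{Proof proposal.} My plan is to verify the local longest-arc property directly from Theorem \ref{slgeod}, using left-invariance of $d$ on $\GL^+(2,\mathbb{C})$ and of $\rho$ on $\SL(2,\mathbb{C})$.

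First, fix $t_0\in\mathbb{R}$ and consider a subsegment $g([t_0, t_0+s_0])$ with $s_0>0$ small. By left-invariance of $d$, translating by $g(t_0)^{-1}$ reduces the claim to showing that
$$h(s) := g(t_0)^{-1} g(t_0+s) = e^{s/2}\,\tilde\gamma(s),\quad s\in [0,s_0],$$
where $\tilde\gamma(s) := \gamma(t_0)^{-1}\gamma(t_0+s)$, is a longest arc in $(\GL^+(2,\mathbb{C}),d)$ from $e$ to $h(s_0)$. By left-invariance of $\rho$, $\tilde\gamma$ is again a geodesic of $(\SL(2,\mathbb{C}),\rho)$ parametrized proportionally to arclength, starting at $e$, with the same speed $\alpha\ge 0$ as $\gamma$.

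Second, I would check the hypotheses of Theorem \ref{slgeod}. With $u(s):=dl_{\tilde\gamma(s)^{-1}}(\tilde\gamma'(s))\in H_0$, a direct computation gives $dl_{h(s)^{-1}}(h'(s))=e_0+u(s)$, hence
$$\langle h'(s), h'(s)\rangle = 1-(u(s),u(s)) = 1-\alpha^2.$$
The nonspacelike assumption on $g$ (and hence on $h$) forces $\alpha\le 1$. Since $\tilde\gamma$ is a geodesic parametrized proportionally to arclength, for all sufficiently small $s_0>0$ the arc $\tilde\gamma|_{[0,s_0]}$ is a shortest arc in $(\SL(2,\mathbb{C}),\rho)$; setting $g_1:=\tilde\gamma(s_0)$, $\xi:=s_0$, $\eta:=\rho(e,g_1)=\alpha s_0$, we obtain $h(s_0)=e^{\xi/2}g_1$ with $\xi\ge \eta$, which is exactly condition (\ref{cond0}) (assuming $\alpha>0$, so $\eta>0$).

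Third, I would apply Theorem \ref{slgeod}. In the isotropic case $\alpha=1$ we have $\xi=\eta=s_0$ and $\tilde\gamma$ is parametrized by arclength, so $h$ already has the exact isotropic form in the theorem and is an isotropic longest arc. In the timelike case $0<\alpha<1$, pick the unique $c>0$ with $\tanh c=\alpha$ and write $\tilde\gamma(s)=\gamma_1(\alpha s)$, where $\gamma_1$ is the arclength reparametrization; the substitution $t:=s/{\rm ch}(c)$ yields
$$h(s(t)) = e^{{\rm ch}(c)t/2}\,\gamma_1({\rm sh}(c)t),\quad t\in[0,s_0/{\rm ch}(c)],$$
which is precisely the timelike longest-arc formula of Theorem \ref{slgeod} with $k=s_0/{\rm ch}(c)$, $\xi=s_0$, $\eta=\alpha s_0$. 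Hence $h$ is a timelike longest arc. A symmetric argument handles left subsegments, so $g$ is locally a longest arc, i.e.\ a geodesic of $(\GL^+(2,\mathbb{C}),d)$. Finally, in the timelike case the identity $\langle g'(t),g'(t)\rangle=1-\alpha^2$ is constant, so $g$ is parametrized proportionally to arclength.

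The main obstacle I expect is the parametrization bookkeeping: identifying $h$ (a curve with coefficient $1/2$ in the exponent) with the two-parameter form $e^{{\rm ch}(c)t/2}\gamma({\rm sh}(c)t)$ of Theorem \ref{slgeod} requires the rescaling $c=\mathrm{arctanh}\,\alpha$ and a change of variable. Once this matching is made, the substantive geometric content, namely that $h$ is a longest arc on short intervals, is delivered entirely by Theorem \ref{slgeod}.
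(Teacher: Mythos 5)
Your argument is correct and is exactly the route the paper intends: Corollary \ref{cor} is stated without proof as an immediate consequence of Theorem \ref{slgeod}, and your localization by left-invariance (using that $e^{t/2}I$ is central, so $g(t_0)^{-1}g(t_0+s)=e^{s/2}\tilde\gamma(s)$) together with the reparametrization $\tanh c=\alpha$ supplies precisely the bookkeeping the paper leaves implicit. The only loose ends are cosmetic: the degenerate case $\alpha=0$ falls outside condition (\ref{cond0}) and should be referred to Proposition \ref{slpr} instead, and to obtain a genuine two-sided neighbourhood of $t_0$ you should run your argument once from the left endpoint $t_0-\epsilon$ of an interval on which $\gamma$ is a shortest arc, rather than gluing two one-sided longest arcs (which does not by itself yield a longest arc on the union).
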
		

The following proposition is proved in a similar way to Theorem \ref{slgeod}.

\begin{proposition}
\label{slpr}
Any segment of 1-parameter subgroup $g(t)=2e^{t/2}e_0,$ parametrized by arclength, is a timelike longest arc in the space $(\GL^+( 2,\mathbb{C}),d).$
\end{proposition}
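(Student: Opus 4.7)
The plan is to adapt the argument from the last part of Theorem \ref{slgeod}, which excluded exactly the degenerate case $g \in \mathbb{R}_+ I$ that appears here. First I would verify that the curve $g(t) = 2e^{t/2}e_0 = e^{t/2}I$ really is a parametrized by arclength, future directed timelike 1-parameter subgroup: its left-invariant velocity is $dl_{g(t)^{-1}}g'(t) \equiv e_0 \in H$, so $\langle e_0, e_0\rangle = 1$ and $\langle e_0, v\rangle = 1 > 0$. By the left invariance of the sub-Lorentzian antimetric $d$, the statement reduces to showing $d(e, e^{T/2}I) = T$ for every $T > 0$, since $g(s)^{-1}g(t) = e^{(t-s)/2}I$.

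The achievability of length $T$ is immediate from the curve $g$ itself, so the crux is the upper bound. Using Proposition \ref{iso}, any future directed nonspacelike horizontal curve $h(t)$, $0 \le t \le T$, with $h(0) = e$ and $h(T) = e^{T/2}I$ can be written uniquely as $h(t) = e^{a(t)/2}\gamma(t)$ with $\gamma(t) \in \SL(2,\mathbb{C})$, $a(0) = 0$, $\gamma(0) = e$, $a(T) = T$, $\gamma(T) = e$ (so $\gamma$ is a loop at $e$). A direct computation, identical to the one used in the proof of Theorem \ref{slgeod}, gives
$$dl_{h(t)^{-1}}h'(t) = a'(t) e_0 + v(t), \qquad v(t) := dl_{\gamma(t)^{-1}}\gamma'(t).$$
Horizontality of $h$ and the fact that $v(t) \in \mathfrak{sl}(2,\mathbb{C})$ force $v(t) \in H_0$; hence
$$\langle h'(t), h'(t)\rangle = a'(t)^2 - (v(t), v(t)), \qquad \langle dl_{h(t)^{-1}}h'(t), e_0\rangle = a'(t).$$

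The future directed nonspacelike condition therefore gives $a'(t) \ge \sqrt{(v(t), v(t))} \ge 0$ almost everywhere, and integrating yields
$$L(h) = \int_0^T \sqrt{a'(t)^2 - (v(t), v(t))}\, dt \le \int_0^T a'(t)\, dt = a(T) - a(0) = T.$$
Taking the supremum over all such $h$ gives $d(e, e^{T/2}I) \le T$, and the 1-parameter subgroup realizes equality. By left invariance, any segment of $g$ of parameter length $\Delta$ has $d$-length exactly $\Delta$, so it is a timelike longest arc in $(\GL^+(2,\mathbb{C}), d)$.

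The computation itself is routine; the only substantive point is the recognition that the case $g \in \mathbb{R}_+ I$ (i.e., $\eta = 0$ in the notation of Theorem \ref{slgeod}) is not covered by that theorem's hypothesis $\eta > 0$, so the argument must be run independently. No genuine obstacle arises because the same inequality $L(h) \le \int a'(t)\, dt$ that drove Theorem \ref{slgeod} suffices here, with the $\SL(2,\mathbb{C})$ component degenerating to a loop rather than a nontrivial shortest arc.
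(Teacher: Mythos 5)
Your proof is correct and follows essentially the same route as the paper, which simply states that Proposition \ref{slpr} ``is proved in a similar way to Theorem \ref{slgeod}'': you carry out exactly that adaptation, using the splitting $h(t)=e^{a(t)/2}\gamma(t)$ from Proposition \ref{iso} and the inequality $L(h)\leq\int a'(t)\,dt=T$, with the $\SL(2,\mathbb{C})$ factor degenerating to a loop. The only cosmetic point is that an arbitrary competitor curve need not be parametrized on $[0,T]$, but since the length bound only uses $a(\mathrm{endpoint})-a(0)=T$, nothing changes.
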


\section{Sub-Lorentzian normal nonspacelike extremals on $\GL^{+}(2,\mathbb{C})$}

In Theorem \ref{slgeod} and Proposition \ref{slpr}, were precisely established the elements $g$ of the Lie group $\GL^+(2,\mathbb{C}),$ for which there exists a future directed nonspacelike sub-Lorentzian longest arc with the origin  $e$ and the endpoint $g$.

Moreover, the length $d(e,g)$ of the longest arc is found. 

Therefore, we can apply Theorem \ref{main} to search for normal timelike extremals and thereby geodesics and possibly the other longest arcs.

\begin{theorem}
\label{main2}
Each normal timelike extremal $g(t)$, $t\in\mathbb{R}$, of sub-Lorentzian (anti)metric $d$ on
$(\GL^{+}(2,\mathbb{C}),d)$  with origin $g(0)=e$ is a product of two 1-parameter
subgroups:
\begin{equation}
\label{geod}
g(t)=\exp\left(t\sum\limits_{i=0}^6\alpha_ie_i\right)\exp\left(-t\sum\limits_{i=4}^6\alpha_{i}e_i\right),
\end{equation}
where $\alpha_i$, $i=0,\dots,6$, are some arbitrary constants such that
\begin{equation}
\label{norm}
\alpha_0=\sqrt{1+\alpha_1^2+\alpha_2^2+\alpha_3^2}.
\end{equation}
\end{theorem}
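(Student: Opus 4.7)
The plan is to exploit the centrality of $e_0=I/2$ in $\mathfrak{gl}^+(2,\mathbb{C})$ in order to split any normal timelike extremal into the product of a central 1-parameter subgroup $\exp(\alpha_0 t e_0)$ and a normal sub-Riemannian geodesic in $\SL(2,\mathbb{C})$, and then quote Proposition~\ref{subrgeod}. Apply Theorem~\ref{main} with $r=3$, since $\mathfrak{p}=H$ has the orthonormal basis $e_0,e_1,e_2,e_3$. Because $e_0=I/2$ commutes with every element of $\mathfrak{gl}^+(2,\mathbb{C})$, the structure constants $C^k_{0j}$ and $C^k_{j0}$ all vanish. The adjoint equation~(\ref{equat1}) with $j=0$ therefore collapses to $\psi_0'(t)\equiv 0$, so $\psi_0(t)\equiv \alpha_0$ is a positive constant; the remaining equations for $j=1,\dots,6$ reduce to
\begin{equation*}
\psi_j'(t)=-\sum_{i=1}^{3}\sum_{k=1}^{6}C^k_{ij}\,\psi_i(t)\psi_k(t),
\end{equation*}
which involve only $\psi_1,\dots,\psi_6$ and coincide with the Pontryagin adjoint system for normal sub-Riemannian geodesics on $(\SL(2,\mathbb{C}),\rho)$ with horizontal subspace $H_0$, as derived in~\cite{BerZub,Ber14}.

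Using centrality of $e_0$, write $g(t)=\exp(\alpha_0 t e_0)\,\tilde g(t)$. A direct differentiation gives $\tilde g(0)=e$, $\tilde g(t)\in\SL(2,\mathbb{C})$ for all $t$, and $\tilde g'(t)=dl_{\tilde g(t)}(w(t))$ with $w(t)=-\sum_{i=1}^{3}\psi_i(t)e_i\in H_0$. The normalization $\|u(t)\|^2=\alpha_0^{2}-\sum_{i=1}^{3}\psi_i(t)^{2}=1$ from Theorem~\ref{main} yields the conservation $(w(t),w(t))=c^2$, where $c:=\sqrt{\alpha_0^2-1}\ge 0$. Hence $\tilde g$ is a normal sub-Riemannian extremal of $(\SL(2,\mathbb{C}),\rho)$ emanating from $e$, parametrized proportionally to arclength with factor $c$.

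If $c=0$, then $w\equiv 0$ (since $(\cdot,\cdot)$ is positive definite on $H_0$), so $g(t)=\exp(te_0)$, which matches (\ref{geod})--(\ref{norm}) with $\alpha_1=\alpha_2=\alpha_3=0$ (the values of $\alpha_4,\alpha_5,\alpha_6$ being immaterial because the two factors cancel). If $c>0$, then Proposition~\ref{subrgeod} applied to the arclength reparametrization of $\tilde g$ supplies constants $\beta_1,\dots,\beta_6$ with $\beta_1^2+\beta_2^2+\beta_3^2=1$ and
\begin{equation*}
\tilde g(t)=\exp\!\Bigl(ct\sum_{i=1}^{6}\beta_i e_i\Bigr)\exp\!\Bigl(-ct\sum_{i=4}^{6}\beta_i e_i\Bigr).
\end{equation*}
Setting $\alpha_i:=c\beta_i$ for $i=1,\dots,6$ and pushing $\exp(\alpha_0 t e_0)$ into the first factor via centrality of $e_0$ produces exactly (\ref{geod}), while $\alpha_1^2+\alpha_2^2+\alpha_3^2=c^2=\alpha_0^2-1$ gives (\ref{norm}). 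The main obstacle is the identification carried out in the first paragraph: one must verify that the reduced equations for $\psi_1,\dots,\psi_6$ really are the normal sub-Riemannian Pontryagin system on $\SL(2,\mathbb{C})$, which requires a careful structure-constant bookkeeping against the relations (\ref{b1})--(\ref{b4}). Once this identification is secured, the rest of the argument is straightforward factorization followed by invocation of Proposition~\ref{subrgeod}.
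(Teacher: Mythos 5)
Your proposal is correct and follows essentially the same route as the paper's own proof: vanishing of the structure constants involving the central element $e_0$ (the paper invokes Proposition~\ref{iso}, you invoke centrality of $e_0=I/2$ directly, which is the same fact), constancy of $\psi_0\equiv\alpha_0$, identification of the reduced adjoint system with the sub-Riemannian Pontryagin system on $\SL(2,\mathbb{C})$ (the paper handles the sign bookkeeping you flag by the substitution $\tilde v=-v$, $\tilde\psi_j=-\psi_j$), the factorization $g(t)=e^{\alpha_0 t/2}\gamma(t)$, and finally Proposition~\ref{subrgeod}.
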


\begin{proof}
Due to Theorem \ref{main}, the considered extremal is a solution to the system (\ref{equat}), (\ref{equat1}) for $n=6,$ $r=3$.	
Set 
$$\psi_0(0)=\alpha_0,\, \psi_i(0)=-\alpha_i,\, 1,\dots,6,\, v(t)=u(t)-u_0(t)e_0=\psi(t)-\psi_0(t)e_0:=\varphi(t).$$	
It follows from Proposition \ref{iso} that $C^k_{0j}=0,$ $C^0_{ij}=0,$ $C^k_{i0}=0.$ Then $\psi_0\equiv \alpha_0,$ 
\begin{equation}
\label{equat2}
\psi_j'(t)=-\sum\limits_{k=1}^{6}\sum\limits_{i=1}^{3}C_{ij}^{k}\psi_i\psi_k,\,\,j=1,\dots,6. 
\end{equation}
Now (\ref{min}) and (\ref{equat}) imply (\ref{norm}), $\psi_0(t)\equiv \alpha_0=\rm{ch}c$,
\begin{equation}
\label{v}	
\langle v(t),v(t)\rangle=-(v(t),v(t))=-(\varphi(t),\varphi(t))\equiv -(\alpha_1^2+\alpha_2^2+\alpha_3^2)=-\rm{sh}^2c,\, c\geq 0.
\end{equation}

If $c=0$ then $u_0(t)=\psi_0(t)\equiv\alpha_0=1,$ $u_i(t)=\psi_i(t)\equiv -\alpha_i=0,$ $i=1,2,3,$ and due to the first formula in (\ref{equat}),
$g(t)=2e^{t/2}e_0$ has the form (\ref{geod}).

Let $c>0.$ Set
\begin{equation}
\label{set}	
\tilde{v}(t)=-v(t),\, \widetilde{\psi}_j(t)=-\psi_j(t),\, j=1,\dots,6;\, \tilde{u}_i(t)=\widetilde{\psi}_i(t),\, i=1,2,3.
\end{equation}
Then 
\begin{equation}
\label{equat3}
\widetilde{\psi}_j'(t)=\sum\limits_{k=1}^{6}\sum\limits_{i=1}^{3} C_{ij}^{k}\widetilde{\psi}_i\widetilde{\psi}_k,\,\,j=1,\dots,6;\, \tilde{v}(t)=\sum_{i=1}^3\tilde{u}_i(t)e_i,\, (\tilde{v}(t),\tilde{v}(t))=\rm{sh}^2c. 
\end{equation}	
Due to (\ref{set}), (\ref{equat3}), there is a normal sub-Riemannian extremal--geodesic $\gamma(t),$ $t\in\mathbb{R},$ such that $\gamma(0)=e,$ $\gamma'(t)=dl_{\gamma(t)}(\tilde{v}(t)),$ $t\in\mathbb{R},$ in $(\SL( 2,\mathbb{C}),\rho)$, parametrized proportionally to the arclength with the multiplier $\rm{sh}c.$ As a corollary of the above, Propositions \ref{subrgeod} and \ref{iso}, we get
$$g(t)=2e^{\rm{ch}(c)/2}e_0\gamma(t)=2e^{\rm{ch}(c)/2}e_0\exp\left(t\sum\limits_{i=1}^{6}\alpha_ie_i\right)\exp\left(-t\sum\limits_{i=4}^6\alpha_ie_i\right)=$$
$$\exp\left(t\sum\limits_{i=0}^{6}\alpha_ie_i\right)\exp\left(-t\sum\limits_{i=4}^6\alpha_ie_i\right),\, t\in \mathbb{R}.$$
\end{proof}

Taking into account Theorem \ref{isotrn}, we prove similarly to Theorem \ref{main2} the following

\begin{theorem}
\label{main3}
Each normal isotropic extremal $g(t)$, $t\in\mathbb{R}$, of sub-Lorentzian (anti)me\-tric $d$ on
$\GL^{+}(2,\mathbb{C})$ with origin  $g(0)=e$ is a product of two 1-parameter
subgroups:
$$g(t)=\exp\left(t\sum\limits_{i=0}^6\alpha_ie_i\right)\exp\left(-t\sum\limits_{i=4}^6\alpha_ie_i\right),$$
where $\alpha_i$, $i=0,\dots,6$,  are some arbitrary constants such that
\begin{equation}
\label{norm1}
\alpha_0=\sqrt{\alpha_1^2+\alpha_2^2+\alpha_3^2}=1.
\end{equation}
\end{theorem}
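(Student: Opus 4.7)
The plan is to run the argument of Theorem \ref{main2} with the timelike normalization $\|u\|=1$ replaced by the isotropic normalization $\psi_0\equiv 1$, $\|u(0)\|=0$ dictated by Theorem \ref{isotrn}. First I would apply Theorem \ref{isotrn} with $n=6$, $r=3$ to the extremal $g(t)$, obtaining $u(t)=e_0-\sum_{i=1}^{3}\psi_i(t)e_i$, the conjugate equations (\ref{isoequat1}), and $\|u(0)\|=0$, which translates into $\psi_1(0)^2+\psi_2(0)^2+\psi_3(0)^2=1$.

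Next I would invoke Proposition \ref{iso}: since $\mathfrak{gl}^+(2,\mathbb{C})=\mathbb{R}\cdot e_0\oplus\mathfrak{sl}(2,\mathbb{C})$ as a Lie algebra, the vector $e_0=I/2$ is central and every structure constant $C_{0j}^k$, $C_{ij}^0$, $C_{i0}^k$ vanishes. The system (\ref{isoequat1}) collapses to
$$\psi_j'(t)=-\sum_{k=1}^{6}\sum_{i=1}^{3}C_{ij}^{k}\psi_i(t)\psi_k(t),\qquad j=1,\dots,6.$$
After the sign flip $\widetilde\psi_j:=-\psi_j$ used in the proof of Theorem \ref{main2}, this becomes the adjoint ODE for a normal sub-Riemannian extremal on $(\SL(2,\mathbb{C}),\rho)$ with horizontal control $\widetilde u:=\sum_{i=1}^{3}\widetilde\psi_i e_i\in H_0$ of unit speed, $(\widetilde u(0),\widetilde u(0))=\alpha_1^2+\alpha_2^2+\alpha_3^2=1$, where $\alpha_i:=\widetilde\psi_i(0)=-\psi_i(0)$.

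At this point Proposition \ref{subrgeod} gives the associated sub-Riemannian geodesic explicitly as
$$\gamma(t)=\exp\!\left(t\sum_{i=1}^{6}\alpha_i e_i\right)\exp\!\left(-t\sum_{i=4}^{6}\alpha_i e_i\right),$$
with $\alpha_1^2+\alpha_2^2+\alpha_3^2=1$. Since the $e_0$-component of $u$ is constantly equal to $1$ and $e_0$ is central, the trajectory splits as $g(t)=\exp(te_0)\gamma(t)$, and centrality of $e_0$ lets us absorb $\exp(te_0)$ into the first factor of $\gamma$, producing the stated form with $\alpha_0=1$. Together with $\alpha_1^2+\alpha_2^2+\alpha_3^2=1$ this is exactly the normalization (\ref{norm1}).

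The main obstacle is conceptual rather than computational: one must verify that the replacement of the timelike normalization by the isotropic one still leaves a nontrivial covector whose tail $(\psi_1,\dots,\psi_6)$ satisfies the same adjoint system as in the sub-Riemannian problem on $\SL(2,\mathbb{C})$, so that Proposition \ref{subrgeod} can be applied unchanged. Once this identification is justified, the factorization into a product of two exponentials and the normalization $\alpha_0=1$ follow immediately from the centrality of $e_0$, just as in Theorem \ref{main2}.
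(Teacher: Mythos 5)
Your proposal is correct and follows essentially the same route as the paper, which proves Theorem \ref{main3} by repeating the argument of Theorem \ref{main2} with the isotropic normalization $\psi_0\equiv 1$, $(v(t),v(t))\equiv 1$ from Theorem \ref{isotrn} in place of the timelike one: Proposition \ref{iso} kills the structure constants involving $e_0$, the sign-flipped conjugate system is the sub-Riemannian adjoint system on $\SL(2,\mathbb{C})$, Proposition \ref{subrgeod} gives the two-exponential form, and centrality of $e_0$ absorbs the factor $\exp(te_0)$ with $\alpha_0=1$. The only point worth making explicit is that $(\widetilde u(t),\widetilde u(t))=1$ holds for all $t$ (not just $t=0$) because the isotropic control region $W$ forces $\langle w(t),w(t)\rangle=-1$ identically, which is what lets you apply Proposition \ref{subrgeod} to an arclength-parametrized geodesic.
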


\begin{proposition}
If the vector $\{\alpha_4,\alpha_5,\alpha_6\}$ is collinear to the vector $\{\alpha_1,\alpha_2,\alpha_3\}$, 
then the timelike (and isotropic) normal extremal $g(t)$ given by the formula (\ref{geod}), is a 1-parameter subgroup:
$$g(t)=\exp\left(t\sum\limits_{i=0}^3\alpha_ie_i\right).$$
\end{proposition}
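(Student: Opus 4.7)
The plan is to show that under the collinearity hypothesis the two exponents in the product decomposition (\ref{geod}) commute, so the product collapses to a single exponential by the standard identity $\exp(tA)\exp(-tB)=\exp(t(A-B))$ valid whenever $[A,B]=0$.

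First I would introduce shorthand: let
$$A:=\sum_{i=0}^{6}\alpha_i e_i,\qquad B:=\sum_{i=4}^{6}\alpha_i e_i,\qquad P:=\sum_{i=1}^{3}\alpha_i e_i,$$
so that the extremal reads $g(t)=\exp(tA)\exp(-tB)$ and $A=\alpha_0 e_0+P+B$. Since $e_0=\sigma_0/2$ is a scalar matrix, it lies in the center of $\mathfrak{gl}^{+}(2,\mathbb{C})$, so $[\alpha_0 e_0,B]=0$ and also $[\alpha_0 e_0,P]=0$. Therefore the problem reduces to verifying $[P,B]=0$.

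Next I would use the definitional identity $e_{i+3}=\mathbf{i}\, e_i$ for $i=1,2,3$ from (\ref{e2}). Under the hypothesis $(\alpha_4,\alpha_5,\alpha_6)=\lambda(\alpha_1,\alpha_2,\alpha_3)$ for some $\lambda\in\mathbb{R}$, this yields
$$B=\sum_{i=1}^{3}\alpha_{i+3}e_{i+3}=\mathbf{i}\lambda\sum_{i=1}^{3}\alpha_i e_i=\mathbf{i}\lambda P.$$
Consequently $[P,B]=\mathbf{i}\lambda[P,P]=0$, and combined with the previous step $[A,B]=0$.

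From commutativity of $A$ and $B$, we have $\exp(tA)\exp(-tB)=\exp(t(A-B))$, and
$$A-B=\alpha_0 e_0+P=\sum_{i=0}^{3}\alpha_i e_i,$$
which gives the claimed expression $g(t)=\exp\!\bigl(t\sum_{i=0}^{3}\alpha_i e_i\bigr)$. No serious obstacle is expected: the only subtle point is noticing that collinearity, combined with the identification $e_{i+3}=\mathbf{i}e_i$, forces $B$ to be a complex scalar multiple of $P$, which is what guarantees the bracket vanishes. The isotropic case is handled identically; the only difference is the normalization (\ref{norm1}) in place of (\ref{norm}), which plays no role in the commutation argument.
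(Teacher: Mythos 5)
Your proof is correct and follows essentially the same route as the paper's: both arguments reduce to observing that under collinearity $\sum_{i=4}^{6}\alpha_ie_i=\mathbf{i}\lambda\sum_{i=1}^{3}\alpha_ie_i$ via $e_{i+3}=\mathbf{i}e_i$, so the two exponents commute (with $e_0$ central) and the product of exponentials collapses to a single one. The paper merely writes this out by substituting into the exponentials directly rather than phrasing it through the commutator $[A,B]=0$, but the content is identical.
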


\begin{proof}
Let $\alpha_4=a\alpha_1$, $\alpha_5=a\alpha_2$, $\alpha_6=a\alpha_3$ for some $a\in\mathbb{R}$.
Due to the form of the vector $e_0$, (\ref{e2}) and (\ref{geod}),
$$g(t)=e^{\alpha_0t/2}\exp\left(t(1+{\bf i}a)(\alpha_1e_1+\alpha_2e_2+\alpha_3e_3)\right)\exp\left(-{\bf i}ta(\alpha_1e_1+\alpha_2e_2+\alpha_3e_3)\right)$$
$$=e^{\alpha_0t/2}\exp(t(\alpha_1e_1+\alpha_2e_2+\alpha_3e_3))=\exp\left(t\sum\limits_{i=0}^3\alpha_ie_i\right).$$
\end{proof}

\begin{proposition}
\label{expon}
Set $A=\sum\limits_{i=0}^{3}\alpha_ie_i$, 
$\alpha_0,\dots,\alpha_3\in\mathbb{C}$, $w=\frac{1}{2}\sqrt{\alpha_1^2+\alpha_2^2+\alpha_3^2}$.
Then 
$$\exp(tA)=e^{\alpha_0 t/2}\left(2e_0+t\sum\limits_{i=1}^{3}\alpha_ie_i\right),\quad\mbox{if } w=0,$$ 
$$\exp(tA)=e^{\alpha_0t/2}\left(2{\rm ch}(wt)e_0+\frac{{\rm sh}(wt)}{w}\sum\limits_{i=1}^{3}\alpha_ie_i\right),\quad\mbox{if } w\neq 0.$$
\end{proposition}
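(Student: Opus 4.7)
The plan is to reduce the computation to a standard Pauli-matrix identity. First I would split off the scalar part: since $\alpha_0 e_0 = (\alpha_0/2)I$ is a scalar multiple of the identity, it commutes with every matrix, and in particular with
$$B := \sum_{i=1}^{3}\alpha_i e_i = \tfrac{1}{2}\sum_{i=1}^{3}\alpha_i\sigma_i.$$
Hence $\exp(tA) = \exp(t\alpha_0 e_0)\exp(tB) = e^{\alpha_0 t/2}\exp(tB)$, and it suffices to evaluate $\exp(tB)$.

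Next I would compute $B^2$ using the standard anticommutation relations $\sigma_i\sigma_j+\sigma_j\sigma_i=2\delta_{ij}\sigma_0$ for the Pauli matrices. Expanding
$$B^{2}=\tfrac{1}{4}\sum_{i,j=1}^{3}\alpha_i\alpha_j\sigma_i\sigma_j,$$
the off-diagonal terms cancel in pairs by anticommutation, while the diagonal terms yield $\tfrac{1}{4}(\alpha_1^2+\alpha_2^2+\alpha_3^2)\sigma_0 = w^{2}\sigma_0 = 2w^{2}e_0$. Equivalently, $B^2 = w^{2} I$.

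From this identity the two cases of the proposition follow immediately. If $w=0$, then $B^{2}=0$, so $B$ is nilpotent and $\exp(tB) = I+tB = 2e_0 + t\sum_{i=1}^{3}\alpha_i e_i$. If $w\neq 0$, then $(B/w)^{2}=I$, so in the power series $\exp(tB)=\sum_{n\geq 0}(tB)^{n}/n!$ the even powers collapse to $(tw)^{2k}I/(2k)!$ and the odd powers to $(tw)^{2k}(tB)/(2k+1)!$, giving
$$\exp(tB)=\cosh(wt)\,I+\frac{\sinh(wt)}{w}B = 2\cosh(wt)e_0+\frac{\sinh(wt)}{w}\sum_{i=1}^{3}\alpha_i e_i.$$
Multiplying by $e^{\alpha_0 t/2}$ yields both formulas stated in the proposition.

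There is no real obstacle here; the entire argument rests on the single algebraic identity $B^{2}=w^{2}I$, which is a direct consequence of the Clifford-type relations for the Pauli matrices encoded in \eqref{sigma} and in the structure equations \eqref{b1}--\eqref{b4}. The only minor care needed is the bookkeeping $I=2e_0$ to express the final answer in the chosen basis $\{e_0,e_1,e_2,e_3\}$.
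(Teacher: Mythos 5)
Your proof is correct, and it is exactly the standard computation the paper implicitly relies on: the proposition is stated without proof, but the identities $e_i^2=\frac{e_0}{2}$ and \eqref{ll} invoked in the proof of Corollary \ref{form} encode precisely the anticommutation relations from which your key identity $B^2=w^2I$ follows. One small point worth making explicit is that since the $\alpha_i$ are complex, $w$ is only defined up to sign, but this is harmless because $\cosh(wt)$ and $\sinh(wt)/w$ are both even functions of $w$, so your power-series argument is insensitive to the choice of branch.
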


\begin{corollary}
\label{form}
Set 
$$w_1=\frac{1}{2}\sqrt{(\alpha_1+{\bf i}\alpha_4)^2+(\alpha_2+{\bf i}\alpha_5)^2+(\alpha_3+{\bf i}\alpha_6)^2},\quad w_2=\frac{1}{2}\sqrt{\alpha_4^2+\alpha_5^2+\alpha_6^2};$$
$$m_1={\rm ch}(w_1t),\quad n_1=\left\{\begin{array}{cc}
{\rm sh}(w_1t)/w_1, & \mbox{if }\, w_1\neq 0, \\
t, & \mbox{if }\, w_1=0,
\end{array}\right.;$$
$$m_2=\cos(w_2t),\quad  n_2=\left\{\begin{array}{cc}
{\rm sin}(w_2t)/w_2, & \mbox{if }\, w_2\neq 0, \\
t, & \mbox{if }\, w_2=0,
\end{array}\right..$$
Then the normal nonspacelike, parametrized by arclength in the timelike case,  extremal $g(t)$, $t\in\mathbb{R}$, of
sub-Lorentzian (anti)metric $d$ on
$\GL^{+}(2,\mathbb{C})$ (see Theorems \ref{main2}, \ref{main3}) is equal to
$$g(t)=\sum\limits_{i=0}^{6}c_i(t)e_i+c_7(t){\bf i}e_0,$$
where
$$c_0(t)=2e^{\alpha_0t/2}(m_1m_2+n_1n_2w_2^2),\quad c_7(t)=-\frac{e^{\alpha_0t/2}}{2}n_1n_2(\alpha_1\alpha_4+\alpha_2\alpha_5+\alpha_3\alpha_6),$$
$$c_1(t)=\frac{e^{\alpha_0t/2}}{2}n_1(2\alpha_1m_2+(\alpha_3\alpha_5-\alpha_2\alpha_6)n_2),\quad c_2(t)=\frac{e^{\alpha_0t/2}}{2}n_1(2\alpha_2m_2+(\alpha_1\alpha_6-\alpha_3\alpha_4)n_2),$$
$$c_3(t)=\frac{e^{\alpha_0t/2}}{2}n_1(2\alpha_3m_2+(\alpha_2\alpha_4-\alpha_1\alpha_5)n_2),\quad
c_i(t)=e^{\alpha_0t/2}(m_2n_1-m_1n_2)\alpha_i,\quad i=4,5,6.$$
\end{corollary}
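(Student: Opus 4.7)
The plan is to multiply out the two matrix exponentials produced by Theorems \ref{main2} and \ref{main3}, using Proposition \ref{expon} to evaluate each factor in closed form. First I would rewrite the two exponents in the complex basis $e_0, e_1, e_2, e_3$ via (\ref{e2}):
$$\sum_{i=0}^{6}\alpha_i e_i = \alpha_0 e_0 + \sum_{j=1}^{3}(\alpha_j + \mathbf{i}\alpha_{j+3})e_j, \qquad -\sum_{i=4}^{6}\alpha_i e_i = -\mathbf{i}\sum_{j=1}^{3}\alpha_{j+3}e_j.$$
Plugging the first into Proposition \ref{expon} yields the first matrix factor with the stated $m_1, n_1$ and parameter $w_1$ exactly as defined. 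For the second, the relevant parameter of Proposition \ref{expon} equals $\mathbf{i}w_2$; using $\cosh(\mathbf{i}w_2 t) = \cos(w_2 t) = m_2$ and $\sinh(\mathbf{i}w_2 t)/(\mathbf{i}w_2) = \sin(w_2 t)/w_2 = n_2$, this second exponential equals $2 m_2 e_0 - \mathbf{i} n_2 \sum_{j=1}^{3}\alpha_{j+3}e_j$. The degenerate cases $w_1 = 0$ or $w_2 = 0$ are covered by the alternative formula in Proposition \ref{expon}.

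Next I would multiply these two $(2\times 2)$-matrices. Writing $2e_i = \sigma_i$ and using the Pauli identity $\sigma_i \sigma_j = \delta_{ij}\sigma_0 - \mathbf{i}\epsilon_{ijk}\sigma_k$ for $i,j\in\{1,2,3\}$ (whose sign is dictated by the paper's convention (\ref{sigma}) for $\sigma_2$ and is consistent with (\ref{b1})--(\ref{b3})), the product splits into four pieces: a scalar $m_1 m_2$ contribution to $\sigma_0$; two linear contributions producing $m_2 n_1(\alpha_k + \mathbf{i}\alpha_{k+3})$ and $-\mathbf{i}m_1 n_2 \alpha_{k+3}$ in each $\sigma_k$; and a bilinear contribution whose diagonal part $\sigma_j^2 = \sigma_0$ supplies the $n_1 n_2 w_2^2$ and $S := \alpha_1\alpha_4 + \alpha_2\alpha_5 + \alpha_3\alpha_6$ terms of $c_0, c_7$, while its off-diagonal part supplies the cross-products $\alpha_j \alpha_{k+3} - \alpha_k \alpha_{j+3}$ in $c_1, c_2, c_3$ through $\epsilon_{ijk}$. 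Separating real and imaginary parts of the complex coefficient of each $e_i$ ($i = 0, \dots, 3$) yields $c_0, c_1, c_2, c_3$ and $c_7, c_4, c_5, c_6$ respectively; the overall factor $e^{\alpha_0 t/2}$ is supplied solely by the first matrix factor.

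The main obstacle is strictly sign bookkeeping: tracking the $-\mathbf{i}$ from the rewriting of the second exponent, the $-\mathbf{i}$ in the Pauli identity (opposite to the usual physics convention, due to the paper's sign choice in $\sigma_2$), and the signs of $\epsilon_{ijk}$. I would verify the coefficient of $e_1$ in full detail, showing its real part reduces to $m_2 n_1 \alpha_1 + \tfrac{1}{2} n_1 n_2 (\alpha_3 \alpha_5 - \alpha_2 \alpha_6)$ so as to match the stated $c_1$; then $c_2, c_3$ follow by cyclic permutation of the indices $1,2,3$, and $c_4, c_5, c_6$ come from the imaginary parts of the same expressions.
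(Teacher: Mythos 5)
Your proposal is correct and follows essentially the same route as the paper: both evaluate the two exponential factors via (\ref{e2}) and Proposition \ref{expon} (yielding $e^{\alpha_0 t/2}(2m_1e_0+n_1\sum_{i=1}^{6}\alpha_ie_i)$ and $2m_2e_0-n_2\sum_{i=4}^{6}\alpha_ie_i$) and then multiply the matrices using the Clifford relations $e_i^2=e_0/2$ and the products of the $e_i$, which your Pauli identity $\sigma_i\sigma_j=\delta_{ij}\sigma_0-\mathbf{i}\epsilon_{ijk}\sigma_k$ merely repackages. Your sign conventions and the sample check of $c_1$ agree with the stated formulas.
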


\begin{proof}
Due to (\ref{e2}) and Proposition \ref{expon},
$$\exp\left(t\sum\limits_{i=0}^6\alpha_ie_i\right)=e^{\alpha_0t/2}\left(2m_1e_0+n_1\sum\limits_{i=1}^{6}\alpha_ie_i\right);$$
\begin{equation}
\label{hh}
\exp\left(-t\sum\limits_{i=4}^6\alpha_ie_i\right)=\exp\left(t\sum\limits_{i=4}^6(-{\bf i}\alpha_i)e_{i-3}\right)
=2m_2e_0-n_2\sum\limits_{i=4}^6\alpha_ie_i.
\end{equation}
On the base of Theorems \ref{main2}, \ref{main3}, it remains to multiply these matrix exponents, taking into account (\ref{e2}) and the equalities
$e_i^2=\frac{e_0}{2},$ $i=0,1,2,3$,
\begin{equation}
\label{ll}
e_1\cdot e_2=-e_2\cdot e_1=-\frac{e_6}{2},\quad e_1\cdot e_3=-e_3\cdot e_1=\frac{e_5}{2},\quad e_2\cdot e_3=-e_3\cdot e_2=-\frac{e_4}{2}.
\end{equation}
\end{proof}

\begin{proposition}
\label{isom}
The action of the Lie subgroup $\SU(2)\subset\GL^+(2,\mathbb{C})$ on the Lie group $\GL^+(2,\mathbb{C})$ by conjugations is a subgroup of internal automorphisms of the group
$\GL^+(2,\mathbb{C})$ and defines an isometry subgroup with respect to the left-invariant sub-Lorentzian (anti)metric $d$ on $\GL^+(2,\mathbb{C})$
defined by the pair $(H,\langle\cdot,\cdot\rangle).$
Adjoint action $\Ad(\SU(2))$ of the group $\SU(2)$ on the Lie algebra $\mathfrak{gl}^+(2,\mathbb{C})$ defines
an automorphism subgroup of this algebra, transforming $H$ and $\mathfrak{su}(2)$ into itself, and an isometry subgroup of the space $(H,\langle\cdot,\cdot\rangle),$ acting transitively on $\{w\in H_0\mid \langle w,w\rangle=-1\}.$
\end{proposition}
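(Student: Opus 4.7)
The strategy is to reduce each claim to an algebraic property of $\Ad(U)$ for $U\in\SU(2)$ and then transfer this to the group via the left-invariance of $d$. First, the map $\conj_U(g):=UgU^{-1}$ is by definition an inner automorphism of $\GL^+(2,\mathbb{C})$, and its differential at $e$ is the Lie-algebra automorphism $\Ad(U)$. Using $U^{-1}=U^{\ast}$ for $U\in\SU(2)$, one checks $(UhU^{-1})^{\ast}=UhU^{-1}$ for any $h=h^{\ast}\in H$, so $\Ad(U)H\subseteq H$; an analogous calculation with skew-Hermitian matrices gives $\Ad(U)\mathfrak{su}(2)\subseteq\mathfrak{su}(2)$. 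Moreover $\Ad(U)(e_0)=U(I/2)U^{-1}=I/2=e_0$, so the time-orientation vector is fixed.

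Next I would verify that $\Ad(U)$ preserves $\langle\cdot,\cdot\rangle$ on $H$. Since $\langle h,h\rangle=4\det h$ on $H$ and $\det(UhU^{-1})=\det h$ for $U\in\SU(2)$, the quadratic form and hence, by polarization, the associated bilinear form are $\Ad(U)$-invariant. Together with the preservation of $H$, of $\mathfrak{su}(2)$, and of $e_0$, this already proves the Lie-algebraic part of the proposition and the claim that $\Ad(\SU(2))$ acts on $(H,\langle\cdot,\cdot\rangle)$ by isometries.

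To show that each $\conj_U$ is an isometry of $(\GL^+(2,\mathbb{C}),d)$, take any future-directed timelike horizontal curve $g\colon[0,a]\to\GL^+(2,\mathbb{C})$ from $g_0$ to $g_1$, with $g'(t)=dl_{g(t)}(u(t))$, $u(t)\in H$. Set $\tilde g(t):=Ug(t)U^{-1}$. Differentiating gives $\tilde g'(t)=Ug(t)u(t)U^{-1}=dl_{\tilde g(t)}(\Ad(U)u(t))$, so the transformed control is $\tilde u(t)=\Ad(U)u(t)$. By the algebraic facts above, $\tilde u(t)\in H$, with $\langle\tilde u,\tilde u\rangle=\langle u,u\rangle$ and $\langle\tilde u,e_0\rangle=\langle u,e_0\rangle$; hence $\tilde g$ is again future-directed timelike horizontal and has the same length as $g$. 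Since $\conj_U$ is a bijection on the class of admissible curves between conjugated endpoints, taking suprema in the definition of $d$ yields $d(Ug_0U^{-1},Ug_1U^{-1})=d(g_0,g_1)$, and the same argument applied to isotropic curves handles the remaining case of (\ref{co}) where $\Omega_{g_0g_1}$ is empty on both sides simultaneously.

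For the transitivity on $\{w\in H_0\mid\langle w,w\rangle=-1\}$, such a $w$ is a traceless Hermitian matrix with $\det w=-1/4$, hence with eigenvalues $\pm 1/2$. Pick an orthonormal eigenbasis of $\mathbb{C}^2$ and rescale one of its vectors by a unit phase so that the resulting change-of-basis matrix $U$ has determinant $1$, i.e.\ $U\in\SU(2)$; then $\Ad(U)w=e_3$, and every such $w$ lies in a single $\Ad(\SU(2))$-orbit. The whole argument is essentially algebraic, and the only mildly delicate step is the third paragraph, where one must correctly rewrite the derivative of the conjugated curve in the left-invariant frame so that the metric claim follows cleanly from the algebraic invariances of $\Ad(U)$ established in the first two paragraphs.
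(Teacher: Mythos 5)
Your proof is correct, and it is more self-contained than the paper's, which at three points leans on external or previously established facts where you argue directly. To see that $\Ad(s)$, $s\in\SU(2)$, acts by Lorentz isometries on $(H,\langle\cdot,\cdot\rangle)$, the paper identifies the restriction $\Ad(s)|_H$ with $l(s)$, where $l:\SL(2,\mathbb{C})\to\SO_0(1,3)$, $l(g)(h)=ghg^{\ast}$, is the two-sheeted covering from \cite{Ber23}; you instead just observe that $\langle h,h\rangle=4\det h$ and that conjugation preserves the determinant, which is shorter and needs no citation. For transitivity on $\{w\in H_0\mid\langle w,w\rangle=-1\}$, the paper argues via the bracket relations (\ref{b4}) and $\exp\circ\ad=\Ad\circ\exp$ (i.e.\ via the induced $\SO(3)$-action on the unit sphere of $H_0$), whereas you diagonalize the traceless Hermitian matrix $w$ with eigenvalues $\pm 1/2$ by a special unitary and land on $e_3$; both are fine, and yours is arguably the more concrete computation. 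Finally, the paper compresses the passage from the $\Ad$-invariances to the statement that $\Inn(s)$ is an isometry of $(\GL^+(2,\mathbb{C}),d)$ into the single sentence ``all statements follow from here,'' while you carry it out explicitly by rewriting the derivative of the conjugated curve in the left-invariant frame, checking that admissibility (horizontality, causal character, future direction) and length are preserved, and passing to the supremum in (\ref{co}); this is exactly the step the paper leaves to the reader, and you handle it correctly, including the degenerate case $\Omega_{g_0g_1}=\varnothing$.
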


\begin{proof}
The indicated action by conjugations is determined by the formula	
$$s\in\SU(2)\rightarrow \Inn(s): g\in\GL^+(2,\mathbb{C})\rightarrow sgs^{-1}$$
and defines a subgroup of the automorphism group of the Lie group $\GL^+(2,\mathbb{C}).$
$\Ad(\SU(2))$ is a subgroup of the automorphism group of the Lie algebra $\mathfrak{gl}^+(2,\mathbb{C})$, and since $\GL^+(2,\mathbb {C})$ is a matrix group, its action is determined by the same formula
\begin{equation}
\label{Ad}
\Ad(s)(\cdot)=d\Inn(s)_e(\cdot)= s(\cdot)s^{-1}=s(\cdot)s^{\ast}.
\end{equation}
On the ground of (\ref{kp}) and the equality $\exp\circ \ad=\Ad\circ \exp$, we obtain $\Ad(\SU(2))(H)=H$, and due to (\ref{Ad}),
the restriction $\Ad(s)=l(s),$ $s\in\SU(2),$ where $l:\SL(2,\mathbb{C})\rightarrow \SO_0(1,3)$ is a mentioned in the introduction two-sheeted covering
from Theorem 2 in \cite{Ber23}, defined by the formula $l(g)(h)=ghg^{\ast},$ $g\in\SL(2,\mathbb{C}),$ $h\in (H,\langle\cdot,\cdot\rangle)$.

All statements of Proposition \ref{isom} follow from here, except for the statement about transitivity of the action.
	
In addition, $\Ad(s)(e_0)=e_0$ for all $s\in\SU(2).$ Therefore, $\Ad(\SU(2))(H_0)=H_0$ and due to equalities (\ref{b4}) and $\exp\circ\ad=\Ad\circ\exp$,
$\Ad(\SU(2))$ acts transitively on $\{w\in H_0: \langle w,w\rangle=-1\}.$
\end{proof}

On the ground of Proposition \ref{isom}, Corollary \ref{form} reduces to the case when at most one of the numbers
$\alpha_1,$ $\alpha_2,$ $\alpha_3$ is not equal to $0$ and is positive, if we exclude the trivial case when all these numbers are equal to zero.
For example, if $\alpha_1 > 0,$ $\alpha_2=0,$ $\alpha_3=0,$ then in
Corollary \ref{form} we obtain simpler formulas
$$w_1=\frac{1}{2}\sqrt{1+2{\bf i}\alpha_1\alpha_4-4w^2_2},$$
$$c_0(t)=2e^{\alpha_0t/2}(m_1m_2+n_1n_2w_2^2),\quad c_7(t)=-\frac{e^{\alpha_0t/2}}{2}n_1n_2\alpha_1\alpha_4,$$
$$c_1(t)=e^{\alpha_0t/2}n_1m_2\alpha_1,\quad c_2(t)=\frac{e^{\alpha_0t/2}}{2}n_1n_2\alpha_1\alpha_6,$$
$$c_3(t)=\frac{-e^{\alpha_0t/2}}{2}n_1n_2\alpha_1\alpha_5,\quad
c_i(t)=e^{\alpha_0t/2}(m_2n_1-m_1n_2)\alpha_i,\quad i=4,5,6.$$

\begin{theorem}
\label{slgeod1}
Normal sub-Lorentzian nonspacelike extremals in $(\GL^+(2,\mathbb{C}),d)$ with the origin $e$ are geodesics having the form of a 1-parameter subgroup from Proposition \ref{slpr} or, up to proportional reparamterization, such as in Corollary \ref{cor}.
In the second case, a segment of a geodesic $g(t),$ $0\leq t\leq t_0,$ 
is the longest arc if and only if the segment $\gamma(t),$ $0\leq t\leq t_0,$ is the shortest arc in $(\SL (2,\mathbb{C}),(\cdot,\cdot)).$

Each segment of a normal sub-Lorentzian extremal in $(\GL^+(2,\mathbb{C}),d)$ with the origin $e$, which is the longest arc, is the same as in Theorem
\ref{slgeod} or Proposition \ref{slpr}.
\end{theorem}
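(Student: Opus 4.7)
The plan is to read off the structure of a normal nonspacelike extremal from Theorems \ref{main2} and \ref{main3}, split off the central factor $\mathbb{R}_{+}I$ of $\GL^{+}(2,\mathbb{C})$ using Proposition \ref{iso}, and then compare the resulting curve with the description of nonspacelike longest arcs in Theorem \ref{slgeod}.

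First I would take an arbitrary normal nonspacelike extremal
$$g(t)=\exp\!\Bigl(t\sum_{i=0}^{6}\alpha_{i}e_{i}\Bigr)\exp\!\Bigl(-t\sum_{i=4}^{6}\alpha_{i}e_{i}\Bigr)$$
and use the centrality of $e_{0}=I/2$ in $\mathfrak{gl}^{+}(2,\mathbb{C})$ to factor $g(t)=e^{\alpha_{0}t/2}\gamma(t)$, where
$$\gamma(t):=\exp\!\Bigl(t\sum_{i=1}^{6}\alpha_{i}e_{i}\Bigr)\exp\!\Bigl(-t\sum_{i=4}^{6}\alpha_{i}e_{i}\Bigr)\in \SL(2,\mathbb{C})$$
is, by Proposition \ref{subrgeod}, a sub-Riemannian geodesic of speed $\sigma:=\sqrt{\alpha_{1}^{2}+\alpha_{2}^{2}+\alpha_{3}^{2}}$. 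The degenerate subcase $\sigma=0$ is excluded in the isotropic case by (\ref{norm1}); in the timelike case (\ref{norm}) forces $\alpha_{0}=1$ and $\gamma\equiv e$, so $g(t)=2e^{t/2}e_{0}$ is exactly the 1-parameter subgroup of Proposition \ref{slpr}, each segment of which is a longest arc.

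In the generic case $\sigma>0$, I would reparametrize $\gamma$ by arclength as $\tilde\gamma$, set ${\rm ch}(c)=\alpha_{0}$ and ${\rm sh}(c)=\sigma$ in the timelike case (consistent with (\ref{norm})) and $\alpha_{0}=\sigma=1$ in the isotropic case. After the linear reparametrization $\tau=\alpha_{0}t$ the extremal reads $e^{\tau/2}\tilde\gamma((\sigma/\alpha_{0})\tau)$, which is precisely the form handled by Corollary \ref{cor}; this yields the first conclusion that every such extremal is a geodesic in $(\GL^{+}(2,\mathbb{C}),d)$. For the longest-arc criterion I would fix a segment $g|_{[0,t_{0}]}$ with endpoint $e^{\xi/2}g_{1}$, where $\xi=\alpha_{0}t_{0}$ and $g_{1}=\gamma(t_{0})$, and observe that $\rho(e,g_{1})\leq\sigma t_{0}$ with equality iff $\gamma|_{[0,t_{0}]}$ is a sub-Riemannian shortest arc. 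The sub-Lorentzian length of $g|_{[0,t_{0}]}$ equals $t_{0}$ in the timelike case (by arclength parametrization, equivalently $\sqrt{\xi^{2}-(\sigma t_{0})^{2}}$) and equals $0$ in the isotropic case. Applying Theorem \ref{slgeod} gives $d(e,g(t_{0}))=\sqrt{\xi^{2}-\rho(e,g_{1})^{2}}$ in the timelike regime and $d(e,g(t_{0}))=0$ precisely when $\xi=\rho(e,g_{1})$ in the isotropic regime; in both cases the length of $g|_{[0,t_{0}]}$ matches $d(e,g(t_{0}))$ exactly when $\rho(e,g_{1})=\sigma t_{0}$, i.e., when $\gamma|_{[0,t_{0}]}$ is a shortest arc. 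The final assertion of the theorem follows at once: a longest-arc segment either falls into the case of Proposition \ref{slpr}, or, via the iff just established, corresponds to a shortest-arc segment in $(\SL(2,\mathbb{C}),\rho)$ and is therefore the explicit longest arc described in Theorem \ref{slgeod}.

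The main technical obstacle I foresee is the careful bookkeeping of the two different arclength conventions: Theorems \ref{main2}--\ref{main3} parametrize the extremal by sub-Lorentzian arclength (so that $\|u(0)\|=1$ in the timelike case), whereas Theorem \ref{slgeod} and Corollary \ref{cor} parametrize the central $\mathbb{R}_{+}I$ factor as $e^{t/2}$; the multiplicative factor $\alpha_{0}={\rm ch}(c)$ must be tracked through the reparametrization to make the two forms match and to identify the length $t_{0}$ with $\sqrt{\xi^{2}-(\sigma t_{0})^{2}}$. Once this is reconciled, the rest is a direct algebraic consequence of the distance formula contained in Theorem \ref{slgeod}.
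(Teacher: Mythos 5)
Your proposal is correct and follows essentially the same route as the paper: split off the central factor $e^{\alpha_0 t/2}$ via Proposition \ref{iso}, recognize the $\SL(2,\mathbb{C})$-factor as a sub-Riemannian geodesic via Proposition \ref{subrgeod} and formula (\ref{geod2}), handle $\vec{\alpha}=\vec{0}$ by Proposition \ref{slpr}, and reduce the longest-arc criterion to the length comparison of Theorem \ref{slgeod}. The only difference is that you spell out the ``if and only if'' step through the explicit formula $d(e,e^{\xi/2}g_1)=\sqrt{\xi^2-\rho(e,g_1)^2}$, where the paper simply remarks that it is proved exactly as Theorem \ref{slgeod}.
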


\begin{proof}
The proof uses Proposition \ref{iso}.
	
If $\vec{\alpha}=\{\alpha_1,\alpha_2,\alpha_3\}=\vec{0}$ then, due to Theorem \ref{main2}, we obtain the 1-parameter subgroup from Proposition \ref{slpr},
and, as a consequence of Proposition \ref{slpr}, a geodesic, each segment of which is the longest arc.	

If $\vec{\alpha}\neq\vec{0}$ then, due to Theorems \ref{main2}, \ref{main3} and formula (\ref{geod2}), the normal extremal has the form	
$g(s)=e^{\alpha_{0}s/2}\gamma(s),$ $s\in\mathbb{R},$ where $g(s)$ is parametrized by arclength, and $\gamma(s),$ $s\in\mathbb{R},$ is a geodesic in $(\SL(2,\mathbb{C}),\rho),$ parametrized proportionally to the arclength. 
Consequently, up to proportional reparametrization, we obtain a curve as in Corollary \ref{cor}, and on the base of this Corollary,  we obtain a nonspacelike geodesic. The penultimate statement is proved in exactly the same way as Theorem \ref{slgeod}.
	
The last statement of Theorem \ref{slgeod1} follows from the proven statements.
\end{proof}

The next theorem follows from Theorems \ref{ppp}, \ref{slgeod} and Proposition \ref{slpr}.	

\begin{theorem}
\label{onepa}
Each segment of a future directed nonspacelike 1-parameter subgroup $e^{t/2}\gamma(t),$ $t\in\mathbb{R}$, where $\gamma'(0)\in H_0,$ is a longest arc in $(\GL^+(2,\mathbb{C}),d)$ parametrized proportionally to the arclength for the timelike 1-parameter subgroup.
\end{theorem}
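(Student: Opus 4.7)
The plan is to reduce the statement, segment by segment, to the three ingredients already at hand: Theorem \ref{ppp} (which identifies $\rho$--distances along one--parameter subgroups with generator in $H_0$), Theorem \ref{slgeod} (which produces future directed nonspacelike longest arcs from sub--Riemannian shortest arcs in $(\SL(2,\mathbb{C}),\rho)$), and Proposition \ref{slpr} (which handles the purely temporal one--parameter subgroup $2e^{t/2}e_0$).

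First I would normalize the set--up. Since $e_0=I/2$ is central in $H$, any nonspacelike future directed one--parameter subgroup through $e$ of the form $e^{t/2}\gamma(t)$ with $\gamma'(0)\in H_0$ can be rewritten as $g(t)=\exp(t(e_0+Y))=e^{t/2}\exp(tY)$ with $Y\in H_0$. The nonspacelike condition then becomes $\langle e_0+Y,e_0+Y\rangle=1-(Y,Y)\geq 0$, so $(Y,Y)\leq 1$. Write $Y=aX$ with $X\in H_0$, $(X,X)=1$ and $a=\sqrt{(Y,Y)}\in[0,1]$; the future directed condition $\langle e_0+Y,e_0\rangle=1>0$ is automatic.

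Next I fix an arbitrary segment $0\leq t\leq t_0$ and analyze three cases separately.

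\textbf{Case $a=0$.} Then $Y=0$ and $g(t)=2e^{t/2}e_0$. Proposition \ref{slpr} immediately says that this segment is a timelike longest arc parametrized by arclength.

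\textbf{Case $a=1$.} Here $\gamma(t)=e^{tX}$ is, by Theorem \ref{ppp}, a unit--speed shortest arc in $(\SL(2,\mathbb{C}),\rho)$, so $\rho(e,e^{t_0X})=t_0$. Writing the endpoint as $g(t_0)=e^{t_0/2}e^{t_0X}$, we have in the notation of Theorem \ref{slgeod} that $\xi=t_0=\eta$, and the isotropic clause of that theorem asserts precisely that $e^{s/2}\gamma(s)$, $0\leq s\leq t_0$, is an isotropic longest arc; this is $g(t)$ itself.

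\textbf{Case $0<a<1$.} Here $\rho(e,e^{t_0Y})=\rho(e,e^{t_0aX})=t_0a$ by Theorem \ref{ppp}, and the endpoint is $g(t_0)=e^{t_0/2}e^{t_0aX}$, so $\xi=t_0>t_0a=\eta>0$. I would choose $c>0$ by $\tanh c=a$, giving $\mathrm{ch}(c)=1/\sqrt{1-a^2}$ and $\mathrm{sh}(c)=a/\sqrt{1-a^2}$; then with $k:=t_0/\mathrm{ch}(c)$ one verifies $\xi=k\,\mathrm{ch}(c)$ and $\eta=k\,\mathrm{sh}(c)$. Taking the unit--speed shortest arc $\gamma(s)=e^{sX}$ from Theorem \ref{ppp}, the timelike clause of Theorem \ref{slgeod} produces the longest arc $h(s):=e^{\mathrm{ch}(c)s/2}\gamma(\mathrm{sh}(c)s)=e^{\mathrm{ch}(c)s/2}e^{\mathrm{sh}(c)sX}$ for $0\leq s\leq k$, parametrized by arclength. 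The substitution $t=\mathrm{ch}(c)\cdot s$ rewrites this as $g(t)$ for $0\leq t\leq t_0$, exhibiting $g$ as the longest arc $h$ reparametrized by the positive constant $\mathrm{ch}(c)$; in particular $g$ is itself a longest arc, parametrized proportionally to arclength, which matches the phrasing of the theorem.

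I do not expect a serious obstacle: the only bookkeeping is the matching $(\xi,\eta)\leftrightarrow(k,c)$ of Theorem \ref{slgeod}, and the verification that the reparametrization by $\mathrm{ch}(c)$ turns the arclength longest arc of Theorem \ref{slgeod} into our $g$. The three--case split is forced by whether $e_0+Y$ is strictly timelike, isotropic, or equal to $e_0$, and each case is handled by exactly one of the three cited results.
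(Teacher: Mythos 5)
Your argument is correct and is exactly the route the paper intends: the paper gives no details beyond stating that Theorem \ref{onepa} follows from Theorems \ref{ppp}, \ref{slgeod} and Proposition \ref{slpr}, and your three-case split (trivial spatial part, isotropic, strictly timelike) together with the identification $\eta=\rho(e,e^{t_0 a X})=t_0 a$ via Theorem \ref{ppp} and the matching $\mathrm{th}(c)=a$, $k=t_0/\mathrm{ch}(c)$ is precisely the bookkeeping needed to invoke those results. No gaps.
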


\section{Addition on sub-Riemannian geodesics on $\SL(2,\mathbb{C})$}
\label{subriem1}

The next proposition is proved in exactly the same way as Proposition \ref{isom}.

\begin{proposition}
\label{isom1}
The action of the Lie subgroup $\SU(2)\subset\SL(2,\mathbb{C})$ on $\SL(2,\mathbb{C})$ by conjugations is a subgroup of the internal automorphism group 
of the group $\SL(2,\mathbb{C})$; it preserves the subsets $\SU(2)$ and $\exp(H_0)$, and defines an isometry subgroup of the sub-Riemannian space $(\SL(2,\mathbb{C}),\rho)$.
The adjoint action $\Ad(\SU(2))$ of the group $\SU(2)$ on the Lie algebra $\mathfrak{sl}(2,\mathbb{C})$ defines a subgroup of its automorphism group and an isometry subgroup of the space $(H_0,(\cdot,\cdot)),$ acting transitively on the unit sphere $\{w\in H_0\mid (w,w)=+1\}.$
\end{proposition}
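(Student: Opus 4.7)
The plan is to mirror the proof of Proposition \ref{isom} almost verbatim, with the simplifications that come from working inside $\SL(2,\mathbb{C})$ and $\mathfrak{sl}(2,\mathbb{C})$. First I would note that for $s\in\SU(2)\subset\SL(2,\mathbb{C})$ the map $\Inn(s):g\mapsto sgs^{-1}$ is an inner automorphism of $\SL(2,\mathbb{C})$; since $\SU(2)$ is a subgroup, $\Inn(s)(\SU(2))=\SU(2)$ trivially. For preservation of $\exp(H_0)$, since $s^{-1}=s^{\ast}$ for $s\in\SU(2)$, the conjugation $h\mapsto shs^{\ast}$ sends Hermitian traceless matrices to Hermitian traceless matrices, hence $\Ad(s)(H_0)=H_0$, and then $\Inn(s)(\exp(H_0))=\exp(\Ad(s)(H_0))=\exp(H_0)$ via $\exp\circ\ad=\Ad\circ\exp$. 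Similarly $\Ad(s)(\mathfrak{su}(2))=\mathfrak{su}(2)$.

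Next I would verify that $\Ad(s)$ is an isometry of $(H_0,(\cdot,\cdot))$. Since $(\cdot,\cdot)=-\langle\cdot,\cdot\rangle|_{H_0}$ and the Lorentzian form on $H$ is $\langle h,h\rangle=4\det h$, the identity $\det(shs^{\ast})=\det(s)\det(h)\overline{\det(s)}=\det h$ immediately gives $\langle\Ad(s)h,\Ad(s)h\rangle=\langle h,h\rangle$, so $\Ad(s)$ preserves $(\cdot,\cdot)$ on $H_0$.

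To upgrade this to the isometry statement on $(\SL(2,\mathbb{C}),\rho)$, I would use the fact that for a matrix group, if $\gamma(t)$ is a horizontal path with $\gamma'(t)=dl_{\gamma(t)}(u(t))=\gamma(t)u(t)$, $u(t)\in H_0$, then $\tilde\gamma(t):=s\gamma(t)s^{-1}$ satisfies
\[
\tilde\gamma'(t)=s\gamma(t)u(t)s^{-1}=\tilde\gamma(t)\,\Ad(s)(u(t))=dl_{\tilde\gamma(t)}(\Ad(s)(u(t))),
\]
with $\Ad(s)(u(t))\in H_0$ and $(\Ad(s)u(t),\Ad(s)u(t))=(u(t),u(t))$. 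Hence $\Inn(s)$ sends horizontal paths to horizontal paths of the same length, so it preserves $\rho$.

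Finally, transitivity of $\Ad(\SU(2))$ on the unit sphere of $H_0$ I would obtain from the classical double cover $\SU(2)\to\SO(3)$: restricted to $H_0\cong\mathbb{R}^3$ with the positive-definite form $(\cdot,\cdot)$, the homomorphism $s\mapsto\Ad(s)|_{H_0}$ lands in $\SO(H_0,(\cdot,\cdot))\cong\SO(3)$, is surjective (standard double cover), and $\SO(3)$ acts transitively on its unit sphere. Alternatively one can argue directly: the orbit of $e_3$ under $\Ad(\SU(2))$ is closed and, by computing the isotropy using (\ref{b1})--(\ref{b4}), has dimension $3-1=2$, so it equals the full $2$-sphere. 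I expect no serious obstacles here: the only thing that requires a small verification is the passage from the Lie-algebra isometry to the sub-Riemannian isometry of $\rho$, and this is immediate from the matrix-group computation above.
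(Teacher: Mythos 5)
Your proposal is correct and follows essentially the same route as the paper: the paper proves Proposition \ref{isom1} by declaring it "proved in exactly the same way as Proposition \ref{isom}", whose proof likewise rests on $\Ad(s)(\cdot)=s(\cdot)s^{\ast}$ preserving $H$ (hence $H_0$, since $\Ad(s)e_0=e_0$) and the pseudoscalar product, with transitivity on the unit sphere coming from the induced $\SO(3)$-action on $H_0$. Your extra details (the determinant computation and the explicit check that conjugation maps horizontal paths to horizontal paths of equal length) only make explicit what the paper leaves implicit.
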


Let $\vec{\alpha}=\{\alpha_1,\alpha_2,\alpha_3\}$ and $\vec{\beta}=\{\alpha_4,\alpha_5,\alpha_6\}$ be vectors in Euclidean space $\mathbb{R}^3$, $\beta:=|\vec{\beta}|$. Let  $\gamma(\vec{\alpha},\vec{\beta};t)$, $t\in\mathbb{R}$, denote the geodesic $\gamma(t)$ given by the formula (\ref{geod2}).

Corollary \ref{form} and comparison of equalities (\ref{geod}) and (\ref{geod2}) give

\begin{proposition}
\label{srgeod}
The geodesic $\gamma(\vec{\alpha},\vec{\beta};t)$ coincides with $g(t)$ from Corollary \ref{form} for $\alpha_0=0$ and (\ref{norm2}). In addition,
$w_1=(1/2)\sqrt{1-\beta^2+2{\bf i}(\vec{\alpha}\cdot\vec{\beta})}.$
\end{proposition}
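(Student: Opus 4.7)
The plan is to derive both claims by direct comparison of the relevant formulas, without any new calculation of matrix exponentials.

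First, I would compare the geodesic formula (\ref{geod2}) from Proposition \ref{subrgeod} with the extremal formula (\ref{geod}) from Theorem \ref{main2}. Since $e_0 = \sigma_0/2$ is a scalar multiple of the identity matrix, it commutes with every $e_i$, so
\begin{equation*}
\exp\left(t\sum_{i=0}^{6}\alpha_i e_i\right) = e^{\alpha_0 t/2}\exp\left(t\sum_{i=1}^{6}\alpha_i e_i\right).
\end{equation*}
Thus, substituting $\alpha_0 = 0$ in (\ref{geod}) yields exactly (\ref{geod2}), and imposing the normalization (\ref{norm2}) puts us in the setting of Proposition \ref{subrgeod}. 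In other words, the sub-Riemannian geodesic $\gamma(\vec{\alpha},\vec{\beta};t)$ is the specialization of the sub-Lorentzian normal extremal $g(t)$ to the case $\alpha_0 = 0$ with (\ref{norm2}). Since Corollary \ref{form} provides an explicit expansion of (\ref{geod}) (obtained via Proposition \ref{expon} applied to the complex coefficients $\alpha_i + \mathbf{i}\alpha_{i+3}$ coming from $e_{i+3} = \mathbf{i}e_i$ for $i = 1, 2, 3$), substituting $\alpha_0 = 0$ in the formulas of Corollary \ref{form} produces the claimed matrix-form expansion of $\gamma(\vec{\alpha},\vec{\beta};t)$.

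Second, I would verify the formula for $w_1$ by direct algebraic expansion. From the definition in Corollary \ref{form},
\begin{equation*}
4w_1^2 = \sum_{j=1}^{3}(\alpha_j + \mathbf{i}\alpha_{j+3})^2 = \sum_{j=1}^{3}\bigl(\alpha_j^2 - \alpha_{j+3}^2 + 2\mathbf{i}\alpha_j \alpha_{j+3}\bigr).
\end{equation*}
Separating real and imaginary parts and using $\beta^2 = \alpha_4^2 + \alpha_5^2 + \alpha_6^2$ together with $\vec{\alpha}\cdot\vec{\beta} = \alpha_1\alpha_4 + \alpha_2\alpha_5 + \alpha_3\alpha_6$, this reduces to
\begin{equation*}
4w_1^2 = (\alpha_1^2+\alpha_2^2+\alpha_3^2) - \beta^2 + 2\mathbf{i}(\vec{\alpha}\cdot\vec{\beta}).
\end{equation*}
Applying (\ref{norm2}) gives $\alpha_1^2+\alpha_2^2+\alpha_3^2 = 1$, so $w_1 = (1/2)\sqrt{1 - \beta^2 + 2\mathbf{i}(\vec{\alpha}\cdot\vec{\beta})}$, as claimed.

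The argument is essentially bookkeeping: there is no genuine obstacle, but one should be careful with the convention $e_{i+3} = \mathbf{i}e_i$ when recognizing that the first matrix exponent in (\ref{geod}) has complex coefficients on the basis $e_1, e_2, e_3$, which is what legitimates the direct use of Proposition \ref{expon} with $w = w_1$ computed from those complex coefficients.
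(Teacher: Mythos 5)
Your proposal is correct and follows essentially the same route as the paper, which derives the proposition precisely by comparing formulas (\ref{geod}) and (\ref{geod2}) (using that $e_0$ is central so the $\alpha_0$-factor splits off) and then reading off $w_1$ from Corollary \ref{form} under the normalization (\ref{norm2}). The algebraic expansion of $4w_1^2$ is the same bookkeeping the paper leaves implicit.
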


\begin{remark}
If the vector $\vec{\beta}$ is collinear to the vector $\vec{\alpha}$, then the geodesic $\gamma(\vec{\alpha},\vec{\beta};t)$ is a 1-parameter subgroup: $\gamma(\vec{\alpha},\vec{\beta};t)=\exp(t(\alpha_1e_1+\alpha_2e_2+\alpha_3e_3))$.
\end{remark}

We will need the following known proposition.

\begin{proposition}
\label{small}
If in the Lie group with left-invariant sub-Riemannian metric two
points are joined by two different normal geodesics of equal
length, then either of these geodesics is not a shortest arc or is not a part of a longer shortest
arc.
\end{proposition}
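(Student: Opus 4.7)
To prove Proposition \ref{small}, the plan is to argue by contradiction. Suppose two distinct normal geodesics $\gamma_1, \gamma_2 : [0, L] \to G$ both join $p$ to $q$ and have the same length $L$, that both are shortest arcs, and that each is contained in a strictly longer shortest arc $\tilde\gamma_i : [0, a_i] \to G$ with $a_i > L$ and $\tilde\gamma_i|_{[0, L]} = \gamma_i$. I would then splice the two pieces together: let $\sigma : [0, a_1] \to G$ be given by $\sigma := \gamma_2$ on $[0, L]$ and $\sigma := \tilde\gamma_1$ on $[L, a_1]$. Then $\sigma$ is a horizontal Lipschitz curve from $p$ to $\tilde\gamma_1(a_1)$ whose length equals $L + (a_1 - L) = a_1 = \rho(p, \tilde\gamma_1(a_1))$, so $\sigma$ is itself a shortest arc.

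The argument now splits according to the left and right velocities of $\sigma$ at the junction $t = L$. If they coincide, i.e.\ $\gamma_2'(L) = \tilde\gamma_1'(L) = \gamma_1'(L)$, then $\gamma_1$ and $\gamma_2$ are two normal extremals satisfying the same left-invariant Hamiltonian system with identical terminal data $(q, \gamma_1'(L))$ at time $t = L$; uniqueness for this ODE, applied backwards in time, forces $\gamma_1 \equiv \gamma_2$, contradicting their distinctness. An equivalent way to phrase this step is that the covector function of the normal extremal $\sigma$ would coincide on $[0, L]$ with the covector of $\gamma_2$ and on $[L, a_1]$ with the covector of $\tilde\gamma_1$, and the required continuity at $t = L$ again yields $\gamma_1 = \gamma_2$.

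The main obstacle, and the decisive step, is to rule out the remaining case in which the minimizer $\sigma$ has a genuine corner at the interior point $q$. For this one invokes the standard fact that a sub-Riemannian shortest arc cannot possess such a corner: in an arbitrarily small neighbourhood of $q$, the Chow--Rashevskii theorem together with local distance estimates (via privileged coordinates and the Ball--Box theorem) provide a horizontal shortcut from $\sigma(L - \varepsilon)$ to $\sigma(L + \varepsilon)$ whose length is strictly less than the length of the piece of $\sigma$ it replaces, yielding a horizontal curve strictly shorter than $\sigma$ between the same endpoints and contradicting the minimality of $\sigma$. In the Riemannian setting this is a direct first-variation argument; in sub-Riemannian geometry on a Lie group the justification is more delicate but is by now classical, and it is the only non-trivial ingredient required in the proof. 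Combining the three steps yields the contradiction and establishes the proposition.
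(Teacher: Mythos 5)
The paper itself offers no proof of Proposition~\ref{small} --- it is quoted as a ``known proposition'' --- so your attempt must be measured against the standard argument. Your opening move (splice $\gamma_2|_{[0,L]}$ with the extension $\tilde\gamma_1|_{[L,a_1]}$ and observe that the result $\sigma$ has length $a_1=\rho(p,\tilde\gamma_1(a_1))$, hence is a shortest arc) is exactly right and is where every proof of this fact starts. The decisive step, however, is wrong. In sub-Riemannian geometry a normal geodesic is \emph{not} determined by a point and a velocity: it is determined by a point and an initial \emph{covector} in $T^*G$, and the covector has more components than the horizontal velocity. Already on the Heisenberg group infinitely many distinct normal geodesics leave the origin with the same initial velocity (circles of all radii over the same tangent direction). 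So even after you have arranged $\gamma_2'(L)=\gamma_1'(L)$, backward ODE uniqueness ``with terminal data $(q,\gamma_1'(L))$'' does not force $\gamma_1\equiv\gamma_2$. Your covector rephrasing has the same defect: you would need to know that the normal lift of $\sigma$ agrees on $[L,a_1]$ with a lift of $\tilde\gamma_1$ and that the full covectors (not just their horizontal projections) match at $t=L$, and neither is established. Your second ingredient is also shakier than you present it: the nonminimality of corners of sub-Riemannian length minimizers is \emph{not} obtainable from the Ball--Box theorem and a local shortcut --- that naive argument is precisely what fails, which is why the question was open until the Hakavuori--Le Donne theorem (2016). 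And even granting $C^1$-regularity of $\sigma$ at the junction, you are thrown back onto the flawed velocity-uniqueness step.

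The standard proof closes both gaps at once and needs neither corners nor velocity matching. Since $\sigma$ is a shortest arc, it is a geodesic, and in the present setting every geodesic of $(\SL(2,\mathbb{C}),\rho)$ is normal (Proposition~\ref{subrgeod}); a normal geodesic is the projection of a solution of the left-invariant Hamiltonian system, which is real-analytic, so $\sigma$ is a real-analytic curve on $[0,a_1]$. It coincides with the real-analytic curve $\tilde\gamma_1$ on the interval $[L,a_1]$, so by the identity theorem $\sigma\equiv\tilde\gamma_1$ on all of $[0,a_1]$; hence $\gamma_2=\sigma|_{[0,L]}=\gamma_1$, contradicting distinctness. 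Note also a logical slip in your setup: the statement asserts that \emph{each} of the two geodesics fails to be an extendable shortest arc, so for the contradiction you may assume only that \emph{one} of them, say $\gamma_1$, is a shortest arc contained in a longer shortest arc; the splice only uses that $\gamma_2$ has length $L$, so the argument goes through unchanged, whereas assuming that \emph{both} extend (as you do) proves only the weaker ``not both'' conclusion.
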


\begin{proposition}
\label{p1}	
Let $\beta>1$ and $\vec{\alpha}\cdot\vec{\beta}=0$. 
If the segment $\gamma(t)$, $0\leq t\leq T$, of a geodesic $\gamma(\vec{\alpha},\vec{\beta};t)$ is a shortest arc, then $T\leq \frac{2\pi}{\sqrt{\beta^2-1}}$. 
\end{proposition}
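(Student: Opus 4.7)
The plan is to show that at $T_0 := 2\pi/\sqrt{\beta^2-1}$ there exist at least two distinct normal geodesics of equal length $T_0$ joining $e$ and $\gamma(T_0)$, and then to invoke Proposition \ref{small} together with the standard fact that every sub-arc of a shortest arc is itself a shortest arc.

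First, I would compute $\gamma(T_0)$. By Proposition \ref{srgeod}, under $\vec{\alpha}\cdot\vec{\beta}=0$ and $\beta>1$ the constant $w_1$ is purely imaginary: $w_1 = \frac{\mathbf{i}}{2}\sqrt{\beta^2-1}$. Writing $\mu := \sqrt{\beta^2-1}/2$, one has $m_1 = \cos(\mu t)$ and $n_1 = \sin(\mu t)/\mu$, so at $t=T_0$ the identity $\mu T_0 = \pi$ forces $n_1(T_0) = 0$ and $m_1(T_0) = -1$. Substituting into the coordinate formulas of Corollary \ref{form} with $\alpha_0 = 0$, each of the coordinates $c_1, c_2, c_3, c_7$ vanishes at $T_0$, and comparison with (\ref{hh}) yields
$$\gamma(T_0) = -2m_2(T_0)e_0 + n_2(T_0)\sum_{i=4}^6 \alpha_i e_i = -\exp\Bigl(-T_0\sum_{i=4}^6\alpha_i e_i\Bigr)\in \SU(2).$$

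Next, I would exploit the isometric $\SU(2)$-action from Proposition \ref{isom1}. Take $s := \exp(\theta\sum_{i=4}^6 \alpha_i e_i/\beta) \in \SU(2)$ with $\theta$ chosen so that $\Ad(s)\vec{\alpha}\neq\vec{\alpha}$; this is possible because $\Ad(s)$ acts on $H_0$ as a rotation about $\vec{\beta}$ and $\vec{\alpha}$ is a nonzero vector in the rotated plane. Since $s$ lies in the 1-parameter subgroup generated by $\sum_{i=4}^6\alpha_i e_i$, it commutes with $\exp(-T_0\sum_{i=4}^6\alpha_i e_i)$ and with the central element $-I$; hence $s\gamma(T_0)s^{-1} = \gamma(T_0)$. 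By Proposition \ref{isom1}, the curve $\tilde\gamma(t) := s\gamma(t)s^{-1}$ is a geodesic in $(\SL(2,\mathbb{C}),\rho)$ parametrized by arclength and equals $\gamma(\Ad(s)\vec{\alpha},\vec{\beta};t)$; in particular $\tilde\gamma(0)=e$, $\tilde\gamma(T_0)=\gamma(T_0)$, and $\tilde\gamma\neq\gamma$ since their initial tangent vectors differ.

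Finally, suppose for contradiction that $\gamma|_{[0,T]}$ is a shortest arc for some $T > T_0$. Then its sub-arc $\gamma|_{[0,T_0]}$ is also a shortest arc. Applying Proposition \ref{small} to the two distinct normal geodesics $\gamma|_{[0,T_0]}$ and $\tilde\gamma|_{[0,T_0]}$ of common length $T_0$ joining $e$ to $\gamma(T_0)$, $\gamma|_{[0,T_0]}$ cannot be part of any longer shortest arc; this contradicts $\gamma|_{[0,T_0]}\subset\gamma|_{[0,T]}$. Hence $T\leq T_0 = 2\pi/\sqrt{\beta^2-1}$. The only delicate step is the computation of $\gamma(T_0)$ via Corollary \ref{form}; once $n_1(T_0)=0$ is established, the $\SU(2)$-symmetry argument closes the proof cleanly.
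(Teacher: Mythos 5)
Your proof is correct and follows essentially the same route as the paper: both compute that at $t_0=2\pi/\sqrt{\beta^2-1}$ the factor $n_1$ vanishes, so that $\gamma(t_0)\in\SU(2)$ depends only on $\vec{\beta}$, and then invoke Proposition \ref{small} (together with the fact that a subarc of a shortest arc is a shortest arc). The only cosmetic difference is that you produce the second competing geodesic by conjugating with an element of $\SU(2)$ whose adjoint action rotates $\vec{\alpha}$ about $\vec{\beta}$, whereas the paper simply replaces $\vec{\alpha}$ by another unit vector orthogonal to $\vec{\beta}$; these yield the same family of geodesics.
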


\begin{proof}
If $\beta>1$ and $\vec{\alpha}\cdot\vec{\beta}=0$ then in the notation of Corollary~1, we successively obtain
$w_1={\bf i}\sqrt{\beta^2-1}$, $w_2=\beta/2,$
$$m_1=\cos\frac{t\sqrt{\beta^2-1}}{2},\quad n_1=\frac{2\sin\frac{t\sqrt{\beta^2-1}}{2}}{\sqrt{\beta^2-1}},\quad m_2=\cos\frac{\beta t}{2},\quad n_2=\sin\frac{\beta t}{2}.$$
Therefore $m_1(t_0)=-1$, $n_1(t_0)=0$ for $t_0=\frac{2\pi}{\sqrt{\beta^2-1}}$. Then $c_i(t_0)=0$, $i=1,2,3,7;$
$$c_0(t_0)=-2\cos\frac{\beta\pi}{\sqrt{\beta^2-1}};\quad c_j(t_0)=\alpha_j\sin\frac{\beta\pi}{\sqrt{\beta^2-1}},\,\,j=4,5,6.$$
Consequently, $\gamma(\vec{\alpha},\vec{\beta};t_0)\in \SU(2)$ does not depend on the coordinates of the vector $\vec{\alpha}$.
It remains to apply Proposition \ref{small}.
\end{proof}

\begin{proposition}
\label{osn}
Let $a=\alpha_1e_1+\alpha_2e_2+\alpha_3e_3$, $b=\alpha_4e_4+\alpha_5e_5+\alpha_6e_6$. Set
$$x={\rm Re}\left(\frac{1}{2}\sqrt{(\alpha_1+{\bf i}\alpha_4)^2+(\alpha_2+{\bf i}\alpha_5)^2+(\alpha_3+{\bf i}\alpha_6)^2}\right),$$
$$y={\rm Im}\left(\frac{1}{2}\sqrt{(\alpha_1+{\bf i}\alpha_4)^2+(\alpha_2+{\bf i}\alpha_5)^2+(\alpha_3+{\bf i}\alpha_6)^2}\right).$$
For non-zero vectors $\vec{\alpha}=\{\alpha_1,\alpha_2,\alpha_3\}$, $\vec{\beta}=\{\alpha_4,\alpha_5,\alpha_6\}$, $\beta:=|\vec{\beta}|$,
the matrix $\exp(a+b)exp(-b)$ is Hermitian if and only if one of the conditions is satisfied:
	
1) the vectors $\vec{\alpha}$ and $\vec{\beta}$ are collinear;
	
2) $x=\cos(\beta/2)=\cos{y}=0$;
	
3) $\cos(\beta/2)\neq 0$, $\cos{y}\neq 0$ and the triples $x,y,\beta/2$ and ${\rm th}x,{\rm tg}y,{\rm tg}(\beta/2)$ are proportional;
	
4) $x=y=0$ and ${\rm tg}(\beta/2)=\beta/2.$
\end{proposition}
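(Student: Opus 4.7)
The plan is to compute $M := \exp(a+b)\exp(-b)$ explicitly as a $2\times 2$ matrix and read off Hermiticity from the vanishing of imaginary parts. Writing $a = \tfrac{1}{2}\vec\alpha\cdot\vec\sigma$ and $b = \tfrac{\mathbf{i}}{2}\vec\beta\cdot\vec\sigma$ in the Pauli basis, the identity $(\vec u\cdot\vec\sigma)^2 = (\vec u\cdot\vec u)\,I$ (bilinear in complex $\vec u$) gives $(a+b)^2 = w_1^2 I$ with $w_1^2 = \tfrac14[(|\vec\alpha|^2-\beta^2)+2\mathbf{i}\,\vec\alpha\cdot\vec\beta]$ and $b^2 = -(\beta/2)^2 I$, so by Proposition \ref{expon},
\[
\exp(a+b) = m_1 I + n_1(a+b), \qquad \exp(-b) = m_2 I - n_2 b,
\]
with $m_1 = \cosh w_1$, $n_1 = \sinh(w_1)/w_1$ (equal to $1$ if $w_1 = 0$), $m_2 = \cos(\beta/2)$, $n_2 = \sin(\beta/2)/(\beta/2)$. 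Multiplying these out via $(\vec u\cdot\vec\sigma)(\vec v\cdot\vec\sigma) = (\vec u\cdot\vec v)I + \mathbf{i}(\vec u\times\vec v)\cdot\vec\sigma$ (bilinear in complex $\vec u,\vec v$) produces $M = A\,I + \vec B\cdot\vec\sigma$ with
\[
A = m_1 m_2 + \tfrac{n_1 n_2 \beta^2}{4} - \tfrac{\mathbf{i}}{4} n_1 n_2 (\vec\alpha\cdot\vec\beta),
\]
\[
\vec B = \tfrac{m_2 n_1}{2}\vec\alpha + \tfrac{\mathbf{i}(m_2 n_1 - m_1 n_2)}{2}\vec\beta + \tfrac{n_1 n_2}{4}(\vec\alpha\times\vec\beta).
\]
Since $\{I,\sigma_1,\sigma_2,\sigma_3\}$ is an $\mathbb{R}$-basis of the space of Hermitian $2\times 2$ matrices, $M$ is Hermitian iff $A\in\mathbb R$ and $\vec B\in\mathbb R^3$.

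For the case split, if $\vec\alpha\parallel\vec\beta$ the Remark preceding the proposition collapses $M$ to $\exp(a)$, which is Hermitian: case 1. Otherwise $\{\vec\alpha,\vec\beta,\vec\alpha\times\vec\beta\}$ is an $\mathbb{R}$-basis of $\mathbb R^3$, so $\im(\vec B) = 0$ splits into three scalar equations. Putting $w_1 = x + \mathbf{i}y$, $n_1 = r + \mathbf{i}s$, and $p := \re(m_1) = \cosh x\cos y$, they read
\[
m_2 s = 0, \qquad m_2 r - n_2 p = 0, \qquad n_2 s = 0.
\]
Since $\cos(\beta/2)$ and $\sin(\beta/2)$ cannot vanish simultaneously, the first and third force $s = 0$, which unfolds to the key identity $x\cosh x\sin y = y\sinh x\cos y$.

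Cases 2--4 are now extracted from this identity together with the middle equation. If $m_2 = 0$, then $n_2 p = 0$ forces $\cos y = 0$; combined with $s = 0$ this yields $x = 0$, and $\vec\alpha\cdot\vec\beta = 4xy = 0$ follows from $\im(w_1^2)$; this is case 2. If $m_2 \neq 0$, then $s = 0$ translates into $\tanh x/x = \tan y/y =: \lambda$ (any scenario with $\cos y = 0$ is pushed back into case 2 by the same forcing). Substituting $\sinh x = \lambda x\cosh x$ and $\sin y = \lambda y\cos y$ into
\[
r = (x\sinh x\cos y + y\cosh x\sin y)/(x^2+y^2)
\]
collapses $r$ to $\lambda p$, so the middle equation $m_2 r = n_2 p$ becomes the scalar proportionality $\lambda = n_2/m_2 = \tan(\beta/2)/(\beta/2)$; together with $\tanh x/x = \tan y/y = \lambda$ this is case 3. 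The degenerate $x = y = 0$ gives $m_1 = n_1 = 1$, so the middle equation reduces directly to $m_2 = n_2$, i.e.\ $\tan(\beta/2) = \beta/2$, which is case 4. A short substitution using $\sinh x\sin y = \lambda^2 xy\cosh x\cos y$, $\vec\alpha\cdot\vec\beta = 4xy$, and $m_2\lambda = n_2$ then shows that $\im(A) = 0$ follows automatically in each case, so no extra constraint is imposed. The argument is bidirectional, so each listed condition is both necessary and sufficient. The main obstacle is precisely the algebraic collapse $r = \lambda p$ under $s = 0$: without this simplification the middle equation mixes $\cosh,\sinh,\cos,\sin$ of $x,y,\beta/2$ in an intractable way, and the rest is careful bookkeeping across the boundary branches ($\cos y = 0$, $x=0$, $y=0$, $\cos(\beta/2)=0$) to verify that every logical branch lands in exactly one of the listed conditions.
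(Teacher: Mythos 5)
Your proof is correct and follows essentially the same route as the paper: both expand $\exp(a+b)\exp(-b)$ via the closed-form exponentials of Proposition \ref{expon}, impose reality of the components in the Hermitian basis, and arrive at the same transcendental system ($\im(n_1)=0$ together with the $\vec\beta$- and $\mathbf{i}e_0$-component equations) followed by the same case analysis over the branches $\cos(\beta/2)=0$, $\cos y=0$, $x=0$, $y=0$. The only cosmetic differences are that the paper first normalizes $\vec\alpha=\alpha e_1$ using the $\Ad(\SU(2))$-isometries of Proposition \ref{isom1} where you instead decompose $\im\vec B$ in the basis $\{\vec\alpha,\vec\beta,\vec\alpha\times\vec\beta\}$, and that the paper extracts $\tan y/y=\tan(\beta/2)/(\beta/2)$ from the $\mathbf{i}e_0$-coefficient equation while you obtain it from the $\vec\beta$-coefficient after the collapse $r=\lambda p$ and then verify that the $\mathbf{i}e_0$ equation holds automatically.
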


\begin{proof}
On the base of Proposition \ref{isom1}, we can assume that $a=\alpha e_1$, $\alpha>0$.	
It follows from (\ref{e1}), (\ref{e2}) that the matrix $\exp(a+b)exp(-b)$ is Hermitian if and only if
\begin{equation}
\label{oo}
\exp(b)\exp(a-b)=\exp(a+b)\exp(-b).
\end{equation}
Set
$$w_{+}=\frac{1}{2}\sqrt{\alpha^2-\beta^2+2{\bf i}\alpha\alpha_4},\quad w_{-}=\frac{1}{2}\sqrt{\alpha^2-\beta^2-2{\bf i}\alpha\alpha_4}.$$
Let $x={\rm Re}(w_{+})$, $y={\rm Im}(w_{+})$. Then, as is easy to see, $w_{-}=x-{\bf i}y$.
	
Let
$$z=\alpha^2-\beta^2+2\alpha\alpha_4{\bf i}:=r e^{\bf i\varphi}= r(\cos\varphi+{\bf i}\sin\varphi),\quad\mbox{where }
r^2=(\alpha^2-\beta^2)^2+4\alpha^2\alpha^2_4,$$
$$\cos\varphi=\frac{\alpha^2-\beta^2}{r},\,\,\, \sin\varphi=\frac{2\alpha\alpha_4}{r},\,\,\, \varphi=\sgn(\alpha_4)\arccos\frac{\alpha^2-\beta^2}{r}.$$
Then
\begin{equation}
\label{wpl} 
w_{+}= \frac{\pm\sqrt{r}}{2}(\cos(\varphi/2)+\sin(\varphi/2){\bf i}),\,\,\, 4xy=\alpha\alpha_4.
\end{equation}
	
Let us consider the case when $4xy=\alpha\alpha_4\neq 0$. Set
$$m_{+}={\rm ch}(w_{+})={\rm ch}x\cos{y}+{\bf i}{\rm sh}{x}\sin{y},\quad m_{-}={\rm ch}(w_{-})=\overline{m_{+}};$$
$$n_{+}=\frac{{\rm sh}(w_{+})}{w_{+}}=\frac{{\rm sh}(x+{\bf i}y)}{x+{\bf i}y}=\frac{x{\rm sh}x\cos{y}+y{\rm ch}x\sin{y}}{x^2+y^2}+{\bf i}\frac{x{\rm ch}x\sin{y}-y{\rm sh}x\cos{y}}{x^2+y^2},$$
$$n_{-}=\frac{{\rm sh}(w_{-})}{w_{-}}=\frac{{\rm sh}(x-{\bf i}y)}{x-{\bf i}y}=\overline{n_{+}}.$$
By virtue of the equalities (\ref{hh}) and the previous equality from the proof of Corollary 1, the left-hand side of (\ref{oo}) is equal to
$$(2\cos\frac{\beta}{2}e_0+\frac{2}{\beta}\sin\frac{\beta}{2}b)(2m_{-}e_0+n_{-}(\alpha e_1-b))=2\cos\frac{\beta}{2}m_{-}e_0+\frac{2}{\beta}\sin\frac{\beta}{2}m_{-}b$$
$$+\cos\frac{\beta}{2}n_{-}(\alpha e_1-b)+\frac{2}{\beta}\sin\frac{\beta}{2}n_{-}(\alpha(be_1)-b^2),$$
and the right side of (\ref{oo}) is equal to
$$(2m_{+}e_0+n_{+}(\alpha e_1+b))(2\cos\frac{\beta}{2}e_0-\frac{2}{\beta}\sin\frac{\beta}{2}b)=2\cos\frac{\beta}{2}m_{+}e_0-\frac{2}{\beta}\sin\frac{\beta}{2}m_{+}b$$
$$+\cos\frac{\beta}{2}n_{+}(\alpha e_1+b)-\frac{2}{\beta}\sin\frac{\beta}{2}n_{+}(\alpha(e_1b)+b^2).$$
Then it follows from (\ref{oo}) that
$$2\cos\frac{\beta}{2}{\rm Im}(m_{+}){\bf i}e_0-\frac{2}{\beta}\sin\frac{\beta}{2}{\rm Re}(m_{+})b+\alpha\cos\frac{\beta}{2}{\rm Im}(n_{+})e_1+
\cos\frac{\beta}{2}{\rm Re}(n_{+})b=$$
\begin{equation}
\label{kk}
\frac{1}{\beta}\sin\frac{\beta}{2}\left[n_{+}(\alpha (e_1b)+b^2)+n_{-}(\alpha (be_1)-b^2)\right].
\end{equation}
Let us calculate the expression in square brackets:
$n_{+}(\alpha (e_1b)+b^2)+n_{-}(\alpha (be_1)-b^2)=$
$$=({\rm Re}(n_{+})+{\bf i}{\rm Im}(n_{+}))(\alpha (e_1b)+b^2)+({\rm Re}(n_{+})-{\bf i}{\rm Im}(n_{+}))(\alpha (be_1)-b^2)$$
$$=\alpha{\rm Re}(n_{+})(e_1b+be_1)+{\bf i}{\rm Im}(n_{+})(\alpha(e_1b-be_1)+2b^2).$$
Note, using (\ref{ll}), that
$$e_1b+be_1=\alpha_4{\bf i}e_0,\quad \alpha(e_1b-be_1)+2b^2=\alpha(\alpha_5e_3-\alpha_6e_2)-\beta^2e_0.$$
By substitution the resulting expressions into (\ref{kk}) and equating the coefficients of the vectors ${\bf i}e_0,e_4,e_5,e_6$, and taking into account the equality $\alpha\alpha_4=4xy$, we obtain
\begin{equation}
\label{pp}
\cos\frac{\beta}{2}{\rm Im}(m_{+})=\frac{2xy}{\beta}\sin\frac{\beta}{2}{\rm Re}(n_{+})-\frac{\beta}{2}\sin\frac{\beta}{2}{\rm Im}(n_{+}),
\end{equation}
\begin{equation}
\label{pp2}
\alpha_4\left(\cos\frac{\beta}{2}{\rm Re}(n_{+})-\frac{2}{\beta}\sin\frac{\beta}{2}{\rm Re}(m_{+})\right)+\alpha\cos\frac{\beta}{2}{\rm Im}(n_{+})=0,
\end{equation}
$$\alpha_5\left(\cos\frac{\beta}{2}{\rm Re}(n_{+})-\frac{2}{\beta}\sin\frac{\beta}{2}{\rm Re}(m_{+})\right)=
-\frac{\alpha\alpha_6}{\beta}\sin\frac{\beta}{2}{\rm Im}(n_{+}),$$
$$\alpha_6\left(\cos\frac{\beta}{2}{\rm Re}(n_{+})-\frac{2}{\beta}\sin\frac{\beta}{2}{\rm Re}(m_{+})\right)=\frac{\alpha\alpha_5}{\beta}\sin\frac{\beta}{2}{\rm Im}(n_{+}).$$
From the last two equalities it follows that either $\alpha_5=\alpha_6=0$ or the coefficients of the variables $\alpha_5$, $\alpha_6$ are equal to zero.
In the first case, the vector $\vec{\beta}$ is collinear to $\vec{\alpha}$, and the matrix $\exp(a+b)\exp(-b)$ is Hermitian by Remark 4.
	
In the second case, taking into account (\ref{pp2}),
\begin{equation}
\label{jj}
{\rm Im}(n_{+})=0,\quad \cos\frac{\beta}{2}{\rm Re}(n_{+})=\frac{2}{\beta}\sin\frac{\beta}{2}{\rm Re}(m_{+}).
\end{equation}
It follows from the first equality in (\ref{jj}) that
$$\sin{y}=\frac{y{\rm sh}x\cos{y}}{x{\rm ch}x},\quad n_{+}=\frac{{\rm sh} x}{x}\cos{y}.$$
Then the equality (\ref{pp}) and the second equality in (\ref{jj}) can be written as
$$\frac{\beta}{2}\cos\frac{\beta}{2}\sin{y}=y\sin\frac{\beta}{2}\cos{y},\quad \frac{\beta}{2}\cos\frac{\beta}{2}{\rm sh}x=x\sin\frac{\beta}{2}{\rm ch}x.$$
Thus, for $\vec{\alpha}\not\parallel\vec{\beta}$ and $\vec{\alpha}\cdot\vec{\beta}\neq 0$ the matrix $\exp(a+b )\exp(-b)$ is Hermitian iff
$$\frac{{\rm th}x}{x}=\frac{{\rm tg}y}{y}=\frac{{\rm tg}(\beta/2)}{\beta/2}.$$
	
Let us consider the case $x=0$, $y\neq 0$. Then $\alpha_4=0,$ $m_{+}=\cos{y}$, $n_{+}=\frac{\sin{y}}{y}$ and the equality (\ref{kk}) is equivalent to the equality
$(\beta/2)\cos(\beta/2)\sin{y}=y\sin(\beta/2)\cos{y}.$ Then $\cos(\beta/2)=\cos{y}=0$  or
$$\frac{{\rm tg}y}{y}=\frac{{\rm tg}(\beta/2)}{\beta/2}.$$
	
Let us consider the case $x\neq 0$, $y=0$. Then $\alpha_4=0,$ $m_{+}={\rm ch}x$, $n_{+}=\frac{{\rm sh}x}{x}$ and the equality (\ref{kk}) is equivalent to the equality
$$\frac{{\rm th}x}{x}=\frac{{\rm tg}(\beta/2)}{\beta/2}.$$
	
Finally, let $x=y=0$. Then $m_{+}=n_{+}=1$ and the equality (\ref{kk}) is equivalent to the equality $${\rm tg}(\beta/2)=\beta/2.$$
	
Proposition \ref{osn} is proved.
\end{proof}

\section{Sub-Lorentzian nonspacelike abnormal extremals on $\GL^{+}(2,\mathbb{C})$}
\label{abn}

Let us search for abnormal extremals of the left-invariant sub-Lorentzian (anti)metric $d$ on $\GL^{+}(2,\mathbb{C})$.

By sequential substitution of $w=e_i$, $i=0,\dots,6$, into (\ref{hame2}), we get
\begin{equation}
\label{vw1}
\psi_0'=0,\quad\psi_1'=u_2\psi_6-u_3\psi_5,\quad \psi_2'=-u_1\psi_6+u_3\psi_4,\quad\psi_3'=u_1\psi_5-u_2\psi_4,
\end{equation}
\begin{equation}
\label{vw2}
\psi_4'=u_2\psi_3-u_3\psi_2,\quad \psi_5'=-u_1\psi_3+u_3\psi_1,\quad \psi_6'=u_1\psi_2-u_2\psi_1.
\end{equation}

Now let $M_0=0$ in (\ref{M1}). 
Then obviously $\psi_i(t)\equiv 0$, $i=0,1,2,3$, and taking into account (\ref{vw1}), (\ref{vw2}),
$\psi_k(t)\equiv -\alpha_k$, $k=4,5,6$, where $\alpha_4^2+\alpha_5^2+\alpha_6^2>0$, and for a measurable function $\varkappa(t)$, $t\in\mathbb{R}$,
\begin{equation}
\label{uk}
u_1(t)=\frac{\alpha_4{\rm sh}\varkappa(t)}{\sqrt{\alpha_4^2+\alpha_5^2+\alpha_6^2}},\quad 
u_2(t)=\frac{\alpha_5{\rm sh}\varkappa(t)}{\sqrt{\alpha_4^2+\alpha_5^2+\alpha_6^2}}, \quad
u_3(t)=\frac{\alpha_6{\rm sh}\varkappa(t)}{\sqrt{\alpha_4^2+\alpha_5^2+\alpha_6^2}},
\end{equation} 

\begin{equation}
\label{uz}	
u_0(t)={\rm ch}\varkappa(t),\,\,\mbox{если } u(t)\in\partial U.
\end{equation}

Some extremals can be both normal and abnormal with respect to different covector functions; such abnormal extremals are called nonstrictly abnormal. An abnormal extremal is called strictly abnormal if it is not nonstrictly abnormal.

\begin{proposition}
\label{nstrvr}
A parameterized by arclength timelike extremal in $(\GL^{+}(2,\mathbb{C}),d)$ is nonstrictly abnormal if and only if it is a 1-parameter subgroup with an initial unit vector in $(H, \langle\cdot,\cdot\rangle),$ or its left shift.
\end{proposition}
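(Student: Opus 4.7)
My plan is to compare the control laws of normal and abnormal parameterized-by-arclength timelike extremals: two such extremals coincide on an interval if and only if their Lipschitzian controls $u(t)\in\partial U$ coincide almost everywhere, so the central task is to match the two formulas.

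First, for a normal timelike extremal $g(t)=\exp(tA')\exp(-tB)$ given by Theorem \ref{main2}, with $A'=\sum_{i=0}^{6}\alpha_ie_i$ and $B=\sum_{i=4}^{6}\alpha_ie_i$, I would compute $u(t)=g(t)^{-1}g'(t)$ directly. Using that $A'$ commutes with itself, this yields
\[
u(t)=\Ad(\exp(tB))(A'-B)=\alpha_0 e_0+\Ad(\exp(tB))(\vec{\alpha}),
\]
where $\vec{\alpha}:=\alpha_1e_1+\alpha_2e_2+\alpha_3e_3\in H_0$. Since $\Ad(\SU(2))$ fixes $e_0$ (Proposition \ref{isom}) and the commutation relations (\ref{b4}) show that $\ad_B$ acts on $H_0$ as the cross product with $\vec{\beta}:=\alpha_4e_1+\alpha_5e_2+\alpha_6e_3\in H_0$, the spatial part $\Ad(\exp(tB))(\vec{\alpha})$ is the rotation of $\vec{\alpha}$ about the axis $\vec{\beta}$ with angular speed $|\vec{\beta}|$.

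Second, for an abnormal timelike extremal, the formulas (\ref{uk})--(\ref{uz}) give
\[
u(t)={\rm ch}\,\varkappa(t)\cdot e_0-{\rm sh}\,\varkappa(t)\cdot\hat{b},\qquad \hat{b}:=\frac{\alpha_4e_1+\alpha_5e_2+\alpha_6e_3}{\sqrt{\alpha_4^2+\alpha_5^2+\alpha_6^2}}\in H_0,
\]
so the spatial part is confined to the fixed line $\mathbb{R}\hat{b}\subset H_0$. Matching the two control formulas: equating $e_0$-components forces ${\rm ch}\,\varkappa(t)\equiv\alpha_0$, hence $|{\rm sh}\,\varkappa(t)|$ is constant and so by continuity $\varkappa$ itself is a constant. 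Matching spatial parts then requires $\Ad(\exp(tB))(\vec{\alpha})$ to be a fixed vector, which by the rotation description happens only when $\vec{\alpha}=0$, $\vec{\beta}=0$, or $\vec{\alpha}\parallel\vec{\beta}$. In each of these three cases, the (unlabeled) proposition stated immediately after Theorem \ref{main3} collapses $g(t)$ to the 1-parameter subgroup $\exp\bigl(t\sum_{i=0}^{3}\alpha_ie_i\bigr)$, whose initial vector lies in $H$ and has unit pseudonorm by (\ref{norm}).

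For the converse, given a unit timelike vector $A=\alpha_0e_0+\vec{\alpha}\in H$, I would realize $\exp(tA)$ as a normal extremal by taking $\vec{\beta}=0$ in Theorem \ref{main2}, and as an abnormal extremal by choosing the abnormal covector's components $(\alpha_4,\alpha_5,\alpha_6)$ proportional to $(\alpha_1,\alpha_2,\alpha_3)$ (or any nonzero triple when $\vec{\alpha}=0$) together with $\varkappa(t)\equiv c$ satisfying ${\rm ch}\,c=\alpha_0$, ${\rm sh}\,c=-|\vec{\alpha}|$; this reproduces $u(t)\equiv A$. Left-invariance of the sub-Lorentzian structure and of the Pontryagin minimum principle then extends the conclusion to all left shifts of such 1-parameter subgroups. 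The hard part will be the rigidity step: showing that a rotation orbit in $\mathbb{R}^{3}$ can lie on a single fixed line only when the initial vector is zero or sits on the rotation axis. This is elementary linear algebra, but it is exactly what forces a nonstrictly abnormal timelike extremal to degenerate from a genuine product of two 1-parameter subgroups into a single 1-parameter subgroup in $\exp(H)$.
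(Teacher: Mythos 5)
Your proposal is correct and follows essentially the same route as the paper: both directions reduce to the observation that a nonstrictly abnormal extremal must satisfy the normal and the abnormal control laws simultaneously, so matching the $e_0$-components forces $\varkappa(t)\equiv\varkappa_0$ and hence a constant control, while the converse is obtained by exhibiting exactly the same two covector functions (the timelike one with $\alpha_4=\alpha_5=\alpha_6=0$, and the purely $\mathfrak{su}(2)$-valued one proportional to $\vec{\alpha}$, or arbitrary nonzero when $\vec{\alpha}=0$). The only difference is that your forward direction detours through the explicit rotating form $\alpha_0e_0+\Ad(\exp(tB))(\vec{\alpha})$ of the normal control and the resulting collinearity of $\vec{\alpha}$ and $\vec{\beta}$, whereas the paper concludes directly from (\ref{uk}) that all $u_i$ are constant once $\varkappa$ is, so that $g$ is a 1-parameter subgroup without any rotation analysis.
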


\begin{proof}
Due to the left-invariance of the sub-Lorentzian structure, we can assume that $g(0)=e$ for the considered extremal $g(t)$, $t\in\mathbb{R}.$	
	
Let us assume that this extremal is nonstrictly abnormal.
	
It follows from (\ref{uk}), (\ref{uz}) and the proof of Theorem \ref{main2} that
$$u(t)=g^{-1}(t)g'(t)=\alpha_0e_0-\sum\limits_{i=1}^{3}u_i(t)e_i,\quad \alpha_0=\sqrt{1+\alpha_1^2+\alpha_2^2+\alpha_3^2};\, \alpha_1,\alpha_2,\alpha_3\in\mathbb{R},$$
$$u_0(t)={\rm ch}\varkappa(t)\equiv\alpha_0,\quad \varkappa(t)\equiv\varkappa_0,\quad u_i(t)=\const=-\alpha_i,\, i=1,2,3.$$
It follows from (\ref{sg}) that
\begin{equation}
\label{onep} 
g(t)=\exp\left(t\sum_{i=0}^3\alpha_ie_i\right),\, t\in\mathbb{R},\quad \alpha_0=\sqrt{1+\alpha_1^2+\alpha_2^2+\alpha_3^2}.
\end{equation}
	
Conversely, let a 1-parameter subgroup in $\GL^{+}(2,\mathbb{C})$ of the form (\ref{onep}) be given.  
Then (\ref{onep}) is a parameterized by arclength timelike normal extremal  with constant control $u(t)=\sum_{i=0}^3\alpha_ie_i,$
defined according to Theorem \ref{main} by the constant covector function $\psi(t)=(\alpha_0,-\alpha_1,-\alpha_2,-\alpha_3,0,0,0)$.
	
It is easy to check, using (\ref{vw1}), (\ref{vw2}), that the extremal (\ref{onep}) is abnormal with respect to the covector function
$$\psi(t)=\left(0,0,0,0,\frac{-\alpha_1}{\sqrt{\alpha_0^2-1}},\frac{-\alpha_2}{\sqrt{\alpha_0^2-1}},\frac{-\alpha_3}{\sqrt{\alpha_0^2-1}}\right),\quad\mbox{if }\,\alpha_0>1;$$
$$\psi(t)=(0,0,0,0,-\alpha_4,-\alpha_5,-\alpha_6),\,\,\alpha_4^2+\alpha_5^2+\alpha_6^2>0,\quad\mbox{if }\,\alpha_0=1.$$
In the last case we have $\varkappa(t)\equiv 0.$
\end{proof}

Note that due to Proposition \ref{nstrvr}, for any nonconstant smooth function $\varkappa(t),$ $t\in\mathbb{R},$ defining the control 
$u(t)=u_0(t)e_0-\sum\limits_{i=1}^{3}u_i(t)e_i$ by formulas (\ref{vw1}), (\ref{vw2}), a solution $g=g(t)$ of the ordinary differential equation (\ref{sg}) with the initial condition $g(0)=e$ (with the maximum possible connected domain $t\in J\subset\mathbb{R}$) is a strictly abnormal extremal. For example, in the case 
$\alpha_4=\alpha_5=0,$ $\alpha_6=1,$ $\varkappa(t)=t$ we get the strictly abnormal extremal
$$g(t)=\exp\left(2{\rm sh}(t/2)e_0+2(1-{\rm ch}(t/2))e_3\right)=\left(\begin{array}{cc}
\exp(1-e^{-t/2}) & 0  \\
0 & \exp(e^{t/2}-1)
\end{array}\right).$$

It is natural to assume that an abnormal isotropic extremal is defined by equations (\ref{vw1}), (\ref{vw2}) and
\begin{equation}
\label{ukisotr}
u_1(t)=\frac{\alpha_4\varkappa(t)}{\sqrt{\alpha_4^2+\alpha_5^2+\alpha_6^2}},\quad
u_2(t)=\frac{\alpha_5\varkappa(t)}{\sqrt{\alpha_4^2+\alpha_5^2+\alpha_6^2}}, \quad
u_3(t)=\frac{\alpha_6\varkappa(t)}{\sqrt{\alpha_4^2+\alpha_5^2+\alpha_6^2}},\quad
\end{equation}
\begin{equation}
\label{uzisotr}	
u_0(t)=|\varkappa(t)|>0
\end{equation}
provided that $\alpha_1=\alpha_2=\alpha_3=0$ and $\alpha_4^2+\alpha_5^2+\alpha_6^2>0.$ 

Then, similarly to Proposition \ref{nstrvr}, the following proposition is proved.

\begin{proposition}
\label{nstrisotr}	
An isotropic extremal in $(\GL^{+}(2,\mathbb{C}),d)$ is nonstrictly abnormal if and only if it is a 1-parameter subgroup with an initial nonzero isotropic vector in $(H,\langle \cdot,\cdot\rangle)$ or its left shift.
\end{proposition}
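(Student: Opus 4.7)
The plan is to replicate the argument of Proposition~\ref{nstrvr} almost line for line, substituting the isotropic normal description from Theorem~\ref{isotrn} for the timelike one from Theorem~\ref{main} and the isotropic abnormal equations (\ref{ukisotr})--(\ref{uzisotr}) for the timelike ones (\ref{uk})--(\ref{uz}). Left-invariance of the sub-Lorentzian structure lets me reduce to the case $g(0)=e,$ the ``or its left shift'' clause being automatic.

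For the forward direction, suppose the isotropic extremal is nonstrictly abnormal. In its normal role Theorem~\ref{isotrn} gives the control $u(t)=e_0-\sum_{i=1}^{3}\psi_i(t)e_i$ with $\psi_0(t)\equiv 1$ and with the $\psi_i$ solving the smooth system (\ref{vw1})--(\ref{vw2}), hence continuous. In its abnormal role (\ref{ukisotr})--(\ref{uzisotr}) give $u_0(t)=|\varkappa(t)|$ and $u_i(t)=\alpha_{i+3}\varkappa(t)/\sqrt{\alpha_4^2+\alpha_5^2+\alpha_6^2}$ with $\alpha_4^2+\alpha_5^2+\alpha_6^2>0.$ Matching the two formulas for $u_0$ forces $|\varkappa(t)|\equiv 1,$ and since at least one $\alpha_{i+3}$ is nonzero, the continuity of $u_i$ upgrades this to a constant $\varkappa(t)\equiv\varepsilon\in\{+1,-1\}.$ Therefore the control is constant, $g(t)=\exp(tX)$ with $X=e_0-\sum_{i=1}^{3}u_ie_i\in H,$ and $\langle X,X\rangle=1-\varepsilon^2=0,$ so $X$ is a nonzero future-directed isotropic vector in $H.$

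For the reverse direction, let $g(t)=\exp(tX)$ with $X=e_0+\sum_{i=1}^{3}\gamma_ie_i\in H,$ $\sum\gamma_i^2=1.$ A normal covector function is furnished by $\psi_0\equiv 1,$ $\psi_i\equiv -\gamma_i$ for $i=1,2,3,$ $\psi_k\equiv 0$ for $k=4,5,6;$ substituting this constant covector into (\ref{vw1})--(\ref{vw2}) makes every right-hand side vanish (the first three equations kill themselves because $\psi_4=\psi_5=\psi_6=0,$ and the last three cancel by antisymmetric pairs of the form $u_2\psi_3-u_3\psi_2=\gamma_2\gamma_3-\gamma_3\gamma_2=0$), while (\ref{min1}) holds with $M_0=-1.$ An abnormal covector function is furnished by $\psi_i\equiv 0$ for $i=0,1,2,3$ and $\psi_{i+3}\equiv -\gamma_i$ for $i=1,2,3;$ the same cross-product cancellations make (\ref{vw1})--(\ref{vw2}) hold and $M_0=0.$ These two simultaneous covector data show that the subgroup is both normal and abnormal, hence nonstrictly abnormal.

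The one step where I anticipate having to be careful is the upgrade from $|\varkappa|\equiv 1$ (a~priori only measurable) to $\varkappa\equiv\varepsilon$ (constant): this is where I rely on the smoothness of the normal covector to force $u_i=\psi_i$ to be continuous, which pins the sign of $\varkappa$ through the proportionality $u_i=\alpha_{i+3}\varkappa/\sqrt{\alpha_4^2+\alpha_5^2+\alpha_6^2}$ with nonzero denominator and at least one nonzero $\alpha_{i+3}.$ Everything else is mechanical bookkeeping with the structure constants (\ref{b1})--(\ref{b4}), exactly as in Proposition~\ref{nstrvr}.
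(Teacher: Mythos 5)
Your proposal is correct and follows exactly the route the paper intends: the paper's own ``proof'' is the single sentence that the statement is established similarly to Proposition~\ref{nstrvr}, and your adaptation (normalization $u_0=\psi_0\equiv 1$ from Theorem~\ref{isotrn} forcing $|\varkappa|\equiv 1$ in (\ref{ukisotr})--(\ref{uzisotr}), then constant control, plus the explicit pair of normal and abnormal covectors for the converse) is precisely that argument with the isotropic substitutions. If anything you are slightly more careful than the paper, which already in the timelike case passes from $\mathrm{ch}\,\varkappa(t)\equiv\alpha_0$ to $\varkappa(t)\equiv\varkappa_0$ without the continuity remark you supply to pin down the sign.
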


Assuming $\alpha_4=\alpha_5=0$, $\alpha_6=1,$ $\varkappa(t)=\frac{e^t}{2},$ we obtain a strictly abnormal isotropic extremal
$$g(t)=\exp((e^{t/2}-1)(e_0-e_3))=
\left(\begin{array}{cc}
1 & 0  \\
0 & \exp(e^{t/2}-1)
\end{array}\right),\, t\in\mathbb{R}.$$

On base of Theorem \ref{onepa} and Propositions \ref{nstrvr}, \ref{nstrisotr} we obtain

\begin{corollary}
\label{nonstr}
Each segment of a nonspacelike nonstrictly abnormal extremal is a longest arc in $(\GL^+(2,\mathbb{C}),d)$, parametrized proportionally to the arclength for the timelike extremal.
\end{corollary}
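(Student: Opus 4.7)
The plan is to combine Propositions \ref{nstrvr} and \ref{nstrisotr} (which characterize nonstrictly abnormal nonspacelike extremals as 1-parameter subgroups with initial tangent vector in $H$, up to left shifts) with Theorem \ref{onepa} (which asserts the longest-arc property for segments of such subgroups). Since the left-invariant (anti)metric $d$ is preserved by left shifts, it suffices to treat extremals with $g(0)=e$; an arbitrary segment of $g_0 g(t)$ has the same length as the corresponding segment of $g(t)$ and joins endpoints with the same $d$-distance.

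First I would apply Proposition \ref{nstrvr} in the timelike case and Proposition \ref{nstrisotr} in the isotropic case to conclude that the extremal through $e$ has the form $g(t)=\exp(tX)$, where $X=\alpha_0 e_0+X_0$ is a future directed nonspacelike vector in $H$ with $X_0\in H_0$ and $\alpha_0>0$. Since $e_0=\sigma_0/2$ is central in $\mathfrak{gl}^+(2,\mathbb{C})$, the exponent splits multiplicatively:
\begin{equation*}
g(t)=\exp(tX)=e^{\alpha_0 t/2}\exp(tX_0),
\end{equation*}
where $\exp(tX_0)$ is a 1-parameter subgroup in $\SL(2,\mathbb{C})$ with tangent $X_0\in H_0$, i.e.\ of the type treated in Theorem \ref{ppp}. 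A linear reparametrization $s=\alpha_0 t$ rewrites this as $g(s)=e^{s/2}\gamma(s)$ with $\gamma(s)=\exp((s/\alpha_0)X_0)$ a 1-parameter subgroup with $\gamma'(0)=X_0/\alpha_0\in H_0$, which is precisely the form required by Theorem \ref{onepa}.

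Applying Theorem \ref{onepa} then shows that every segment of this reparametrized subgroup is a longest arc in $(\GL^+(2,\mathbb{C}),d)$, and in the timelike case the parametrization is proportional to arclength (which translates back to unit-speed parametrization for the original $\exp(tX)$ thanks to the normalization $\langle X,X\rangle=1$ from Proposition \ref{nstrvr}). In the isotropic case $\alpha_0=1$ directly, so no reparametrization is needed. Finally, reinstating the base point $g_0\in \GL^+(2,\mathbb{C})$ by left translation yields the statement for an arbitrary nonstrictly abnormal nonspacelike extremal.

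There is essentially no obstacle here beyond bookkeeping: the content is already in Propositions \ref{nstrvr}, \ref{nstrisotr} and Theorem \ref{onepa}. The only point demanding care is to verify the reparametrization and the matching of normalizations ($\alpha_0=\sqrt{1+|X_0|^2}$ in the timelike case, $\alpha_0=|X_0|=1$ in the isotropic case), but this is a routine calculation exploiting that $e_0$ is a scalar matrix and hence central.
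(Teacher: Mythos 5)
Your proposal is correct and follows exactly the paper's route: the paper derives Corollary \ref{nonstr} directly from Theorem \ref{onepa} together with Propositions \ref{nstrvr} and \ref{nstrisotr}, with no further argument given. The details you supply — splitting $\exp(tX)=e^{\alpha_0 t/2}\exp(tX_0)$ via centrality of $e_0$, the linear reparametrization matching the form $e^{s/2}\gamma(s)$ of Theorem \ref{onepa}, and the reduction to $g(0)=e$ by left invariance — are precisely the routine bookkeeping the paper leaves implicit.
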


\section{Addition about Lie groups related to the group $\GL^+(2,\mathbb{C})$}
\label{append}

The following inclusions of Lie groups are valid:

$${\rm SU}(2)\subset \SL(2,\mathbb{C})\subset \GL^+(2,\mathbb{C})\subset \GL(2,\mathbb{C}),$$

$${\rm U}(2)\subset S^1I\cdot \SL(2,\mathbb{C})\subset \GL(2,\mathbb{C}),$$
where $S^1=(\{z\in \mathbb{C}: |z|=1\},\cdot)$ is a multiplicative group, $S^1I$ is a group of
diagonal $(2\times 2)$-matrices with elements $z\in S^1$ on the diagonal; 
$S^1I\cdot\SL(2,\mathbb{C})$ consists of matrices in $\GL(2,\mathbb{C})$ with modulus of determinant equal to $1.$
Moreover, ${\rm SU}(2)$ is the maximal compact Lie subgroup in $\SL(2,\mathbb{C}),$ $\GL^+(2,\mathbb{C});$ ${\rm U}(2)=S^1I\cdot {\rm SU}(2)$ is a maximal compact Lie subgroup in $S^1I\cdot\SL(2,\mathbb{C}),$ $\GL(2,\mathbb{C}).$

The Lie group $\SL(2,\mathbb{C})$ is an algebraic linear group (see Example 1, Section 3.1 in \cite{VinOn88});
$\SL(2,\mathbb{C})$ is a self-adjoint group: $g^{\ast}\in \SL(2,\mathbb{C})$ if $g\in\SL(2,\mathbb{C}).$
Due to Theorem 6.6, item ''6.4. Polar decomposition'' in \cite{VinGorbOn90} the following theorem is true. 

\begin{theorem}
\label{dec}
There exists a polar decomposition $\SL(2,\mathbb{C})=P\cdot{\rm SU}(2),$ where $P$ is a submanifold of $\SL(2,\mathbb{C}) $ consisting of all positive
definite self-adjoint operators, and the representation
$$g=p\cdot k,\,\,\mbox{where } g\in\SL(2,\mathbb{C}),\,\,p\in P,\,\,k\in{\rm SU}(2),$$
is the only one. More precisely,
the mapping $\varphi: H_0\times{\rm SU}(2)\rightarrow\SL(2,\mathbb{C}),$ given by the formula $\varphi(y,k)=\exp(y)k,$ is a diffeomorphism.
\end{theorem}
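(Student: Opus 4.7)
The plan is to reduce Theorem \ref{dec} to two ingredients: the classical existence and uniqueness of a positive-definite Hermitian square root of a positive-definite Hermitian matrix, and the already established Corollary \ref{diff} which says that $\exp\colon H_0\to\exp(H_0)$ is a diffeomorphism. First I would verify that $P=\exp(H_0)$. By the spectral theorem every positive-definite Hermitian $(2\times 2)$-matrix $p$ can be written uniquely as $p=\exp(y)$ for some Hermitian $y$, and since $\det\exp(y)=\exp(\tr y)$, the condition $p\in \SL(2,\mathbb{C})$ is equivalent to $\tr y=0$, i.e. $y\in H_0$. Together with Corollary \ref{diff} this identifies $P$ with $H_0$ as smooth manifolds via $\exp$.

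Next I would establish the decomposition itself. Given $g\in\SL(2,\mathbb{C})$, the matrix $gg^{\ast}$ is Hermitian and positive definite with $\det(gg^{\ast})=|\det g|^2=1$, so $gg^{\ast}\in P$. By the first step, there is a unique $y\in H_0$ with $\exp(2y)=gg^{\ast}$; set $p:=\exp(y)\in P$. Define $k:=p^{-1}g$. A direct computation gives $kk^{\ast}=p^{-1}gg^{\ast}p^{-1}=p^{-1}p^{2}p^{-1}=I$ and $\det k=(\det p)^{-1}\det g=1$, hence $k\in\SU(2)$. This proves surjectivity of $\varphi(y,k)=\exp(y)k$. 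For uniqueness, if $p_1k_1=p_2k_2$ with $p_i\in P$, $k_i\in\SU(2)$, then $p_1^{2}=p_1k_1k_1^{\ast}p_1=p_2k_2k_2^{\ast}p_2=p_2^{2}$, and uniqueness of the positive-definite Hermitian square root (from the spectral theorem) yields $p_1=p_2$ and therefore $k_1=k_2$.

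It remains to upgrade the bijection $\varphi$ to a diffeomorphism. Smoothness of $\varphi$ is immediate, since it is the composition of the smooth exponential $H_0\to P\subset\SL(2,\mathbb{C})$ with multiplication in $\GL^+(2,\mathbb{C})$. For smoothness of $\varphi^{-1}$, I would argue that the map $g\mapsto gg^{\ast}$ is smooth into $P$, and the positive Hermitian square root on $P$ is smooth (in the $(2\times 2)$ case this is transparent from the explicit formula in terms of $\tr$ and $\det$, but in general it follows from holomorphic functional calculus, since the spectrum of a positive-definite operator is bounded away from $0$ and the principal branch of $\sqrt{\cdot}$ is holomorphic on a neighborhood). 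Hence $g\mapsto p=\sqrt{gg^{\ast}}$ is smooth, and $g\mapsto k=p^{-1}g$ together with $p\mapsto y=\log p=\exp^{-1}(p)$ (smooth by Corollary \ref{diff}) exhibit $\varphi^{-1}$ as a composition of smooth maps.

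The main obstacle, modest though it is, lies in this last step: one has to know that the positive-definite Hermitian square root depends smoothly on the matrix. A clean alternative, avoiding any functional calculus argument, would be to check that $d\varphi_{(y,k)}$ is everywhere injective between manifolds of equal dimension $3+3=6$, and invoke the global inverse function theorem together with bijectivity already proved. A tangent computation at $(0,e)$ gives $d\varphi_{(0,e)}(Y,dl_e(K))=Y+K$, which is injective because $H_0\cap\mathfrak{su}(2)=0$ by Remark \ref{rem}; left-translating on the right by $k$ and using the smooth factorization through $P$ then yields the same conclusion at every point.
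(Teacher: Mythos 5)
Your argument is correct, but it takes a genuinely different route from the paper. The paper does not prove Theorem \ref{dec} directly at all: it observes that $\SL(2,\mathbb{C})$ is a self-adjoint algebraic linear group and then invokes the general polar-decomposition theorem for such groups (Theorem 6.6 in the cited survey of Vinberg, Gorbatsevich and Onishchik), so the ``proof'' is a one-line reduction to an external structure theorem. You instead give a self-contained elementary proof: identify $P=\exp(H_0)$ via the spectral theorem and the trace condition, produce the factorization from the unique positive-definite square root of $gg^{\ast}$, and get smoothness of the inverse either from the smoothness of the square root or from Corollary \ref{diff}. Each approach has its merits: the paper's is shorter and places the statement in its natural general context, while yours requires nothing beyond linear algebra and facts already established in the paper. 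Two small remarks. First, in the smoothness step you can bypass functional calculus entirely by writing $p=\sqrt{gg^{\ast}}=\exp\bigl(\tfrac12\exp^{-1}(gg^{\ast})\bigr)$, which is smooth directly by Corollary \ref{diff}; this is cleaner than invoking a holomorphic branch of the square root. Second, note that the paper states Corollary \ref{pos} (that $\exp\colon H_0\to P$ is a diffeomorphism) as a \emph{consequence} of Theorem \ref{dec}, so a reader might worry about circularity in your use of Corollary \ref{diff}; there is none, since Corollary \ref{diff} is established independently in Section 5 via the Riemannian submersion argument in the proof of Theorem \ref{ppp}, but it is worth saying so explicitly. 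Your alternative closing argument via the inverse function theorem is sound in outline, though the claim that injectivity of $d\varphi$ at $(0,e)$ propagates to all points by translation is stated a little too quickly: at a general point $(y,k)$ one has to control the $H_0$- and $\mathfrak{su}(2)$-components of $\exp(-y)\,d\exp_y(Y)$ using $[H_0,H_0]\subset\mathfrak{su}(2)$ and $[\mathfrak{su}(2),H_0]\subset H_0$, so the first route is the one to keep.
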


Proposition \ref{iso}, Theorem \ref{dec} and Theorem 1 in \cite{Ber23} imply

\begin{corollary}
\label{quat}
There exists a diffeomorphism $\GL^+(2,\mathbb{C})=P\cdot(\mathbb{R}_+I\times{\rm SU}(2))$ (polar decomposition), where the Lie group 
$\mathbb{R}_+I\times{\rm SU}(2)$ is isomorphic to the universal covering $\tilde{{\rm U}}(2)$ of the Lie group ${\rm U}(2)$ and the multiplicative group $\mathbb{H}_0$ of nonzero quaternions.
\end{corollary}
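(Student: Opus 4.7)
The plan is to assemble the three claims by splicing together Proposition \ref{iso}, Theorem \ref{dec}, and some standard facts about the topology of $\U(2)$ and about quaternion multiplication. The polar decomposition of $\GL^+(2,\mathbb{C})$ is the easy part: Proposition \ref{iso} gives a Lie group isomorphism $\GL^+(2,\mathbb{C}) \cong \mathbb{R}_+I \times \SL(2,\mathbb{C})$, and Theorem \ref{dec} provides the diffeomorphism $\varphi:H_0\times\SU(2)\to \SL(2,\mathbb{C})$, $\varphi(y,k)=\exp(y)k=pk$ with $p\in P=\exp(H_0)$. Since $\mathbb{R}_+I$ is central in $\GL^+(2,\mathbb{C})$, the composed map $(y,\lambda,k)\mapsto \exp(y)\cdot(\lambda I)k$ is a diffeomorphism $H_0\times(\mathbb{R}_+I\times\SU(2))\to\GL^+(2,\mathbb{C})$; rewriting the image as $P\cdot(\mathbb{R}_+I\times\SU(2))$ gives the asserted polar decomposition.

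For the identification $\mathbb{R}_+I\times\SU(2)\cong \widetilde{\U}(2)$: I would observe that $\U(2)=S^1I\cdot\SU(2)$ with $S^1I\cap\SU(2)=\{\pm I\}$, so $\U(2)\cong (S^1\times\SU(2))/\mathbb{Z}_2$ and hence $\pi_1(\U(2))\cong\mathbb{Z}$. Since $\SU(2)$ is simply connected, the universal cover of $\U(2)$ is obtained by lifting the $S^1$ factor to its universal cover $\mathbb{R}$, yielding the Lie group isomorphism $\widetilde{\U}(2)\cong\mathbb{R}\times\SU(2)$. The exponential $t\mapsto e^t$ induces $(\mathbb{R},+)\cong(\mathbb{R}_+,\cdot)$, which sends this to $\mathbb{R}_+I\times\SU(2)$ (with the factor written multiplicatively as scalar matrices).

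For the identification with $\mathbb{H}_0$: the quaternion norm is multiplicative and $\mathbb{R}_+\subset \mathbb{H}$ is central, so polar decomposition $q=|q|\cdot (q/|q|)$ gives a Lie group isomorphism $\mathbb{H}_0\cong\mathbb{R}_+\times S^3$, where $S^3$ is the group of unit quaternions. The standard identification $S^3\cong\SU(2)$ (which is the content alluded to in Theorem 1 of \cite{Ber23}, via the representation of a unit quaternion $a+b\mathbf{i}+c\mathbf{j}+d\mathbf{k}$ as the matrix $\bigl(\begin{smallmatrix} a+d\mathbf{i} & b+c\mathbf{i}\\ -b+c\mathbf{i} & a-d\mathbf{i}\end{smallmatrix}\bigr)$) then produces $\mathbb{H}_0\cong\mathbb{R}_+\times\SU(2)\cong\mathbb{R}_+I\times\SU(2)$.

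The only real subtlety, and the place I would be most careful, is to verify that the right-hand side of the polar decomposition is genuinely a direct product of \emph{Lie groups} rather than just of manifolds. This hinges on the centrality of $\mathbb{R}_+I$ in $\GL(2,\mathbb{C})$, which guarantees both that $\mathbb{R}_+I\times\SU(2)$ is a Lie subgroup of $\GL^+(2,\mathbb{C})$ (with the product structure) and that the multiplication map $P\times(\mathbb{R}_+I\times\SU(2))\to\GL^+(2,\mathbb{C})$ is a diffeomorphism (the scalar factor may be freely moved past $P$). Once this is checked, all three assertions of the corollary follow immediately from the building blocks above.
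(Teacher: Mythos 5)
Your proposal is correct and follows the paper's route exactly: the paper derives this corollary in one line from Proposition \ref{iso}, Theorem \ref{dec}, and Theorem~1 of \cite{Ber23}, and your write-up simply supplies the details of how those three ingredients combine (centrality of $\mathbb{R}_+I$, lifting the $S^1$ factor of ${\rm U}(2)$, and the quaternionic polar decomposition with $S^3\cong{\rm SU}(2)$).
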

	
\begin{corollary}
\label{pos}
The mapping $\exp: H_0\rightarrow\exp(H_0)=P$ is a diffeomorphism.
\end{corollary}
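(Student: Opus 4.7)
The plan is to derive Corollary \ref{pos} directly from Theorem \ref{dec} together with Corollary \ref{diff}, by separating the statement into the set equality $\exp(H_0)=P$ and the diffeomorphism claim.

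First I would verify the inclusion $\exp(H_0)\subseteq P$. Any $y\in H_0$ is Hermitian with $\tr y=0$, so by the spectral theorem $y=u\Lambda u^{\ast}$ with $u$ unitary and $\Lambda$ real diagonal of trace zero. Then $\exp(y)=u\exp(\Lambda)u^{\ast}$ is self-adjoint, and its eigenvalues $e^{\lambda_i}$ are strictly positive, so $\exp(y)$ is positive definite. Moreover $\det\exp(y)=e^{\tr y}=1$, so $\exp(y)\in\SL(2,\mathbb{C})\cap P$.

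Next I would obtain the reverse inclusion $P\subseteq\exp(H_0)$ from the uniqueness in Theorem \ref{dec}. Given $p\in P$, the factorization $p=p\cdot I$ with $p\in P$ and $I\in\SU(2)$ is a (in fact the unique) polar decomposition. Since $\varphi(y,k)=\exp(y)k$ is a diffeomorphism of $H_0\times\SU(2)$ onto $\SL(2,\mathbb{C})$, there is a unique pair $(y_0,k_0)$ with $\exp(y_0)k_0=p$. The previous step places $\exp(y_0)$ in $P$, hence by the uniqueness of the polar decomposition we must have $k_0=I$ and $\exp(y_0)=p$. This establishes $\exp(H_0)=P$.

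Finally, the diffeomorphism statement is immediate. Either invoke Corollary \ref{diff} directly with the image now identified as $P$, or observe that the inclusion $\iota\colon H_0\hookrightarrow H_0\times\SU(2)$, $y\mapsto(y,I)$, is a smooth embedding onto a closed submanifold and that $\exp|_{H_0}=\varphi\circ\iota$, so $\exp|_{H_0}$ is a composition of smooth embeddings whose image is precisely $P=\varphi(H_0\times\{I\})$; its smooth inverse is $\pr_1\circ\varphi^{-1}|_{P}$.

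There is essentially no obstacle here, since the hard analytic content is already in Theorem \ref{dec} and in Corollary \ref{diff}. The only point that requires a word is checking that the factor $P$ of the polar decomposition coincides with $\exp(H_0)$ rather than being a priori a larger set, and this is handled by the uniqueness clause of Theorem \ref{dec} as sketched above.
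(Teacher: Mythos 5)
Your proposal is correct and follows essentially the same route as the paper, which presents Corollary \ref{pos} as an immediate consequence of the polar decomposition in Theorem \ref{dec} (with Corollary \ref{diff} already available as an alternative source of the diffeomorphism claim); your write-up merely makes explicit the two inclusions $\exp(H_0)\subseteq P$ and $P\subseteq\exp(H_0)$ that the paper leaves to the reader.
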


Since ${\rm SU}(2)$ is diffeomorphic to $S^3,$ it follows from the above that the fundamental groups $\pi_1$ of the Lie groups $\GL^{+}(2,\mathbb{C})$ and $\SL(2,\mathbb{C})$ (respectively $\GL(2,\mathbb{C})$ and $S^1I\cdot\SL(2,\mathbb{C})$) are isomorphic to the fundamental groups $\pi_1$ of the
Lie group ${\rm SU}(2)$ (respectively ${\rm U}(2)$).
Therefore all Lie groups ${\rm SU}(2),$ $\SL(2,\mathbb{C}),$
$\GL^+(2,\mathbb{C})$ are simply connected, and
$$(\mathbb{Z},+)=\pi_1(U(2))=\pi_1(S^1\cdot\SL(2,\mathbb{C}))=\pi_1(\GL(2, \mathbb{C})).$$

The following theorem follows from Corollary \ref{pos} and Proposition \ref{iso}.

\begin{theorem}
\label{h+}
Let $H^{+}$ be a manifold of all positive definite Hermitian matrices in $\GL^+(2,\mathbb{C})$.
The exponential map $\exp:H\rightarrow H^{+}$ is a diffeomorphism.
\end{theorem}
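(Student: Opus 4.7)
The plan is to reduce Theorem \ref{h+} to the already established Corollary \ref{pos}, by factoring off the scalar (central) part of a Hermitian matrix.

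First I would use the orthogonal decomposition $H=\mathbb{R}e_0\oplus H_0$, which just expresses any Hermitian $h$ as $h=se_0+h_0'$ with $s=\tr h\in\mathbb{R}$ and $h_0'\in H_0$ the traceless part. Since $e_0=I/2$ is central in $\mathfrak{gl}^+(2,\mathbb{C})$, the two summands commute, so
\begin{equation*}
\exp(h)=\exp(se_0)\exp(h_0')=e^{s/2}\exp(h_0').
\end{equation*}
By Corollary \ref{pos}, $\exp(h_0')\in P\subset\SL(2,\mathbb{C})$, and the scalar factor $e^{s/2}$ lies in $\mathbb{R}_+ I$. Hence $\exp(H)\subset\mathbb{R}_+ I\cdot P$.

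Next I would identify $H^+$ with $\mathbb{R}_+ I\cdot P$. If $A\in H^+$ then $A$ is positive definite Hermitian, so $\det A>0$; setting $\lambda:=\sqrt{\det A}>0$ and $B:=\lambda^{-1}A$, one has $\det B=1$ and $B$ is still positive definite Hermitian, so $B\in P$. The decomposition $A=\lambda B$ is manifestly unique, and both $\lambda$ and $B$ depend smoothly on $A$. Conversely, every product $\lambda B$ with $\lambda>0$, $B\in P$ is positive definite Hermitian with positive determinant, hence belongs to $H^+$. Via the isomorphism from Proposition \ref{iso}, this identifies $H^+$ smoothly with $\mathbb{R}_+ I\times P$.

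Combining these two steps, the exponential map factors as
\begin{equation*}
\exp\colon H=\mathbb{R}e_0\oplus H_0\;\xrightarrow{\;(s,h_0')\mapsto(e^{s/2},\exp h_0')\;}\;\mathbb{R}_+ I\times P\;\xrightarrow{\;\sim\;}\;H^+,
\end{equation*}
where the first arrow is a diffeomorphism since $s\mapsto e^{s/2}$ is a diffeomorphism of $\mathbb{R}$ onto $\mathbb{R}_+$ and, by Corollary \ref{pos}, $h_0'\mapsto\exp h_0'$ is a diffeomorphism of $H_0$ onto $P$; and the second arrow is a diffeomorphism by the previous paragraph and Proposition \ref{iso}. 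A composition of diffeomorphisms is a diffeomorphism, which completes the proof.

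The only nontrivial point is the identification of $H^+$ with $\mathbb{R}_+ I\times P$; once the scalar factor $\lambda=\sqrt{\det A}$ is peeled off, everything reduces to the unimodular case handled in Corollary \ref{pos}.
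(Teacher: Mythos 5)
Your proposal is correct and follows essentially the same route as the paper, which states only that the theorem ``follows from Corollary \ref{pos} and Proposition \ref{iso}''; your argument is precisely the natural filling-in of that deduction, splitting off the central factor $\mathbb{R}e_0$ (equivalently $\mathbb{R}_+I$, via $\lambda=\sqrt{\det A}$) and reducing to the unimodular case of Corollary \ref{pos}.
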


The vector space ${\bf i}H$ over $\mathbb{R}$ of all skew-Hermitian complex $(2\times 2)$-matrices with a quadratic form $\det f,$ $f\in {\bf i}H ,$ which we shall denote in this section as $\langle f,f\rangle,$ is a Minkowski space-time with the signature $(-,+,+,+)$, and $({\bf i}H=\mathfrak{u}(2),[\cdot,\cdot])$ is the Lie algebra of the Lie groups ${\rm U}(2)$ and $\tilde{U}(2).$

The corresponding left-invariant Lorentzian metric $\langle\cdot,\cdot\rangle$ on ${\rm U}(2)$ and on
$\tilde{{\rm U}}(2)\cong (\mathbb{R}_+,\cdot)\times {\rm SU}(2)$ is biinvariant. Moreover, $((\mathbb{R}_+,\cdot)\times {\rm SU}(2),\langle\cdot,\cdot\rangle)$ is isometric to Einstein Universe (space-time) with induced biinvariant Riemannian metric of the constant unit sectional curvature on ${\rm SU}(2),$ diffeomorphic to $S^3$ (Theorem 4 in \cite{Ber23}).

In Theorems 5, 7, 10 in \cite{Ber23}, the author consructed a {\it stratification} of smooth manifold ${\rm U}(2),$ i.e. strictly decreasing sequence of closed subsets ({\it strata}) in ${\rm U}(2)$ $X_0={\rm U}(2)\supset X_1\supset X_2$  such that $X_k\setminus X_{k+1},$ $k=0,1,$ are 
smooth submanifolds. For this stratification $N:={\rm U}(2)\setminus X_1$ is everywhere dense in ${\rm U}(2)$ and diffeomorphic to ${\bf i}H,$ $X_1={\rm U}(2)\setminus N$ is homeomorphic to $S^3$ with a pair of diametrically opposite points identified at one point $x_0$,
$X_2=\{x_0\},$ $X_1\setminus X_2$ is diffeomorphic to $S^2\times (0,1).$

Moreover, $X_1$ is the union of all closed, diffeomorphic to circle,
isotropic geodesics in $({\rm U}(2),\langle\cdot,\cdot\rangle)$ with the origin $-I;$ physicists call the stratum $X_1$  the {\it conformal infinity of the
Minkowski space} $M_0.$

The real linear span of the union $H\cup {\bf i}H$ is the $8$-dimensional associative algebra $\M(2,\mathbb{C})$ of all complex $(2\times 2)$-matrices.

It is clear that $(\M(2,\mathbb{C}),[\cdot,\cdot])=(\mathfrak{gl}(2,\mathbb{C}),[\cdot,\cdot]).$ 

Algebra $\M(2,\mathbb{C})$ is the {\it Clifford algebra} $Cl_3$ (see~\cite{Loun01}, pp.189, 190), since the orthonormal with respect to scalar product $(\cdot,\cdot)=(-1/4)\langle\cdot,\cdot\rangle$ basis $\sigma_1,\sigma_2,\sigma_3$ of Euclidean space $\mathbb{R}^3$ is a (minimal) generating set for $\M(2,\mathbb{C})$ and $\sigma_l\cdot\sigma_k+\sigma_k\cdot\sigma_l=2\delta_{lk}I.$

It follows that
\begin{equation}
\label{cl} 
u,v\in (\mathbb{R}^3,(\cdot,\cdot))\,\, \Longrightarrow\,\, \{u,v\}:= uv+vu=2 (u,v)I.
\end{equation}
Analogue of (\ref{cl}) is the only condition for the coordinateless definition of the Clifford algebra $Cl_n,$ being an associative algebra
of dimension $2^n,$ $n\geq 1,$ over the field $\mathbb{R},$ generated by Euclidean space $(\mathbb{R}^n,(\cdot,\cdot));$
the matrix $I$ is replaced by the multiplicative unit ${\bf 1}$ of the algebra $Cl_n.$

\begin{example}
The Clifford algebra $Cl_1$ is isomorphic to the subalgebra in $Cl_3,$ generated by any basis vector $\{\sigma_l, l=1,2,3\}$ in $\mathbb{R}^3,$
the Clifford algebra $Cl_2$ is isomorphic to the subalgebra in $Cl_3,$ generated by any pair of basis vectors $\{\sigma_l, l=1,2,3\}$.
In particular, the Clifford algebra $Cl_2$ generated by the matrices $\sigma_1, \sigma_3,$ is the algebra $\M(2,\mathbb{R})$ of real $(2\times 2)$-matrices;
the corresponding Lie algebra $(\M(2,\mathbb{R}),[\cdot,\cdot])=(\mathfrak{gl}(2,\mathbb{R}),[\cdot,\cdot]) $ is the Lie algebra of the Lie group
$\GL(2,\mathbb{R})\subset \GL(2,\mathbb{C})$. For corresponding unimodular Lie subgroups, we get $\SL(2,\mathbb{R})\subset \SL(2,\mathbb{C}).$
\end{example}	

\begin{remark}
\label{negcl}
Under replacement the scalar product $(\cdot,\cdot)$ in (\ref{cl}) with the pseudoscalar product $\langle\cdot,\cdot\rangle=-(\cdot,\cdot),$
$I$ on ${\bf 1}$ and 3 on $n\geq 1,$ we obtain the definition of the Clifford algebra $Cl_{0,n},$ generated by the space $\mathbb{R}^{0,n}$ (see \cite{Loun01}, p.~189), and isomorphisms $Cl_{0,1}\cong \mathbb{C},$ $Cl_{0,2}\cong \mathbb{H},$ $Cl_{0,3}\cong \mathbb{H}\oplus \mathbb{H}.$
\end{remark}
   
The unimodular Lie group $\SL(2,\mathbb{R})$ doubly covers the restricted Lorentz group $\SO_0(1,2)$ in the same way as $\SL(2,\mathbb{C})$ doubly covers
the Lie group $\SO_0(1,3).$ But \linebreak $\pi_1(\SL(2,\mathbb{C}))=0,$ while $\pi_1(\SL(2,\mathbb{R}))\cong(\mathbb{Z},+).$

Moreover $\SL(2,\mathbb{R})/\SO(2)=L^2$ is a Riemannian symmetric space, Lobachevsky plane. 
Therefore, the results of paper \cite{Ber18} are applicable to the corresponding left-invariant sub-Riemannian metric on $\SL(2,\mathbb{R})$.
Earlier in papers \cite{Ber15}, \cite{BerZub16}---\cite{BerZub17}, were studied these metrics, in particular, their geodesics, shortest paths, distances and etc. on the Lie group $\SL(2,\mathbb{R})$ and the Lie groups related to it by coverings.

In papers \cite{AgrBarRiz18} and \cite{Ber17}, \cite{Ber18} were suggested alternative definitions of Ricci and sectional curvatures of homogeneous sub-Riemannian manifolds; the second one is based on Solov'ev approach to definition of curvatures for rigged distributions \cite{Sol84}.

Notice that every nontrivial element of any 1-parameter subgroup from Theorem \ref{ppp} is a {\it boost}
\cite{RumFet77}, \cite{Lap21}.
Part I. ''Spinor algebra'' in \cite{RumFet77} contains the section ''Revolutions and boosts'', in which, for a given (not unique) choice of the time axis in the Minkowski space-time $M_0$, two types of Lorentz transformations are considered. Elements of the group $\SO_0(1,3),$ preserving events of the time axis are called {\it revolutions}, and nontrivial elements of this group that do not change vectors from $M_0,$ orthogonal to some 2-plane in $M_0,$ containing the time axis, are called {\it boosts}. It is clear that the revolutions form a group isomorphic to $\SO(3).$
Let us quote verbatim from \cite{RumFet77}: ''Notice that boosts do not form a group: although for each boost $B$ the inverse transformation is again a boost, the product of two boosts, generally speaking, is not a boost.'' Let us add that boosts corresponding to any one chosen time axis generate the group $\SO_0(1,3)$ since the Lie group $\SL(2,\mathbb{C})$ is generated by elements in the set $\exp(H_0)$ of positive definite Hermitian matrices from $\SL(2,\mathbb{C})$.

The book \cite{BEE96} (its first author is a pupil of H.~Busemann) introduces the concepts of {\it chronological} and {\it causal} 
structures of the time-oriented Lorentzian space-times $(M,g)$ and their classification with respect to these structures.

Let $p,q\in (M,g).$ We will write $p<<q$ (resp., $p\leq q$), if there exists a future-directed timelike (respectively, non-spacelike)
piecewise smooth curve in $(M,g)$ from $p$ to $q$ (it is assumed that $p\leq p$).

{\it The chronological past and future of the point} $p$ are the sets\\ $I^-(p)=\{q\in M\mid q<< p\}$ and $I^+(p)=\{q\in M\mid p<< q\}$ respectively. 

{\it The causal past and future of the point} $p$ are the sets\\ $J^-(p)=\{q\in M\mid q\leq p\}$ and $J^+(p)=\{q\in M\mid p\leq q\}$ respectively.

The sets $I^-(p),$ $I^+(p)$ are always open in any Lorentzian space-time, but in general, $J^-(p),$ $J^+(p)$ neither open nor closed \cite{BEE96}. 

The space--time $(M,g)$ is called {\it strictly causal} if the {\it Alexandrov topology}, whose base consists of the {\it intervals} $I^+(p)\cap I^-(q),$
$p,q\in M$, is Hausdorff. The space--time $(M,g)$ is called {\it globally hyperbolic}, if $(M,g)$ is strictly causal and all sets $J^+(p)\cap J^-(q),$
$p,q\in M$, are compact.

Due to papers \cite{Ger70} and \cite{BerSan03}, the space--time $(M,g)$ is  globally hyperbolic iff  
there exists a diffeomorphism $f:M\leftrightarrow \mathbb{R}\times S$ such that for every $t\in\mathbb{R},$ $f^{-1}(\{t\}\times S)$ is a smooth regular spacelike hypersurface in $(M,g)$ (called the {\it Cauchy surface}).

Thus, from this and Proposition  \ref{iso} it follows that the Lorentzian space--time 
\linebreak $(\GL^+(2,\mathbb{C}),\langle\cdot,\cdot\rangle)$  is globally hyperbolic.

The chronological past and future, the causal past and future, and hence the global hyperbolicity, are defined in exactly the same way for an arbitrary (time-oriented) sub-Lorentzian space-time.

It is clear that for each element $p\in \GL^+(2,\mathbb{C}),$ the causal past $J_d^-(p)$ (the causal future $J_d^+(p)$) of the space $(\GL^+(2,\mathbb{C}),d)$ is strictly included in the set $J^-(p)$ (resp. $J^+(p)$) for $(\GL^+(2,\mathbb{C}),\langle\cdot,\cdot\rangle).$ 
Moreover, it follows from Theorem \ref{slgeod} that the sets $J_d^-(p)$, $J_d^+(p)$ are closed. All this implies that $(\GL^+(2,\mathbb{C}),d)$ is globally hyperbolic.

The following general question naturally arises.
\begin{question}
\label{question}
Is the above statement from \cite{Ger70} and \cite{BerSan03} true for sub-Lorentzian space-time, particularly, for Lie groups with left-invariant sub-Lorentzian (anti)metrics?
\end{question}


\begin{thebibliography}{99}
	
\bibitem{VerGer87}	 
{\sl Vershik~A.M., Gershkovich~V.Ya.} Nonholonomic dynamical systems. Geometry of distributions and variational problems. Itogi Nauki i Tekhn. Ser. Sovrem. probl. mat. Fundam. napravleniya, 1987, vol. 16, 5--85; English transl.: Encyclopaedia of Math. Sci, vol. 16. V.I.Arnol'd, S.P.Novikov (Eds.) Dynamical Systems VII. Integrable Systems. Nonholonomic Dynamical Systems. Springer-Verlag, Berlin, Heidelberg, ..., 1994, 1--82.

\bibitem{Ber90}	 
{\sl Berestovskii~V.N.} Homogeneous spaces with intrinsic metrics, Doctoral Dissertation (in Russian), Inst. Mat. Sib. Otdel. Akad. Nauk SSSR, Novosibirsk, 1990.
269 p.

\bibitem{BerVer92}	 
{\sl Berestovskii~V.N., Vershik~A.M.} Manifolds with intrinsic metrics, and nonholonomic spaces~// Advances in Soviet Mathematics 9(1992), 253-267.

\bibitem{CV36}	 
{\sl Cohn-Vossen~Ste.} Existenz k\"urzester Wege~// Compositio Math. 3(1936), 441--452.


\bibitem{BerGui00}	 
{\sl Berestovskii~V.N., Guijarro~L.} A metric characterization of Riemannian submersions~// Ann. Glob. Anal. Geom. 18 (2000), 577--588.

\bibitem{Led15}	 
{\sl Le Donne~E.} A metric characterization of Carnot groups~// Proc. Amer. Math. Soc. 143:2 (2015), 845--849.

\bibitem{Ur27}
{\sl Urysohn~P.S.} Sur un espace m\'etrique universel~// Bull. Sci. Math. 51 (1927), 1--38.

\bibitem{Ur51}
{\sl Urysohn~P.S.} A universal metric space, P.S.~Urysohn, Works on topology and other fields of mathematics, vol II (P.S.~Aleksandrov, ed.) (Russian), 
GITTL, Moscow-Leningrad, 1951, pp. 747--776.

\bibitem{Ur27a}
{\sl Urysohn~P.S.} Beispiel eines nirgends separablen metrischen Raumes~// Fund. Math. 9 (1927), 119--121.
 	

\bibitem{Ur51a}
{\sl Urysohn~P.S.} An example of a metric space nowhere satisfying the second countability axiom, P.S.~Urysohn, Works on topology and other fields of mathematics, vol II (P.S.~Aleksandrov, ed.) (Russian), GITTL, Moscow-Leningrad, 1951, pp. 778--780.

\bibitem{DP98}	 
{\sl Dyubina~A.G., Polterovich~I.V.} The structure of hyperbolic spaces at infinity~// Russian Math. Surveys 53:5 (1998), 1093--1094.
DOI: https://doi.org/10.1070/rm1998v053n05ABEH000080

\bibitem{Ver98}	 
{\sl Vershik~A.M.} The universal Urysohn space, Gromov metric triples and random matrics on the natural numbers~// Russian Math. Surveys 53:5 (1998), 921--928. 
DOI: https://doi.org/10.1070/rm1998v053n05ABEH000069
 	
\bibitem{Ber19}	 
{\sl Berestovskii~V.N.} On Urysohn's $\mathbb{R}$-tree~//  Siber. Math. J. 60:1 (2019), 10--19. DOI: 10.1134/S0037446619010026

\bibitem{Gro96}	 
{\sl Gromov~M.} Carnot--Carath\'eodory spaces seen from within. Sub-Riemannian geometry/ A.~Bella\"iche, J.-J.~Risler ed.--- Basel; Boston; Berlin: Birkh\"auser, 1996,
79--323.


\bibitem{Semmes01}	 
{\sl Semmes~Ste.} Some novel types of fractal geometry. Clarendon Press. Oxford. Oxford University Press Inc., New York, 2001.

\bibitem{Ver04}	 
{\sl Vershik~A.M.} Random metric spaces and universality~// Russian Math. Surveys 59:2 (2004), 259--295. DOI 10.1070/RM2004v059n02ABEH000718


\bibitem{Ber23} 
{\sl Berestovskii~V.N.} To the Segal Chronometric Theory~// Siber. Adv. in Math. 33:3 (2023), 165--180. https://doi.org/10.1134/S105513442303001X 

\bibitem{Hel}
{\sl Helgason~S.} Differential Geometry, Lie Groups, and Symmetric Spaces. (Graduate Studies in Mathematics; vol. 34). Amer. Math. Soc.,
Providence, 2001.

\bibitem{RumFet77}
{\sl Rumer~Yu.B., Fet~A.I.}  Group Theory and Quantized Fields (Russian). Nauka, Moscow, 1977.
 
\bibitem{PanSeg82}	 
{\sl Paneitz~S.M., Segal~I.E.} Analysis in Space-Time Bundles I. General Considerations and the Scalar Bundle~// J. Funct. Anal. 47 (1982), 78--142.

\bibitem{BerGich99}	 
{\sl Berestovskii~V.N., Gichev~V.M.} Metrized left invariant orders on topological groups~// St. Petersburg Math. J. 11:4 (2000), 543--566.

\bibitem{BerGich01}	
{\sl Berestovskii~V.N., Gichev~V.M.} Metrized semigroups~// Journal of Mathematical Sciences 119:1 (2004), 10--29.

\bibitem{BerZub}
{\sl Berestovskii~V.N., Zubareva~I.A.} PMP, the (co)adjoint representation, and normal geodesics of
left-invariant (sub-)Finsler metrics on Lie groups // Chebyshevskii Sb. (Tula).21:2 (2020), 43--64. DOI 1022405/2226-8383-2020-21-2-43-64

\bibitem{Jur97}	 
{\sl Jurdjevic~V.} Geometric Control Theory. V. 52 of Cambridge Studies in Advanced Mathematics. Cambridge University Press, Cambridge, 1997.

\bibitem{Ber14}	
{\sl Berestovskii~V.N.} Universal methods of the search of normal geodesics on Lie groups with left-invariant 
sub-Riemannian metric~// Siberian Math. J. 55:5 (2014), 783--791.

\bibitem{Pontr}	
{\sl Pontryagin~L.S., Boltyanskii~V.G., Gamkrelidze~R.V., Mishchenko~E.F.} The mathematical
theory of optimal processes. New York-London: Interscience Publishers John Wiley \& Sons,
Inc., 1962.

\bibitem{Gr02}	 
{\sl Grochowski~M.} Geodesics in the sub-Lorentzian Geometry~// Bull. Polish Acad. Sci. Math. 50:2 (2002), 161--178.

\bibitem{Sach23}	 
{\sl Sachkov~Yu.L., Sachkova~E.F.} Sub-Lorentzian distance and spheres on the Heisenberg group~// J. Dyn. Control Syst. 29 (2023), 1129--1159.
https://doi.org/10.1007/s10883-023-09652-2
 

\bibitem{Ber18}	 
{\sl Berestovskii~V.N.} Geodesics and curvatures of special sub-Riemannian metrics on Lie groups~// Siberian Math. J. 59:1 (2018), 31–-42. https://doi.org/10.1134/S0037446618010044

\bibitem{BCHG02}	 
{\sl Boscain~U., Chambrion~T., Gauthier~J.P.} On the $K+P$ problem for a three-level quantum system: Optimality of resonance~// J.Dynam. Control Syst. 8 (2002), 547--572.

\bibitem{BerNik20}
{\sl Berestovskii~V., Nikonorov~Yu.} Riemannian Manifolds and Homogeneous Geodesics. Springer Monographs in Mathematics.
Springer Nature Switzerland AG. Cham, 2020. DOI:10.1007/978-3-030-56658-6 


\bibitem{VinOn88}
{\sl Vinberg~E.V.,  Onishchik~A.L.} Seminar on Lie groups and algebraic groups (in Russian). Nauka, Moscow, 1988.

\bibitem{VinGorbOn90}
{\sl  Vinberg~E.V., Gorbatsevich V.V., Onishchik A.L.} Structure of Lie groups and Lie algebras (in Russian). Itogi Nauki i Tekhniki. Ser. Sovrem. Probl. Mat. Fund. Napr. 41(1990), 5–-253.

\bibitem{Loun01}	 
{\sl Lounesto~P.} Clifford Algebras and Spinors. Second Edition. London Math. Soc. Lecture Note Ser., 286, Cambridge Univ. Press, Oxford, 2001.

\bibitem{Ber15}
{\sl  Berestovskii V.N.} (Locally) shortest arcs of special sub-Riemannian metric on the Lie group $\SO_0(2,1)$~// St. Petersburg Math. J. 27:1 (2016), 1-–14. 

\bibitem{BerZub16} 
{\sl Berestovskii~V.N., Zubareva~I.A.} Geodesics and shortest arcs of a special sub-Riemannian metric
on the Lie group $\SL(2)$ // Siberian Math. J.57:3 (2016), 411--424. https://doi.org/10.1134/S0037446616030046

\bibitem{BerZub161} 
{\sl Berestovskii~V.N., Zubareva~I.A.} Sub-Riemannian distance on the Lie group $\SO_0(2,1)$ // St. Petersburg Math. J. 28:4 (2017), 477--489.

\bibitem{BerZub162} 
{\sl Berestovskii~V.N., Zubareva~I.A.} Locally isometric coverings of the Lie group $SO_0(2,1)$ with special sub-Riemannian metric~// Sb. Math. 207:9 (2016), 1215–-1235. DOI:https://doi.org/10.1070/SM8598
 
\bibitem{BerZub17} 
{\sl Berestovskii~V.N., Zubareva~I.A.} Sub-Riemannian distance on the Lie group $SL(2)$ // Siberian Math. J. 58:1 (2017), 16--27.
https://doi.org/10.1134/S0037446617010037

\bibitem{AgrBarRiz18} 
{\sl Agrachev~A., Barilari~D., Rizzi~L.} The Curvature: a Variational Approach. Memoirs of the Amer. Math. Soc. Vol. 256, Number 1225 (2018), 1--146.
https://doi.org/10.1090/memo/1225

\bibitem{Ber17} 
{\sl Berestovskii V.N.} Curvatures of homogeneous sub-Riemannian manifolds~// Europ. J. Math. 3 (2017), 788--807.
https://doi.org/10.1007/s40879-017-0117-3

\bibitem{Sol84} 
{\sl Solov'ev~A.F.} Curvature of a distribution~// Math. Notes 31:3 (1984), 71--75.

\bibitem{Lap21}
{\sl Lapkovski~A.K.} Relativistic kinematics, non-Euclidean spaces and exponential mapping. Edition~2 (in Russian). URSS, 2021.

\bibitem{BEE96}
{\sl Beem~J.K., Ehrlich~P.E., Easley~K.L.} Global Lorentzian Geometry. Second Edition. Monographs and Textbooks in Pure and Applied Math. 67, New York, Basel, Hong Kong: Marcel Dekker Inc. 1996.

\bibitem{Ger70}
{\sl Geroch~R.} Domains of dependence~// J. Math. Phys. 11(1970), 437--449.

\bibitem{BerSan03}
{\sl Bernal~A.N., S\'anchez~M.} On smooth Cauchy hypersurfaces and Geroch splitting theorem~// Commun. Math. Phys. 243(2003), 461--470.
DOI:10.1007/s00220-003-0982-6. 

\end{thebibliography}
\end{document}